\numberwithin{equation}{section}
\numberwithin{table}{section}
\DeclareSymbolFont{cyrletters}{OT2}{wncyr}{m}{n}
\DeclareMathSymbol{\Sha}{\mathalpha}{cyrletters}{"58}
\DeclareMathSymbol{\Be}{\mathalpha}{cyrletters}{"42}
\theoremstyle{plain}
\newtheorem{thm}{Theorem}[section]
\newtheorem{theo}{Theorem}
\newtheorem{lem}[thm]{Lemma}
\newtheorem{lemma}[thm]{Lemma}
\newtheorem{coro}[thm]{Corollary}
\newtheorem{prop}[thm]{Proposition}
\newtheorem{conj}[thm]{Conjecture}
\theoremstyle{definition} \theoremstyle{definition}
\newtheorem{defn}[thm]{Definition}
\newtheorem{exam}[thm]{Example}
\newtheorem{rem}[thm]{Remark}           
\theoremstyle{remark}
\newcommand{\al}{\alpha}
\newcommand{\C}{\mathbb C}
\newcommand{\Q}{\mathbb Q}
\newcommand{\Z}{\mathbb Z}
\newcommand{\R}{\mathbb R}
\newcommand{\Adele}{\mathbb A}
\newcommand\Gm{\mathbb{G}_\mathrm{m}}
\def\cD{{D}}
\def\cT{{\mathcal T}}
\def\bDelta{{\boldsymbol{\Delta}}}
\def\sK{{\mathsf K}}
\def\y{{\mathbf y}}
\def\G{{\mathsf G}}
\def\H{{\mathsf H}}
\def\T{{\mathsf T}}
\def\S{{\mathsf S}}
\def\B{{\mathsf B}}
\newcommand{\Hom}{{\rm Hom}}
\newcommand{\K}{{\mathsf K}}
\let\Re\relax
\DeclareMathOperator{\Br}{Br}
\DeclareMathOperator{\Re}{Re}
\def\id{{\rm id}}
\def\ra{\rightarrow}
\def\A{{\mathbb A}}
\def\C{{\mathbb C}}
\def\F{{\mathbb F}}
\def\Q{{\mathbb Q}}
\def\Z{{\mathbb Z}}
\def\C{{\mathbb C}}
\def\N{{\mathbb N}}
\DeclareMathOperator{\Pic}{Pic} 
\DeclareMathOperator{\Gal}{Gal} 
\DeclareMathOperator{\Spec}{Spec}
\DeclareMathOperator{\vol}{vol}
\DeclareMathOperator{\Eff}{Eff}
\DeclareMathOperator{\inv}{inv} 
\DeclareMathOperator{\res}{\partial}
\DeclareMathOperator{\sign}{sign}
\def\F{{\Bbb F}}
\def\eps{{\epsilon}}
\def\bfs{{\mathbf s}}
\def\sK{{\mathsf K}}
\newcommand{\sho}[1]{{\color{blue} \sf $\clubsuit\clubsuit\clubsuit$ Sho: [#1]}}
\newcommand{\dan}[1]{{\color{red} \sf $\clubsuit\clubsuit\clubsuit$ Dan: [#1]}}
\author{Dylon Chow}
\address{University of Washington \\ 
Department of Mathematics \\ 
Box 354350, C-138 Padelford* \\
Seattle, WA 98195-4350}
\email{chow.dylon@gmail.com}
\urladdr{https://dchow4.wixsite.com/math}
\author{Daniel Loughran} 
  \address{
  Department of Mathematical Sciences \\
University of Bath \\
Claverton Down \\
Bath \\
BA2 7AY \\
UK}
\urladdr{https://sites.google.com/site/danielloughran}
\author{Ramin Takloo-Bighash}
\address{Department of Mathematics, Statistics, and Computer Science, University of Illinois at Chicago, 851 S. Morgan St (M/C 251), Chicago, IL 60607}
\email{rtakloo@uic.edu}
\urladdr{https://sites.google.com/site/rtakloo}
\author{Sho Tanimoto}
\address{Graduate School of Mathematics, Nagoya University, Furo-cho, Chikusa-ku, Nagoya, 464-8602, Japan}
\email{sho.tanimoto@math.nagoya-u.ac.jp}
\urladdr{https://shotanimoto.wordpress.com/}
\begin{document}

\title{Campana points on wonderful compactifications}

\subjclass[2010]
{14G05, 
11D45. 
}

\begin{abstract}
We prove a variant of Manin's conjecture for Campana points on wonderful compactifications of semi-simple algebraic groups of adjoint type. We use this to provide evidence for a new conjecture on the leading constant in Manin's conjecture for Campana points.
\end{abstract}

\date{\today}

\maketitle

\tableofcontents

\section{Introduction}
\label{sect:intro}

Manin's conjecture concerns an asymptotic formula for the counting function of rational points on a rationally connected smooth projective variety over a number field,  developed in a series of papers \cite{FMT,batyrev-m90,Pey95,batyrev-t98,Pey03,Pey17,LST18}. A fertile testing ground  is the class of homogenous spaces, and one of the prominent methods to study this class of varieties is the height zeta function method, utilised in numerous works \cite{FMT,BT,chambert-t02,STT,TT12,ST16}.
The same method has also been applied to the case of integral points in \cite{C-T-Ga,C-T-Torus,TBT13,Chow19,Santens}. In particular \cite{Santens} proposes a version of log Manin's conjecture for integral points.

In \cite{Cam05,Abra09,Cam15}, Campana and Abramovich introduced the notion of \textit{Campana points} which interpolate between rational points and integral points.
The papers \cite{PSTVA19,BY19} started a systematic study of Manin's conjecture for klt Campana points, and a version of Manin's conjecture for klt Campana points was put forward in \cite{PSTVA19} and proved for equivariant compactifications of vector groups using the height zeta function method. These papers triggered extensive activities in the field, attested by \cite{PS20,Xiao21,Str22,Shute21a,Shute21b,BBKOPW}, and currently a log Manin's conjecture for Campana points is an active area of research in mathematics. In this paper, we add another contribution to this literature and establish log Manin's conjecture for Campana points on wonderful compactifications of semi-simple groups of adjoint type; our approach is based on the height zeta function method for semi-simple groups developed in the series of works \cite{STT,TBT13,LTBT18,Chow19}. 

More than this however, the original conjectural constant put forward in \cite{PSTVA19} was shown to be incorrect in \cite{Str22} and \cite{Shute21b}. We suggest a corrected version of this leading constant (Conjecture \ref{conj:leading_constant}), which we verify to hold in our case. This is given as a (possibly infinite) sum over integrals of local invariants of Brauer group elements with respect to a suitable Tamagawa measure.

\subsection{The main result}\label{sect:mainresult}

We now state our main theorem more precisely. Let $F$ be a number field, $\G$ a semi-simple adjoint group over $F$ of rank $\geq 2$, and $X$ the wonderful compactification of $\G$ (see \cite{concini-p83} for its construction). The boundary divisor $D=X \backslash \G$ is a strict normal crossings divisor on $X$. Let $(D_\alpha)_{\alpha \in \mathcal{A}}$ be its irreducible components defined over $F$.  Let $S$ be a finite set of places of $F$ containing all archimedean places such that there is a good integral model $(\mathcal{X},\mathcal{D})$ for $(X,D)$ over the ring of $S$-integers $\mathcal{O}_{F,S}$ of $F$ in the sense of \cite[Section 3.2]{PSTVA19}. For each $\alpha \in \mathcal{A},$ let $\epsilon_\alpha$ be an element of
\[
\{1-1/m: m \in \mathbb{Z}_{\geq 1}\} \cup \{1\}.
\]
Let $\epsilon=(\epsilon_\alpha)_{\alpha \in \mathcal{A}}$, and set 
\[
D_\epsilon=\sum_{\alpha \in \mathcal{A}} \epsilon_\alpha D_\alpha, \ \ \mathcal{D}_\epsilon=\sum_{\alpha \in \mathcal{A}}\epsilon_\alpha \mathcal{D}_\alpha,
\]
where $\mathcal{D}_\alpha$ is the Zariski closure of $D_\alpha$ in $\mathcal{X}$. The choice of divisor $D$, weight vector $\epsilon$, and integral model $(\mathcal{X},\mathcal{D}_\epsilon)$ determines a set $(\mathcal{X},\mathcal{D}_\epsilon)(\mathcal{O}_{F,S})$ of Campana $\mathcal{O}_{F,S}$-points on $(\mathcal{X},\mathcal{D}_\epsilon)$ (see Definition~\ref{defn:Campana_points} for details).

Let $\mathcal{L}=(L,||.||)$ be a pair consisting of a big line bundle $L$ on $X$ and a smooth adelic metrization on $L$. Let $\H_\mathcal{L}:X(F) \rightarrow \mathbb{R}_{>0}$ be the height function determined by $\mathcal{L}$. 
Let $\G(F)_\epsilon$ be the set of $F$-rational points in $\G$ which extend to Campana $\mathcal{O}_{F,S}$-points on $(\mathcal{X},\mathcal{D}_\epsilon)$, i.e.,
$$\G(F)_\epsilon=\G(F) \cap (\mathcal{X},\mathcal{D}_\epsilon)(\mathcal{O}_{F,S}).$$
We will obtain an asymptotic formula, as $B \rightarrow \infty$, for
\[
\mathsf{N}(\G(F)_\epsilon,\mathcal{L},B) = \# \{    
\gamma \in \G(F)_\epsilon : \mathsf H_\mathcal{L}(\gamma)\le B\}
\]
in terms of the invariants $a=a((X, D_\epsilon), L),$ $b=b(F, (X,D_\epsilon),L),$ and $c=c(F,S,(\mathcal{X},\mathcal{D}_\epsilon),\mathcal{L})$, which are defined in Section~\ref{subsec:logManin} as well as Conjecture~\ref{conj:leading_constant} when $(X, D_\epsilon)$ is klt. The main result of this article is the following:

\begin{theo} 
\label{theo:maintheorem} 
	Assume that the quasi-split inner form of $\G$ has rank $\geq 2$ and $(X, D_\epsilon)$ is klt, i.e., $\epsilon_\alpha < 1$ for any $\alpha \in \mathcal A$.
	Then we have
	\begin{equation}
	\label{eq:asymptotic_conjecture}
	\mathsf{N}(\G(F)_\epsilon,\mathcal{L},B) = c B^a \log(B)^{b-1}(1+o(1)),  \quad B\ra \infty.
	\end{equation}
\end{theo}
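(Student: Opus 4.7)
The plan is to adapt the height zeta function method developed for the wonderful compactification $X$ in \cite{STT,TBT13,LTBT18,Chow19} to the Campana setting of \cite{PSTVA19}. Consider
\[
Z(s) \;=\; \sum_{\gamma \in \G(F)_\epsilon} \mathsf{H}_{\mathcal{L}}(\gamma)^{-s}, \qquad \Re(s) \gg 0.
\]
At each finite place $v \notin S$, the Campana condition amounts to requiring, for every $\alpha \in \mathcal{A}$, that the intersection multiplicity $n_{\alpha,v}(\gamma)$ of $\gamma$ with $\mathcal{D}_\alpha$ is either zero or at least $m_\alpha = (1-\epsilon_\alpha)^{-1}$. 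Hence $Z(s) = \sum_{\gamma \in \G(F)} f_s(\gamma)$, where $f_s = \prod_v \phi_{s,v}$ is an adelic test function on $\G(\A_F)$ whose local components are the local height factor $\mathsf{H}_{\mathcal{L},v}^{-s}$ at $v \in S$, and the local height factor multiplied by the characteristic function of the local Campana condition at $v \notin S$.

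The second step is to invoke the spectral decomposition of $L^2(\G(F)\backslash \G(\A_F))$. Since $\G$ is adjoint and semisimple, there are no nontrivial one-dimensional automorphic characters, and pairing $\sum_{\gamma \in \G(F)} f_s(\gamma)$ against the constant function yields the main term
\[
Z_{\mathrm{triv}}(s) \;=\; \frac{1}{\vol(\G(F)\backslash \G(\A_F))} \prod_v \int_{\G(F_v)} \phi_{s,v}(g)\, dg_v,
\]
with the remainder indexed by cuspidal, residual and Eisenstein spectra. Under the hypothesis that the quasi-split inner form of $\G$ has rank $\geq 2$, the spectral bounds employed in \cite{STT,LTBT18,Chow19}, combined with a uniform estimate absorbing the Campana condition into the local factors at $v \notin S$, should show that the non-trivial spectral contributions admit holomorphic continuation past $\Re(s) = a$ and therefore do not contribute to the leading order.

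The third step is the explicit local analysis of $\int \phi_{s,v}$. At an unramified non-archimedean place $v \notin S$, using the Cartan decomposition on $\G(F_v)$ together with the description of the wonderful compactification via the cocharacter lattice of a maximal split torus, this integral becomes a geometric-series-type sum over the positive Weyl chamber restricted to the Campana cone (cocharacters with each coordinate in $\{0\} \cup \Z_{\geq m_\alpha}$); it evaluates to a rational function in $q_v^{-s}$ whose global Euler product has rightmost pole at $s = a$ of order $b$, matching the invariants defined in Section~\ref{subsec:logManin}. At the archimedean places and at $v \in S$, the corresponding integrals are shown to be holomorphic and nonvanishing in a neighborhood of $\Re(s) = a$. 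A Tauberian theorem of Delange--Ikehara type then yields \eqref{eq:asymptotic_conjecture}; matching the resulting constant $c$ with Conjecture~\ref{conj:leading_constant} requires reorganizing the local factors as local Tamagawa densities orthogonal to the appropriate Brauer classes.

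The main obstacle I anticipate is the treatment of the local Campana factors at places $v \notin S$. Because the Campana condition is not $\G(F_v)$-invariant, $\phi_{s,v}$ is not a matrix coefficient, so bounding the non-trivial spectral contributions cannot proceed by purely representation-theoretic means; one must instead combine the explicit Cartan-decomposition form of $\phi_{s,v}$ with spherical-function estimates, and secure uniform rational-function expressions across all such places. A secondary delicate point is identifying the leading constant with the Brauer-orthogonal limit measure of the revised conjecture, which requires a geometric reinterpretation of the unramified local computations in terms of Tamagawa densities on subsets cut out by Brauer elements.
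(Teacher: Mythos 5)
Your overall skeleton---height zeta function, spectral decomposition, spherical-function bounds for the infinite-dimensional spectrum under the rank $\geq 2$ hypothesis, a Cartan-decomposition computation of the unramified local factors restricted to the Campana cone $\{0\}\cup\Z_{\geq m_\alpha}$, and a Tauberian theorem---is exactly the route the paper takes. But your second step contains a concrete and consequential error: the claim that, since $\G$ is adjoint and semisimple, ``there are no nontrivial one-dimensional automorphic characters,'' so that the main term is just the pairing with the constant function. This is false, and in fact backwards: it is precisely for adjoint (non--simply-connected) groups that strong approximation fails and $\G(F)\backslash\G(\A_F)$ carries nontrivial continuous characters (for $\PGL_n$, for instance, the determinant induces characters of order dividing $n$ through $\A_F^\times/F^\times(\A_F^\times)^n$); the finite group $\mathscr{X}^{W_f}$ of such characters is the subject of Section~\ref{sec:aut_char}. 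The whole of Section~\ref{sec:heightI} is devoted to these characters: for each $\K_0$-invariant $\chi$ the Fourier coefficient $\int_{\G(\A_F)}\delta_\epsilon(g)\mathsf H(\mathbf{s},g)^{-1}\chi(g)\,\mathrm{d}g$ is governed by the partial Hecke $L$-functions $L(m_\alpha(s_\alpha-\kappa_\alpha),\chi_\alpha^{m_\alpha})$ of Theorem~\ref{thm:one-dim}, and every $\chi$ with $\chi_\alpha^{m_\alpha}=1$ for all $\alpha\in\mathcal A(L)$ contributes a pole of the \emph{full} order $b$ at $s=a$. Because of the exponent $m_\alpha$, strictly more characters contribute in the Campana setting than for rational points; this is exactly the phenomenon behind the counterexamples of \cite{Str22} and \cite{Shute21b} to the original conjectural constant, and it is why the constant in Theorem~\ref{thm:leading_constant} is $|\mathscr X_{D_\epsilon,L}(\G)|$ times an integral over the intersection of kernels $\G(\A_F)^{\mathscr X_{D_\epsilon,L}(\G)}$ rather than the plain adelic integral your argument would produce. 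As written, your proof yields the wrong leading constant, and without the character analysis you also cannot establish $c>0$, which in the paper requires restricting to a sub-adelic set on which all contributing characters trivialize.

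A secondary point: your anticipated obstacle that the Campana indicator is ``not $\G(F_v)$-invariant'' and hence awkward to pair against the non-trivial spectrum is resolved by Proposition~\ref{prop:deltaatv}, which shows $\delta_{\epsilon,v}$ is bi-$\K_v$-invariant; together with the Cartan decomposition this is all that is needed to reduce the infinite-dimensional contribution to sums against normalized spherical functions, to which Oh's uniform bounds (Proposition~\ref{prop:spherical}) apply. So that difficulty is real but is handled essentially as you propose; the genuine gap is the omitted nontrivial characters.
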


We expect Theorem \ref{theo:maintheorem} to be true for the rank $1$ case, but we will not include a proof here,  c.f., Remark \ref{rem:rank1}.

\subsection{The leading constant}
We obtain an explicit expression for the leading constant appearing in Theorem \ref{theo:maintheorem}. A conjecture for the leading constant for Campana points was put forward in \cite[Section~3.3]{PSTVA19}. This conjecture was shown to be incorrect independently by Streeter \cite{Str22} and Shute \cite{Shute21b}. The conjectural constant closely mirrored Peyre's constant \cite{Pey95} from Manin's conjecture, and the problematic part concerned the factor $\#\mathrm{H}^1(\Gamma_F, \Pic \bar{U})$ proposed as a replacement for the $\beta$ constant (this part of the constant first appeared in \cite{BT2}).

We suggest a fix (Conjecture \ref{conj:leading_constant}) which is compatible with our expression. The fix is somewhat subtle as it involves taking a sum over elements of a \emph{Campana Brauer group} and taking integrals of the corresponding local invariants. See Theorem \ref{thm:leading_constant} and Section \ref{sec:leadingconstant} for further details.

\subsection{Methodology}
Let us explain an idea of the height zeta function method developed in a series of papers, e.g., \cite{FMT,BT,chambert-t02,STT,TT12,ST16} for rational points, \cite{C-T-Ga,TBT13,Chow19,Santens} for integral points, and \cite{PSTVA19,Str22} for Campana points. Our adelically metrized big line bundle $\mathcal L$ induces an adelic height function
\[
\mathsf H_{\mathcal L} : \G(\A_F) \to \mathbb R_{>0},
\]
where $\A_F$ is the ring of adeles of $F$. We have the notion of adelic Campana set
\[
\G(\A_F)_\epsilon := \G(\A_F)\cap \left(\prod_v(\mathcal{X},\mathcal{D}_\epsilon)(\mathcal{O}_v)\right).
\]
We denote by $\delta_\epsilon : \G(\A_F) \to \mathbb R_{>0}$ the characteristic function of $\G(\A_F)_\epsilon$.
Then we consider the following Dirichlet series
\[
\mathsf Z(s, g) := \sum_{\gamma \in \G(F)} \mathsf H_{\mathcal L}(\gamma g)^{-s}\delta_\epsilon (\gamma g), \quad s \in \C, \quad g \in \G(\A_F).
\]
When $\Re(s)$ is sufficiently large, this Dirichlet series converges to a holomorphic function in $s$ and a continuous and bounded function in $g \in \G(\A_F)$.  Moreover this function is left invariant under $\G(F)$ so that it descends to a function
\[
\mathsf Z(s, g) \in \mathsf L^2(\G(F)\backslash \G(\A_F)).
\]
By a Tauberian theorem, our goal here is to obtain a meromorphic continuation of this function in $s$ and identify the rightmost pole, its order, and the leading constant. To this end we employ the spectral decomposition of $\mathsf L^2(\G(F)\backslash \G(\A_F))$ 
\[
\mathsf L^2(\G(F)\backslash \G(\A_F)) = \widehat{\bigoplus}_\pi \mathsf H_{\pi}
\]
into irreducible representations of Hilbert spaces. Here the r\^{o}le of an orthogonal basis is played by automorphic forms, i.e., cusp forms and Eisenstein series, and this spectral theory has been developed originally in \cite{Arthur1,Arthur3,Arthur2}, and subsequently in \cite{STT,TBT13,LTBT18,Chow19} in the context of height zeta function method for semi-simple groups of adjoint type.

\subsection{Structure of the paper}
In Section~\ref{sect:notions}, we recall basic notation, conventions, and preliminaries. In Section~\ref{sec:Campana}, we recall the notion of wonderful compactifications and Campana points. Then in Section~\ref{sec:heights}, we turn to the notion of height functions and height zeta functions and we discuss their basic properties which are needed in our proofs. Section~\ref{sec:heightI} is devoted to the analysis of one-dimensional representations in the spectral decomposition of the height zeta function and Section~\ref{sec:heightII} is devoted to the analysis of infinite dimensional representations. In Section~\ref{sect:poles}, we analyze the rightmost pole and its order of the height zeta function using estimates established in previous sections. In Section~\ref{sec:leadingconstant}, we formulate our conjectural leading constant for log Manin's conjecture for klt Campana points and verify it in our situation. 

\bigskip

\noindent
{\bf Acknowledgements:}
We are grateful to Tim Santens and Alec Shute for discussions on the leading constant.
We thank Marta Pieropan and Boaz Moerman for comments on this paper.
Daniel Loughran was supported by UKRI Future Leaders Fellowship \texttt{MR/V021362/1}.
Ramin Takloo-Bighash was partially supported by a Collaboration Grant from the Simons Foundation. Sho Tanimoto was partially supported by JST FOREST program Grant number JPMJFR212Z, by JSPS KAKENHI Grand-in-Aid (B) 23K25764, by JSPS Bilateral Joint Research Projects Grant number JPJSBP120219935, by Inamori Foundation, by JSPS KAKENHI Early-Career Scientists Grant number 19K14512, and by MEXT Japan, Leading Initiative for Excellent Young Researchers (LEADER). %

\section{Notation, conventions, and preliminaries}
\label{sect:notions}

In this section, we recall or fix some notation and conventions, often to be used without reference.

\subsection{Number fields} Throughout this paper $F$ is a field of characteristic $0$. When $F$ is a number field, 
$\mathcal{O}_F$ is the ring of integers of $F$, and $\mathbb{A}_F$ is the $F$-algebra of adeles of $F$ endowed with the usual locally compact topology. Let $\Omega_F$ be the set of places of $F$, and $\Omega_F^{<\infty}$ (resp. $\Omega_F^\infty$) the set of all nonarchimedean (resp. archimedean) places. For $v \in \Omega_F$, $F_v$ denotes the completion of $F$ at $v$, and $|.|_v$ the normalized absolute value on $F_v$. (See \cite[Section 2.1]{PSTVA19} for a way to normalize absolute values.) For non-archimedean $v$, $\mathcal{O}_v$ denotes the ring of integers of $F_v$, $\mathfrak{m}_v$ the maximal ideal of $\mathcal{O}_v$, $k_v$ the residue field, $q_v$ the order of $k_v$, and $\varpi_v$ a uniformizer. We write $\A_S$ for $\prod_{v\notin S}' F_v$, and $\A_f$ for $\A_{\Omega^{\infty}_F}$. For any finite set $S \subset \Omega_F$ containing $\Omega_F^\infty$, let $\mathcal{O}_{F,S}$ be the ring of $S$-integers of $F$, i.e., the set of $a \in F$ such that $|a|_v \leq 1$ for all $v \notin S$.
Let $\overline{F}$ be an algebraic closure of $F$ and write $\Gamma_F$ for the absolute Galois group of $F$.

\subsection{Varieties} For an algebraic variety $X$ over a field $F$ we write $X(F)$ for the set of $F$-rational points of $X$. We let $\Pic(X)$ denote the Picard group of $X$ and let $\Eff^1(X)\subset \Pic(X)_{\R}$ denote the 
cone of pseudo-effective divisors on $X$. We often identify line bundles, Cartier divisors and their classes in $\Pic(X)$. 

\subsection{Algebraic groups} \label{sec:algebraic_groups}

See \cite{Milne} for background and details on the structure theory of reductive groups. Let $\G$ be a connected semi-simple group of adjoint type over $F$, and let $\G'$ be the quasi-split inner form of $\G$. The groups $\G$ and $\G'$ have a unique split $F$-form $\G^{sp}$. Let $E$ be a finite Galois extension of $F$ such that $\G(E) = \G'(E) = \G^{sp}(E)$ considered as subgroups of $\G(\bar{F})$. Let $\T$ be a maximal $F$-torus in $\G$. For each place $v$ of $F$, $\T_{F_v}$ is a maximal torus in $\G_{F_v}$ \cite[Thm.~17.82]{Milne}.   For each $v \in \Omega_F$, let $\S_v \subset \G_{F_v}$ be the unique maximal $F_v$-split subtorus of $\T_{F_v}$ considered as an algebraic group over $F_v$ \cite[Prop.~13.2.4]{Springer}. We denote the corresponding objects for $\G'$ and $\G^{sp}$ by $\T'$, $\S_v'$, and $\T^{sp}$, respectively. We make the choice in such a way that $\T^{sp}$ is split.  We also assume $\T(E) = \T'(E) = \T^{sp}(E)$ as subgroups of $\G(\bar F)$. Let $\S'$ be the maximal split torus in $\G'$ contained in $\T'$.

Let $S_E$ with $\Omega_E^\infty \subset S_E \subset \Omega_E$ be a finite set of places such that for $w \not\in S_E$ we have 
\begin{enumerate}
\item for all $v \in \Omega_F$ with $w \mid v$ the extension $E_w / F_v$ is unramified; 
\item if $v \in \Omega_F$ is not divisible by any $w \in S_E$, then $\G$ is quasi-split over $F_v$ and $\G(F_v) = \G'(F_v)$. 
\end{enumerate}
Let $S_F$ be the collection of places of $F$ that are divisible by some place in $S_E$. 

Fix a Borel subgroup $\B^{sp}$ in $\G^{sp}$ containing $\T^{sp}$. Pick a Borel subgroup $\B'$ in $\G'$ containing $\T'$. For $v \not\in S_F$, by transfer of structure we obtain a Borel subgroup $\B_v$ in $\G_{F_v}$. If $v \in S_F$ we just pick a minimal parabolic $F_v$-subgroup $\B_v$ of $\G_{F_v}$ which contains $\S_v$. For each $v \in \Omega_F$, let $\tilde \Phi(\G_{F_v}, \S_v)$ be the set of roots of $\S_v$ in $\G_{F_v}$ and by $\Phi(\G_{F_v}, \S_v)$ the set of non-multipliable roots with the ordering given by $\B_v$. Let $\mathfrak X^*(\S_v)$ be the set of $F_v$-characters of $\S_v$, and let $\mathfrak X^+$, respectively $\Phi^+$, the set of positive characters, respectively roots, in $\mathfrak X^*(\S_v)$, respectively in $\Phi(\G_{F_v}, \S_v)$, with respect to the ordering.  We also let $\Delta(\G^{sp}, \T^{sp})= \{\overline{\alpha}_1, \dots, \overline{\alpha}_l\}$
be the set of simple roots for the pair $(\G^{sp}, \T^{sp})$, and let $\rho$ be half the sum of positive roots, i.e., 
$$
\rho = \frac{1}{2}\sum_{\overline{\alpha} \in \Phi^+(\G^{sp}, \T^{sp})} \overline{\alpha}. 
$$
We also define constants $\kappa_{\overline{\alpha}}$ for $\overline{\alpha} \in \Delta(\G^{sp}, \T^{sp})$ by 
\begin{equation}\label{equ:tworho}
2 \rho = \sum_{\overline{\alpha} \in \Delta(\G^{sp}, \T^{sp})} \kappa_{\overline{\alpha}} \overline{\alpha}. 
\end{equation}
There is a perfect pairing $\langle , \rangle:\mathfrak{X}^*(\S_v) \times \mathfrak{X}_*(\S_v) \rightarrow \mathbb{Z}$ \cite[Lem.~2.11]{Springer} where $\mathfrak X_*(\S_v)$ denotes the set of cocharacters of $\S_v$.  Since by assumption $\G$ is adjoint, the simple roots form a basis of $\mathfrak X^*(\T)$, and so for each simple root $\theta \in \Delta(\G_{F_v},\S_v)$ we may define a cocharacter $\check{\theta}$ by $\langle \alpha,\check{\theta} \rangle=-\delta_{\alpha \theta},$ where $\delta_{\alpha \theta}$ is 1 if $\alpha=\theta$, and is 0 otherwise.

It also makes sense to talk about the root system of the pair $(\G', \T')$. We have 
$$
\Phi(\G', \T') \subset \mathfrak{X}_E^*(\T')= \mathfrak{X}^*(\T').
$$
This root system has an ordering given by the choice of $\B'$, and the ordering determines a set of simple roots $\Delta(\G', \T')$. The Galois group $\mathrm{Gal}(E/F)$ has a natural action on $\mathfrak{X}^*(\T')$, and this action preserves the root system; see \cite[Ch. 24.b]{Milne} for details.. The set $\Delta(\G', \S')$ is a set of representatives for the collection of $\mathrm{Gal}(E/F)$-orbits of $\Delta(\G', \T')$.  We also have a surjective restriction map 
$$
r: \Phi(\G', \T') \to \Phi(\G', \S'). 
$$
The Galois group acts transitively on the fibers of this map. Similarly, for each place $v$ of $F$,  we can talk about the set $\Delta(\G'_{F_v}, \T'_{F_v})$.  We also have a local restriction map 
$$
r_v: \Phi(\G'_{F_v}, \T'_{F_v}) \to \Phi(\G'_{F_v}, \S'_{F_v}). 
$$
For details regarding the restriction maps $r$ and $r_v$ see \cite[2.8-2.9]{STT}.

\subsection{Cartan decomposition}
Let $F$ be a number field and $v$ be a place of $F$.
We set
$$
\widehat{F}_v := \begin{cases}  \quad
 \{ x \in \R \, |\,  x \geq 1 \},  & v \text{ archimedean} ,\\
  \quad  \{ \varpi_v^{-n} \, | \, n \in \N \}, &  v \text{ non-archimedean},
\end{cases}
$$
and
\begin{equation}
\S_v(F_v)^+ = \{ a \in \S_v(F_v) \, | \, \alpha(a) \in \widehat{F}_v
\text{ for each } \alpha \in \Phi^+(\S_v) \}.
\end{equation}
Here the set $\N$ of natural numbers includes $0$.

\begin{prop}[Cartan decomposition]
\label{prop:Cartan}
For each place $v$, there is a maximal compact subgroup $\K_v$ of $\G(F_v)$ and a finite set $\Omega_v \subset \G(F_v)$ such that $\G(F_v)=\K_v \S_v(F_v)^+\Omega_v \K_v.$ More precisely for each $g \in \G(F_v)$, there exist unique elements $a \in \S_v(F_v)^+$ and $d \in \Omega_v$ such that $g \in \K_v a d \K_v.$ If $F_v$ is archimedean or if $\G$ is unramified over $F_v$ (i.e.~quasi-split, and splits over an unramified extension), then $\G(F_v)=\K_v \S_v(F_v)^+ \K_v.$
\end{prop}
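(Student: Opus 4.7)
The plan is to handle the archimedean and non-archimedean cases separately, in each case invoking a known decomposition theorem for reductive groups over local fields and extracting the refined uniqueness statement.

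In the archimedean case, take $\K_v$ to be a maximal compact subgroup of $\G(F_v)$ which is the fixed-point set of a Cartan involution compatible with $\S_v$. The classical $KAK$ (or polar) decomposition for real reductive groups then yields $\G(F_v) = \K_v \S_v(F_v)^+ \K_v$ with $\Omega_v = \{1\}$; this goes back to \'E.~Cartan and is standard (see e.g.\ Helgason or Knapp). Uniqueness of $a \in \S_v(F_v)^+$ follows from the fact that the relative Weyl group $N_\G(\S_v)(F_v)/Z_\G(\S_v)(F_v)$ acts simply transitively on the Weyl chambers of $\S_v(F_v)$, and $\S_v(F_v)^+$ is by definition the closure of the positive chamber determined by $\B_v$.

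In the non-archimedean case, we invoke Bruhat-Tits theory. Let $\mathcal{A}$ be the apartment in the Bruhat-Tits building $\BT(\G, F_v)$ associated to $\S_v$, and fix a special vertex $x \in \mathcal{A}$; take $\K_v$ to be the stabilizer of $x$ in $\G(F_v)$. The relative Bruhat decomposition implies that every element of $\G(F_v)$ lies in a double coset of the form $\K_v n \K_v$ for some $n \in N_\G(\S_v)(F_v)$. The image of $\S_v(F_v)$ in $\K_v\backslash \G(F_v)/\K_v$ is parametrized, up to the relative Weyl group, by dominant cocharacters, hence by $\S_v(F_v)^+$. The finite set $\Omega_v$ accounts for the remaining double cosets coming from the Kottwitz-type component group of $N_\G(\S_v)(F_v)$ modulo the affine Weyl group. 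When $\G$ is unramified over $F_v$, $\K_v$ can be chosen hyperspecial, this obstruction vanishes, and one recovers $\G(F_v) = \K_v \S_v(F_v)^+ \K_v$.

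The principal obstacle is the general non-archimedean case (non-quasi-split or ramified), where one must construct $\Omega_v$ as a finite set of representatives inside $\G(F_v)$ rather than inside $\S_v(F_v)$. The essential input is that the affine Weyl group of the relative root system acts with finite index on the image of $N_\G(\S_v)(F_v)$ in $\mathcal{A}$; finiteness of this index is built into Bruhat-Tits theory and forces $\Omega_v$ to be finite. Uniqueness of the pair $(a, d)$ then follows by restricting to the fundamental domain in $\mathcal{A}$ cut out by the positive chamber determined by $\B_v$ together with the chosen representatives in $\Omega_v$.
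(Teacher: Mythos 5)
Your outline is correct, but be aware that the paper does not actually prove this proposition: its entire proof is the citation ``See \cite[Section 2.1]{oh}'', and Oh in turn quotes the statement from Bruhat--Tits theory (via Silberger's notes). What you have done is reconstruct the argument underlying that reference, and your two ingredients are the right ones: the classical $KAK$ decomposition at archimedean places, and at non-archimedean places the decomposition $\G(F_v)=\K_v N_{\G}(\S_v)(F_v)\K_v$ for $\K_v$ the stabilizer of a special vertex, with $\Omega_v$ absorbing the finite quotient of $Z_{\G}(\S_v)(F_v)$ by the subgroup generated by $\S_v(F_v)$ and the maximal compact subgroup of $Z_{\G}(\S_v)(F_v)$. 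The only place your sketch is thinner than the literature is the simultaneous uniqueness of $a$ and $d$: the $\K_v$-double cosets are indexed by the dominant elements $\Lambda^+$ of $\Lambda:=Z_{\G}(\S_v)(F_v)/(Z_{\G}(\S_v)(F_v)\cap\K_v)$, and one must choose $\Omega_v$ so that $\Lambda^+$ is the \emph{disjoint} union over $d\in\Omega_v$ of $\Lambda_0^+\,d$, where $\Lambda_0^+$ is the image of $\S_v(F_v)^+$. Each coset of $\Lambda_0$ meets $\Lambda^+$ in a shifted copy of the dominant cone, and one needs this to be a genuine $\Lambda_0$-translate of $\Lambda_0^+$; ``restricting to a fundamental domain'' does not deliver this by itself. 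It does hold here because $\G$ is adjoint, so the simple restricted roots form a basis of $\mathfrak X^*(\S_v)$ and $\Lambda_0^+\cong\N^{\Delta(\G_{F_v},\S_v)}$ is a unimodular cone (this is exactly the fact the paper uses later when it says the $\check\theta(\varpi_v)$ form a basis of the semigroup $\S_v(F_v)^+$); you should either make this explicit or simply quote the uniqueness from Oh as the paper does.
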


\begin{proof}
See \cite[Section 2.1]{oh}.
\end{proof}

Let $\K_v$ for each place $v$ be a choice of subgroup as in Proposition \ref{prop:Cartan}.

\subsection{Measures} For each nonarchimedean place $v$ of a number field $F$, let $\mathrm dg_v$ denote the Haar measure on $\G(F_v)$ such that $\K_v$ has volume $1$. For each archimedean place $v$, choose any Haar measure $\mathrm dg_v$. Then the collection $\{\mathrm dg_v:v \in \Omega_F\}$ defines a Haar measure, say $\mathrm{d}g$, on $\G(\mathbb{A}_F)$. Since $\G$ is semi-simple,  the volume of $\G(F) \backslash \G(\mathbb{A}_F)$ is finite with respect to the induced measure \cite[Thm.~5.6 (i)]{borel-ihes}. Therefore, by changing $\mathrm dg_v$ for archimedean $v$ if necessary, we may assume that $\vol(\G(F) \backslash \G(\mathbb{A}_F))=1.$
The measure $\mathrm{d}g$ is uniquely determined by this condition, due to uniqueness of Haar measures up to scalars.

\subsection{Automorphic characters} \label{sec:aut_char}
 Let $F$ be a number field. An automorphic character of $\G(\mathbb{A}_F)$ is a continuous homomorphism from $\G(\mathbb{A}_F)$ to the unit circle $\{z \in \mathbb{C}:z \overline{z}=1\}$ whose kernel contains $\G(F)$. Let $\mathscr{X}$ denote the set of all automorphic characters of $\G(\mathbb{A}_F)$. For any compact open subgroup $W_f$ of $\G(\mathbb{A}_f)$, we let $\mathscr{X}^{W_f} \subset \mathscr{X}$ denote the set of all $W_f$-invariant characters in $\mathscr{X}$, i.e., $\mathscr{X}^{W_f}=\{\chi \in \mathscr{X}:\chi(w)=1 \ \text{for all} \ w \in W_f\}$. We set
\[
G_{W_f}=\text{ker}(\mathscr{X}^{W_f}):= \bigcap_{\chi\in \mathscr{X}^{W_f}} \text{ker}(\chi).
\]
By \cite[Lemma 4.7(1)]{GMO} we have $\# \mathscr{X}^{W_f}=[\G(\mathbb{A}_F):G_{W_f}]<\infty.$ 

In particular, each $\chi \in \mathscr{X}^{W_f}$ has finite order. 
Also, by \cite[Lemma 4.7(3)]{GMO}, for any $g \in \G(\mathbb{A}_F)$ we have
\[
\sum_{\chi \in \mathscr{X}^{W_f}}\chi(g)=
\begin{cases}
	\# \mathscr{X}^{W_f}, & \text{if } g \in G_{W_f}, \\
	0, & \text{if } g \not\in G_{W_f}.
\end{cases}
\]

\subsection{Eisenstein series}

We also need the notion of the Eisenstein series $E(s, \phi).$
For its definition, see \cite[Section 4.3.2]{LTBT18}. We will use the notation established in \cite[Section 4.3.2]{LTBT18} in  Sections \ref{sec:HZF}  and \ref{sec:heightII}, especially Corollary \ref{coro:spectralexpansion}.

\section{Campana points on the wonderful compactification}
\label{sec:Campana}

\subsection{The wonderful compactification.} For any semi-simple algebraic group $\G$ with trivial center, De Concini and Procesi \cite{concini-p83} introduced its wonderful compactification $X$. This is a projective variety equipped with an action of $\G \times \G$ and containing $\G$ as an open orbit (where $\G \times \G$ acts on $\G$ by left and right multiplication). 

\subsubsection{Algebraically closed fields}
We review the construction of the wonderful compactification over an algebraically closed field of characteristic $0$.
The reader can consult \cite{concini-p83} for background and details. Let $\G$ be a semi-simple linear algebraic group of adjoint type. We first introduce some notation: choose a Borel subgroup $\B \subset \G$ as well as a maximal torus $\T \subset \B$. Let $\Phi$ denote the root system of $(\G,\T)$, $\Phi^+ \subset \Phi$ the subset of positive roots consisting of roots of $\B$, $\Delta=\{\alpha_1,\dots,\alpha_l\}$ the corresponding set of simple roots, $\rho$ the half-sum of positive roots, and $W$ the Weyl group.

The coroot of any $\alpha \in \Phi$ is denoted by $\alpha^\vee$; this is a cocharacter of $\T$. The coroots form the dual root system $\Phi^\vee$. The pairing between characters and cocharacters is denoted by $\langle , \rangle$; we have $\langle{\alpha,\alpha^\vee \rangle}=2$ for any $\alpha \in \Phi$.

Let $\Lambda$ denote the weight lattice. For any dominant weight $\lambda$, let $V(\lambda)$ denote the irreducible representation of the simply-connected cover of $\G$ having highest weight $\lambda$ \cite[Thm.~22.2]{Milne}. We then have a projective representation
$$\varphi_\lambda:\G \rightarrow \mathrm{PGL}(V(\lambda)).$$
The closure of the image of $\varphi_\lambda$ in the projective space $\mathbb{P}\textrm{End} V(\lambda)$ will be denoted by $X_\lambda$. This is a projective variety on which $\G \times \G$ acts via its action on $\mathbb{P} \textrm{End} V(\lambda)$ by left and right multiplication. If $\omega_\alpha$ for $\alpha \in \Delta$ denote the fundamental weights, then a dominant weight is any weight of the form $\sum_\alpha n_\alpha \omega_\alpha$ where all $n_\alpha>0$. When the dominant weight $\lambda$ is regular, $X_\lambda$ turns out to be smooth and independent of the choice of $\lambda$; this defines the wonderful compactification $X$ of $\G$. 

The boundary $X \backslash \G$ is a union of $l$ irreducible divisors $D_1,\dots,D_l$ with smooth normal crossings. The homomorphism $\varphi_\lambda$ extends to a $\G \times \G$-equivariant morphism $X \rightarrow X_\lambda$ that we shall still denote by $\varphi_\lambda$. For a dominant weight $\lambda$, the pull-back $\mathcal{L}_X(\lambda):=\varphi_\lambda^* \mathcal{O}_{\mathbb{P}\textrm{End}V(\lambda)}(1)$ is a globally generated line bundle on $X$, and all globally generated line bundles on $X$ are of this form. Moreover, $\mathcal{L}_X(\lambda)$ is ample if and only if $\lambda$ is regular. The assignment $\lambda \mapsto \mathcal{L}_X(\lambda)$ extends to an isomorphism $\Lambda \cong \textrm{Pic}(X)$.

We index the boundary divisors so that $\mathcal{O}_X(D_i)=\mathcal{L}_X(\alpha_i)$ for $i=1,\dots,l$. Under this identification, the boundary divisors generate the root lattice. 

\subsubsection{Non-closed fields}
For the description of the wonderful compactification over a non-algebraically closed field we refer to \cite[Section 5.5]{STT}. We take the notation from Section \ref{sec:algebraic_groups}. Let $\Gamma = \mathrm{Gal}(E/F)$.  Then, the pseudo-effective cone $\Eff^1(X)$ is the positive cone spanned by $D_{\Gamma \cdot \beta}$ for $\beta \in \Delta(\G', \T')$, i.e.,
\[\Eff^1(X)=\bigoplus \mathbb{R}_{\geq 0}[D_{\Gamma \cdot \beta}]
\]
where the sum is taken over the $\Gamma$-orbits $\Gamma \cdot \beta$ in the set $\Delta(\G', \T')$ of simple roots and $D_{\Gamma \cdot \beta}:=\sum_{\beta' \in \Gamma \cdot \beta}D_{\beta'}$.  As a consequence the set $\mathcal A$ in Section \ref{sect:mainresult} can be identified with the set of Galois orbits on $\Delta(\G', \T')$. The anticanonical class is given by 
\begin{equation}
\label{equ:anticanonical}
-K_X = \sum_{\alpha \in \mathcal A} (\kappa_{\alpha}+1)D_{\alpha},
\end{equation}
where $\kappa_\alpha$ is as \eqref{equ:tworho} (see \cite[Prop.~5.2]{STT}).
Note that the set $\Delta(\G^{sp}, \T^{sp})$ corresponds to the set $\overline{\mathcal A}$ of geometrically irreducible components of the boundary divisor $D$. As the anticanonical class is Galois invariant, the equation \eqref{equ:tworho} descends to \eqref{equ:anticanonical}.

\subsection{Campana points}
\label{subsec:Campanapoints}

Here we introduce the notion of Campana points given  by Campana \cite{Cam05} and Abramovich \cite{Abra09}. Let $X$ be a smooth projective variety defined over a number field $F$ with a strict normal crossings divisor $\sum_{\alpha} D_\alpha$. A Campana orbifold is a pair $(X, D_\epsilon)$ such that  $D_\epsilon$ is an effective $\mathbb Q$-divisor on $X$ given as $D_\epsilon = \sum_{\alpha \in \mathcal A} \epsilon_\alpha D_\alpha$ with 
\[
\epsilon_\alpha \in \{1-1/m: m \in \mathbb{Z}_{\geq 1}\} \cup \{1\}.
\] 
A Campana orbifold $(X, D_\epsilon)$ is klt if $\epsilon_\alpha <1$ for any $\alpha \in \mathcal A$.

From now on we assume that $(X, D_\epsilon)$ is a Campana orbifold. Choose a finite set of places $S$ of $F$ containing all archimedean places and a regular projective model $(\mathcal{X},\mathcal{D}_\epsilon)$ for $(X,D_\epsilon)$ over Spec $\mathcal{O}_{F,S}$. Here $\mathcal{D}_\epsilon=\sum_{\alpha \in \mathcal{A}}\epsilon_\alpha \mathcal{D}_\alpha,$ where $\mathcal{D}_\alpha$ is the Zariski closure of $D_\alpha$ in $\mathcal{X}$. 

Let $X^\circ=X \backslash \bigcup_{\alpha \in \mathcal{A}, \epsilon_\alpha \neq 0}D_\alpha$. If $P \in X^\circ(F)$ and $v \notin S$, then there is an induced point $\mathcal{P}_v \in \mathcal{X}(\mathcal{O}_v)$. For each $\alpha \in \mathcal{A}$ such that $\mathcal{P}_v \notin \mathcal{D}_\alpha(\mathcal{O}_v)$, the pullback of $\mathcal{D}_\alpha$ via $\mathcal{P}_v$ defines a nonzero ideal in $\mathcal{O}_v$. We let $n_v(\mathcal{D}_\alpha,P)$ denote the colength of this ideal, i.e.~the intersection multiplicity. If $P \in \mathcal D_\alpha(F)$ we set $n_v(\mathcal{D}_\alpha,P) = + \infty$. If $\epsilon_\alpha=1$ we set $m_\alpha=\infty$; otherwise, we set $m_\alpha=1/(1-\epsilon_\alpha)$.

\begin{defn}[{\cite[Definition 3.4]{PSTVA19}}]
\label{defn:Campana_points}
A point $P \in X(F)$ is called a Campana $\mathcal{O}_{F,S}$-point on $(\mathcal{X},\mathcal{D}_\epsilon)$ if the following hold:

\begin{enumerate}

\item we have $P \in (\mathcal X \setminus \cup_{\epsilon_\alpha = 1}\mathcal D_\alpha)(\mathcal O_{F, S})$, and 

\item for $v \notin S$ and all $\alpha$ such that $\epsilon_\alpha<1$ and $n_v(\mathcal D_\alpha,P)>0$ we have $n_v(\mathcal{D}_\alpha,P) \geq m_\alpha$. 
\end{enumerate}
We denote the set of Campana $\mathcal{O}_{F,S}$-points on $(\mathcal{X},\mathcal{D}_\epsilon)$ by $(\mathcal X, \mathcal D_\epsilon)(\mathcal O_{F, S})$.
\end{defn}

Suppose that $X$ is a wonderful compactification of $\G$ with the boundary divisor $D = \sum_{\alpha \in \mathcal A} D_\alpha$.
For any $v \notin S$, the functions $n_v(\mathcal{D}_\alpha, \cdot )$ extend naturally from $\G(F)$ to $\G(F_v)$. Hence we may define the analogous sets $(\mathcal{X},\mathcal{D}_\epsilon)(\mathcal{O}_v)$. Then we define
\[
\G(F_v)_\epsilon=\G(F_v) \cap (\mathcal{X},\mathcal{D}_\epsilon)(\mathcal{O}_v).
\]
For $v \notin S$, we denote by $\delta_{\epsilon,v}$ the indicator function for $(\mathcal{X},\mathcal{D}_\epsilon)(\mathcal{O}_v) \subseteq X(F_v)$. For $v \in S$, we set $\delta_{\epsilon,v}=1$. Let $\delta_\epsilon=\prod_{v \in \Omega_F} \delta_{\epsilon,v}$.

\subsection{Log Manin's conjecture for klt Campana points}
\label{subsec:logManin}

Here we introduce log Manin's conjecture for klt Campana points, proposed in \cite{PSTVA19}.

\subsubsection{Log Manin's conjecture for Fano orbifolds}

Let $(X, D_\epsilon = \sum_{\alpha \in \mathcal A} \epsilon_\alpha D_\alpha)$ be a Campana orbifold defined over a number field $F$.
We say it is a Fano orbifold if $-(K_X + D_\epsilon)$ is ample. 

From now on we assume that $(X, D_\epsilon)$ is a klt Fano orbifold. It follows from \cite[Corollary 1.3.2]{BCHM} that $X$ is a Mori dream space. In particular, the cone of effective divisors $\Eff(X)$ is rationally finitely generated. Let $L$ be a big $\mathbb Q$-divisor on $X$. We define two birational invariants $a((X, D_\epsilon), L), b(F, (X, D_\epsilon), L)$ in the following way:
\begin{equation}
\label{equ:a-invariant}
a((X, D_\epsilon), L) = \min\{t \in \mathbb R \, | \, t[L] + [K_X + D_\epsilon] \in \Eff^1(X)\},
\end{equation}
\begin{equation}
\begin{aligned}
\label{equ:b-invariant}
b(F, (X, D_\epsilon), L) = &\text{ the codimension of the minimal face of } \Eff^1(X)\\
&\text{containing  }a((X, D_\epsilon), L)[L] + [K_X + D_\epsilon].
\end{aligned}
\end{equation}
These are birational invariants (see \cite[Section 3.6]{PSTVA19}.)

Let $S$ be a finite set of places for $F$ including $\Omega_{F}^\infty$. We fix a regular projective model $(\mathcal X, \mathcal D_\epsilon)$ over $\Spec \mathcal O_{F, S}$ of $(X, D_\epsilon)$ as Section~\ref{subsec:Campanapoints}. Let $\mathcal L$ be an adelically metrized big and nef $\mathbb Q$-divisor on $X$. (See Definition~\ref{defn:adelicmetric} for the definition of adelic metrics.) An adelically metrized divisor defines a height function $\mathsf H_{\mathcal L} : X(F) \to \mathbb R_{\geq 0}$. For any subset $Q \subset X(F)$, we define the counting function:
\begin{align*}
N(Q, \mathcal L, T) := \#\{ x \in Q \, | \, \mathsf H_{\mathcal L}(x) \leq T\}.
\end{align*}
Regarding this counting function, we have
\begin{conj}[Log Manin's conjecture, {\cite[Conjecture 1.1]{PSTVA19}}]
\label{conj:logManin}
Assume that the set $(\mathcal X, \mathcal D_\epsilon)(\mathcal O_{F, S})$ is not thin in the sense of \cite[Section 3.4]{PSTVA19}. Then there exist a thin set $Z \subset (\mathcal X, \mathcal D_\epsilon)(\mathcal O_{F, S})$ and a positive constant $c >0$ such that
\begin{equation}
\label{equ:asymptotics}
N((\mathcal X, \mathcal D_\epsilon)(\mathcal O_{F, S})\setminus Z, \mathcal L, T) \sim c T^{a((X, D_\epsilon), L)}(\log T)^{b(F, (X, D_\epsilon), L)-1}
\end{equation}
as $T \to \infty$.
\end{conj}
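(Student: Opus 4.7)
The plan is to follow the height zeta function method of \cite{STT,TBT13,LTBT18,Chow19}, adapted to Campana points as in \cite{PSTVA19,Str22}. The first step is to introduce, for $\Re(s)$ sufficiently large, the height zeta function
\[
\mathsf Z(s,g) = \sum_{\gamma \in \G(F)} \mathsf H_{\mathcal L}(\gamma g)^{-s}\delta_\epsilon(\gamma g),
\]
show that it converges to a bounded continuous function on $\G(F)\backslash \G(\A_F)$, and in particular lies in $\mathsf L^2(\G(F)\backslash \G(\A_F))$. The desired asymptotic for $\mathsf N(\G(F)_\epsilon,\mathcal L,B)$ then reduces, via a Tauberian theorem, to locating the rightmost pole of $\mathsf Z(s,e)$, determining its order, and computing the leading coefficient. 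The klt hypothesis $\epsilon_\alpha<1$ ensures that $(X,D_\epsilon)$ is a klt Fano orbifold, so that the expected exponents $a = a((X,D_\epsilon),L)$ and $b = b(F,(X,D_\epsilon),L)$ are well-defined and governed by the facial structure of $\Eff^1(X)$ at $a[L]+[K_X+D_\epsilon]$.

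The second step is to decompose $\mathsf Z(s,\cdot)$ according to the spectral decomposition
\[
\mathsf L^2(\G(F)\backslash \G(\A_F)) = \widehat{\bigoplus}_\pi \mathsf H_\pi,
\]
splitting the sum into the one-dimensional part (automorphic characters, handled in Section~\ref{sec:heightI}) and the infinite-dimensional part (cuspidal representations and Eisenstein series on proper parabolics, handled in Section~\ref{sec:heightII}). For a given compact open subgroup $W_f$ fixing the integrand and the Campana conditions, the characters in $\mathscr X^{W_f}$ are finite in number by the result recalled in Section~\ref{sec:aut_char}, and the contribution of each such $\chi$ is an Euler product of local integrals
\[
\int_{\G(F_v)} \mathsf H_{\mathcal L,v}(g_v)^{-s} \delta_{\epsilon,v}(g_v)\,\chi_v(g_v)\,\mathrm d g_v.
\]
Using the Cartan decomposition of Proposition~\ref{prop:Cartan} together with the description $-K_X = \sum_\alpha(\kappa_\alpha+1)D_\alpha$, each local integral at a good place reduces to a sum over $\S_v(F_v)^+$ of monomials in $q_v^{-\langle\cdot,\cdot\rangle}$ weighted by Plancherel-type densities; the Campana conditions cut this sum along $m_\alpha\mathbb Z$ in the directions $\check{\alpha}$, which is the critical modification compared to the classical Manin setup. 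An explicit regularisation of these Euler products against the partial Dedekind-type zeta functions associated with the Galois orbits in $\Delta(\G',\T')$ isolates a pole at $s=a$ of the expected order $b$ coming purely from the trivial character, and shows that all non-trivial characters contribute poles of strictly lower order (or none at all) on the critical line, via vanishing of $L$-functions twisted by non-trivial finite-order characters.

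The third step is to control the infinite-dimensional contribution. For cuspidal representations, rapid decay of cusp forms combined with standard bounds on the matrix coefficients of $\mathsf H_{\mathcal L,v}^{-s}\delta_{\epsilon,v}$ will give holomorphy in a half-plane strictly to the left of $\Re(s)=a$. For Eisenstein series on proper parabolics, following \cite[Section 4.3.2]{LTBT18} one expresses the contribution as a contour integral of an Eisenstein series $E(s,\phi)$ paired with the height zeta integral; shifting contours and carefully bookkeeping the poles coming from the intertwining operators (whose singularities lie on hyperplanes $\langle s,\check\alpha\rangle =$ const.) reveals that no pole of order $\geq b$ arises on the critical line $\Re(s)=a$ from the Eisenstein side, so only the trivial-character term survives to leading order. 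This part of the argument is the main technical obstacle: one must check that the Campana cutoff $\delta_\epsilon$ does not introduce extra residues through its Fourier-type expansion along $\S_v$, and that the sum over the spectrum converges uniformly in a punctured neighbourhood of $s=a$. Once these bounds are in place, combining the three parts and applying a standard Tauberian theorem yields the asymptotic \eqref{eq:asymptotic_conjecture} with an explicit leading constant $c$ expressed as a product of local densities times an arithmetic factor, which in Section~\ref{sec:leadingconstant} we will match with the conjectural value of Conjecture~\ref{conj:leading_constant}.
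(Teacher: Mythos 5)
The statement you are trying to prove is a \emph{conjecture} (log Manin's conjecture for klt Campana points, quoted from \cite{PSTVA19}), stated for an arbitrary klt Fano orbifold $(X,D_\epsilon)$ over a number field. The paper does not prove it in this generality, and your proposal cannot either: the entire strategy you outline --- forming $\mathsf Z(s,g)=\sum_{\gamma\in\G(F)}\mathsf H_{\mathcal L}(\gamma g)^{-s}\delta_\epsilon(\gamma g)$, viewing it in $\mathsf L^2(\G(F)\backslash\G(\A_F))$, and applying the spectral decomposition into automorphic representations --- presupposes that $X$ contains a semi-simple group $\G$ as a dense open orbit. A general Fano orbifold carries no such group action, so there is no adele group, no automorphic spectrum, and no Cartan decomposition to exploit. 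What you have written is, in substance, an outline of the paper's proof of Theorem~\ref{theo:maintheorem} (equivalently Theorem~\ref{thm:leading_constant}), which establishes the conjecture only for wonderful compactifications of adjoint semi-simple groups; it is not a proof of Conjecture~\ref{conj:logManin}.

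Even restricted to that special case, two points in your third step need correction. First, the paper's control of the infinite-dimensional spectrum does not come from ``rapid decay of cusp forms'' or from contour shifts past poles of intertwining operators; the key input is the uniform bound on normalized spherical functions $|\varphi_{\pi_v}(g_v)|\le C_\epsilon\prod_\theta|\theta(t_v)|_v^{-1/3n+\epsilon}$ (Proposition~\ref{prop:spherical}, due to Oh), which is uniform in $\pi$ and is what makes the sum over the whole non-one-dimensional spectrum converge in a half-plane strictly left of $\Re(s)=a$ (Theorem~\ref{non-triv}, Theorem~\ref{important-theorem}). This bound fails for quasi-split groups of rank $1$, which is exactly why the paper assumes the quasi-split inner form has rank $\ge 2$ and why the rank-$1$ case is only sketched in Remark~\ref{rem:rank1}; your proposal omits this hypothesis entirely. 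Second, your claim that all non-trivial characters contribute poles of strictly lower order is not quite what happens: the characters $\chi$ with $\chi_\alpha^{m_\alpha}=1$ for all $\alpha\in\mathcal A(L)$ (the set $\mathscr X_{D_\epsilon,L}(\G)$ of \eqref{eqn:def_X(G)}) \emph{do} contribute to the leading pole of order $b$, and this is precisely the source of the corrected leading constant and the Campana Brauer group discussion in Sections~\ref{sect:poles} and~\ref{sec:leadingconstant}. Showing the resulting leading coefficient is nonzero is itself nontrivial (the restriction to a sub-adelic space trivialising the characters, following Streeter), and your outline does not address it.
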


The interpretation of $c$ as a certain adelic volume is given in \cite[Section 3.3]{PSTVA19} which is inspired by the leading constant in Manin's conjecture for rational points introduced in \cite{Pey95} and \cite{batyrev-t98}. However, this formulation admits a counter example in \cite{Str22} and \cite{Shute21b}. We will propose a refinement of this constant in Section~\ref{sec:leadingconstant}.

\subsubsection{The case of wonderful compactifications}
\label{subsubsec:wonderful}
Let $X$ be a wonderful compactification of a semi-simple algebraic group $\G$ of adjoint type over a number field $F$ with boundary divisor $D = \sum_{\alpha \in \mathcal A} D_\alpha$. Let $(X, D_\epsilon)$ be a klt Campana orbifold.
In the view of (\ref{equ:anticanonical}), our log anticanonical class $-(K_X + D_\epsilon)$ is always big, but not necessary ample. Our theorem holds with this wider setting. Since the cone of effective divisors $\Eff^1(X)$ is a simplicial cone generated by boundary divisors, our definitions $(\ref{equ:a-invariant}), (\ref{equ:b-invariant})$ still apply to arbitrary big $\mathbb Q$-divisors $L$ on $X$. Let $L = \sum_{\alpha \in \mathcal A}\lambda_\alpha D_\alpha$ be a big $\mathbb Q$-divisor with $\lambda_\alpha >0$. Then our invariants $a((X, D_\epsilon), L)$ and $b(F, (X, D_\epsilon), L)$ are given by the following formula:
\begin{align*}
&a((X, D_\epsilon), L) = \max\{ (\kappa_\alpha + 1-\epsilon_\alpha)/\lambda_\alpha \, | \, \alpha \in \mathcal A\} \\
&b(F, (X, D_\epsilon), L) = \# \{ \alpha \in \mathcal A \, | \, a((X, D_\epsilon), L)  = (\kappa_\alpha + 1-\epsilon_\alpha)/\lambda_\alpha \}.
\end{align*}
We will establish Conjecture~\ref{conj:logManin}, especially (\ref{equ:asymptotics}), in this setting.

\section{Height functions}
\label{sec:heights}

In this section we introduce the notion of height functions and the height zeta function for the wonderful compactification $X$.

\subsection{Height functions.} 
We recall various facts about adelically metrized line bundles and height functions, which can be found, for example, in \cite[Sections 2.1, 2.2]{C-T-integral}.
\begin{defn}[$v$-adic metrics]
Let $(X,L)$ be a variety over a number field $F$ together with a line bundle $L$ on $X$. For a place $v \in \Omega_F$, a $v$-adic metric on $L$ is a map which assigns to every point $x_v \in X(F_v)$ a function $||.||_v:L(x_v) \rightarrow \mathbb{R}_{\geq 0}$ on the fiber of $L$ at $x_v$ such that
\begin{enumerate}
\item For all $l \in L(x_v)$, we have $||l||_v=0$ if and only if $l=0$.
\item For all $\lambda \in F_v$ and $l \in L_{x_v}(F_v)$, we have $||\lambda l||_v=|\lambda|_v||l||_v$.
\item For any open subset $U \subset X$ and any local section $s \in \Gamma(U,L)$, the function on $U(F_v)$ given by $x_v \mapsto ||s(x_v)||_v$ is continuous in the $v$-adic topology.
\end{enumerate}
We say a $v$-adic metric on $L$ is smooth if for any open subset $U \subset X$ and any nowhere vanishing section $s \in \Gamma(U, L)$, the function on $U(F_v)$ given by $x_v \mapsto ||s(x_v)||_v$ is $C^\infty$ if $v$ is archimedean and is locally constant if $v$ is non-archimedean.

For any non-archimedean place $v$ of $F$, there is a natural way to associate a smooth $v$-adic metric on $L$ with any integral model $(\mathcal{X},\mathcal{L})$ of $(X,L)$ over $\Spec \, \mathcal{O}_v$. See \cite[Section 2.1.5]{C-T-integral} for more details.

\end{defn}

\begin{defn}[(Smooth) adelic metrics]
\label{defn:adelicmetric}
Let $(X,L)$ be a projective variety over a number field $F$ together with a line bundle $L$ on $X$. A (smooth) adelic metric on $L$ is a collection $||.||=\{||.||_v\}_{v \in \Omega_F}$ of (smooth) $v$-adic metrics for each place $v \in \Omega_F$, such that all but finitely many of the $||.||_v$ are defined by a single projective model of $(X,L)$ over $\mathcal{O}_{F, S}$ for some $S$ a finite set of $\Omega_F$ including all archimedean places. 

\end{defn}

\label{sect:heights}
\subsection{Height functions on the wonderful compactification}

Let us come back to the original situation, i.e., $X$ is the wonderful compactification of a connected semi-simple group $\G$ of adjoint type defined over a number field $F$.
The boundary $X \backslash \G$ decomposes into a union of irreducible divisors:
\[
D=X \backslash \G = \bigcup_{\alpha \in \mathcal{A}}D_\alpha.
\]
For each $\alpha \in \mathcal{A}$, we fix a smooth adelic metric on the line bundle $\mathcal{O}(D_\alpha)$.
This induces an adelic metrization for any line bundle $L$. Let $\mathsf{f}_\alpha$ be a global section of $\mathcal{O}_X(D_\alpha)$ corresponding to a boundary divisor $D_\alpha$. 
For each place $v$, we define the local height pairing by
\[
\mathsf H_v: \textrm{Pic}(X)_\mathbb{C} \times \G(F_v) \rightarrow \mathbb{C}^\times, \ \left(\sum_{\alpha \in \mathcal{A}}s_\alpha D_\alpha, x \right) \mapsto \prod_{\alpha \in \mathcal{A}}||\mathsf{f}_\alpha(x)||_v^{-s_\alpha}.
\]
This pairing varies linearly on the factor Pic$(X)_\mathbb{C}$ and continuously on the factor $\G(F_v)$. We define the global height pairing $\H$ as the product of the local height pairings
\[
\mathsf H=\prod_{v \in \Omega_F} \mathsf H_v: \textrm{Pic}(X)_\mathbb{C} \times \G(\mathbb{A}_F) \rightarrow \mathbb{C}^\times.
\]
Again, this pairing varies linearly on the first factor and continuously on the second factor.

From now on we will use notations introduced in Section~\ref{sec:algebraic_groups}.
For all but finitely many places $v$ of $F$, the group $\G_{F_v}$ is quasi-split \cite[Thm.~6.7]{pr}. Let $v$ be a place of $F$ for which $\G_{F_v}$ is quasi-split.  Then each tuple $\bold{s} = (s_\alpha)_{\alpha \in \mathcal A}$ gives rise to a tuple $\bold{s} = (s^v_\theta)_{\theta \in \Delta(\G_{F_v}, \S_{v})}$. Indeed, when $\theta$ is above $\alpha$, we define as $s^v_\theta = s_\alpha$. When there is no danger of confusion we simply write $\mathbf{s} = (s_\theta)_{\theta \in \Delta(\G_{F_v}, \S_{v})}$. We use these complex variables to perform our local computations without comment.

\begin{lem}
\label{lem:height_k-invariant}
For each nonarchimedean place $v$ of $F$, there is a compact open subgroup $\K_v \subset \G(F_v)$ such that $\mathsf H_v(\bold{s},.)$ is bi-$\K_v$-invariant for all $\bold{s}$. Moreover, for all non-archimedean $v$, one can choose $\K_v$ to satisfy the Cartan decomposition (as in Proposition~\ref{prop:Cartan}), and for almost all $v$ one can choose  $\K_v = \G(\mathcal O_v)$.
\end{lem}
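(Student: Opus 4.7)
The plan is to combine three facts: smoothness of the $v$-adic metric (locally constant at non-archimedean $v$), joint continuity of the $(\G\times\G)$-action on $X$, and compactness of $X(F_v)$.

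For the first assertion, fix a non-archimedean place $v$ and, for each $\alpha\in\mathcal A$, consider the continuous function $\phi_\alpha\colon X(F_v)\to\mathbb R_{\ge 0}$ defined by $\phi_\alpha(x)=\|\mathsf f_\alpha(x)\|_v$. Smoothness forces $\phi_\alpha$ to be locally constant, so every $x_0\in X(F_v)$ has an open neighborhood $U_{x_0}$ on which $\phi_\alpha$ is constant. Joint continuity of the action map $\G\times\G\times X\to X$ then yields open neighborhoods $V_{x_0}\subset U_{x_0}$ of $x_0$ and $W_{x_0}$ of $(e,e)$ in $\G(F_v)\times\G(F_v)$ with $W_{x_0}\cdot V_{x_0}\subset U_{x_0}$. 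Covering the compact space $X(F_v)$ by finitely many $V_{x_i}$ and intersecting the corresponding $W_{x_i}$ produces an open neighborhood of $(e,e)$, which---because $\G(F_v)$ admits a neighborhood basis at the identity consisting of compact open subgroups---contains some $\K_v^{(\alpha)}\times\K_v^{(\alpha)}$ with $\K_v^{(\alpha)}\subset\G(F_v)$ compact open. Then $\phi_\alpha$ is bi-$\K_v^{(\alpha)}$-invariant on $X(F_v)$, and intersecting over the finitely many $\alpha\in\mathcal A$ yields a single compact open $\K_v$ for which every $\mathsf H_v(\bold s,\cdot)=\prod_\alpha \phi_\alpha^{-s_\alpha}$ is bi-$\K_v$-invariant.

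For almost all $v$, we strengthen this using an integral model. Enlarge $S$ so that for $v\notin S$, $\G$ extends to a smooth reductive group scheme $\mathcal G$ over $\mathcal O_v$ acting on a smooth projective $\mathcal O_v$-model $\mathcal X$ of $X$, with each $D_\alpha$ extending to a relative Cartier divisor $\mathcal D_\alpha$ whose defining integral section is $\mathsf f_\alpha$ and which induces the given $v$-adic metric. The one-dimensional subspace $\mathbb C\mathsf f_\alpha\subset H^0(X,\mathcal O(D_\alpha))$ is stable under $\G\times\G$ (since $D_\alpha$ is) and hence transforms by a character of $\G\times\G$; semisimplicity forces this character to be trivial, so $\mathsf f_\alpha$ is $(\G\times\G)$-equivariantly fixed and the integrally defined norm is preserved by $\mathcal G(\mathcal O_v)\times\mathcal G(\mathcal O_v)$. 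Thus $\K_v=\G(\mathcal O_v)$ works, and at such $v$ it is hyperspecial maximal compact satisfying Cartan by Proposition~\ref{prop:Cartan}.

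For the finitely many bad $v\in S$, the compact open $\K_v$ produced above is generally not maximal, so Cartan may fail. However, every compact subgroup of $\G(F_v)$ embeds into a maximal compact $\K_v^{\max}$ satisfying Proposition~\ref{prop:Cartan}. If the given smooth metric is not already bi-$\K_v^{\max}$-invariant, we replace it at these finitely many places by its average over $\K_v^{\max}\times\K_v^{\max}$: local constancy of $\phi_\alpha$ together with compactness of $X(F_v)$ ensures that $\phi_\alpha$ takes only finitely many values and the averaged function is again locally constant, hence the averaged metric is still a smooth $v$-adic metric; the adelic metric is thereby perturbed at only finitely many places, changing heights by bounded multiplicative factors that do not affect the asymptotic. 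The chief technical hurdle is precisely this last point: verifying that the averaged metric remains smooth in the sense of Definition~\ref{defn:adelicmetric} and that the modification is genuinely harmless for the later spectral analysis. A cleaner alternative would invoke Bruhat--Tits theory to exhibit, at each bad place, a maximal compact subgroup that preserves the integral structure defining $\phi_\alpha$ from the outset, bypassing any averaging.
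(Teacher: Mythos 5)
The paper itself disposes of this lemma by citing \cite[Prop.~6.3]{STT}, so a self-contained argument is welcome; unfortunately yours has a genuine gap at its central step. You claim that smoothness of the metric forces $\phi_\alpha(x)=\|\mathsf f_\alpha(x)\|_v$ to be locally constant on all of $X(F_v)$, and you then cover the compact space $X(F_v)$ by finitely many sets on which $\phi_\alpha$ is constant. But the smoothness condition in the paper only asserts local constancy of $x\mapsto\|s(x)\|_v$ for \emph{nowhere vanishing} sections $s$ on opens $U$, and $\mathsf f_\alpha$ vanishes precisely along $D_\alpha$. Near a point $x_0\in D_\alpha(F_v)$ one has, in a local trivialization, $\phi_\alpha=|f|_v\cdot\|s\|_v$ with $f$ a local equation for $D_\alpha$; since $D_\alpha(F_v)$ has empty interior in $X(F_v)$, the factor $|f|_v$ takes infinitely many values accumulating at $0$ on every neighborhood of $x_0$, so $\phi_\alpha$ is \emph{not} locally constant there. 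Your finite cover therefore does not exist, and the argument fails exactly on the boundary, which is where compactness was supposed to do the work (on $\G(F_v)$ alone, where $\phi_\alpha$ really is locally constant, there is no compactness). The repair is the standard one, and it uses the equivariance you only invoke in your second paragraph: since $\mathsf f_\alpha$ spans a $\G\times\G$-stable line and $\G\times\G$ has no nontrivial characters, the ratio $\Phi_\alpha(g_1,g_2,x)=\|\mathsf f_\alpha(g_1xg_2)\|_v/\|\mathsf f_\alpha(x)\|_v$ (computed via the linearization) extends to a nowhere-vanishing \emph{locally constant} function on $\G(F_v)\times\G(F_v)\times X(F_v)$ with $\Phi_\alpha(e,e,\cdot)\equiv 1$; now compactness of $X(F_v)$ legitimately produces a compact open $\K_v$ with $\Phi_\alpha\equiv 1$ on $\K_v\times\K_v\times X(F_v)$. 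Your second paragraph (the integral-model argument for almost all $v$) is essentially correct.

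Your third paragraph is also not acceptable as written: averaging the metric over $\K_v^{\max}\times\K_v^{\max}$ at the bad places replaces the given height by a different one. This is not a harmless perturbation here, because the paper computes the \emph{leading constant} $c$, which is not invariant under bounded multiplicative changes of the height; the lemma must be proved for the metric that was fixed. Note also that the paper does not actually need bi-invariance under a Cartan-compatible maximal compact at the finitely many bad places: in Proposition \ref{ramified} the local height there is written as $c(\mathbf s,\cdot)\mathsf H_{0,v}$ with $\mathsf H_{0,v}$ a reference height invariant under a good maximal compact and $c$ locally constant, so only invariance under \emph{some} compact open subgroup is used at those places. If you want the literal ``Cartan'' clause, the honest route is the one you gesture at in your last sentence (arranging the reference integral/Bruhat--Tits structure from the outset), not modifying the metric.
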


\begin{proof}
This property of the height is shown in \cite[Proposition 6.3]{STT}. 
\end{proof}

Let $v$ be a place such that $\G(F_v)=\K_v \S_v^+ \K_v$, as in the Cartan decomposition (Proposition \ref{prop:Cartan}), and let $g_v \in \G(F_v)$. Write $g_v=k_v t_v k'_v$ with $t_v \in \S_v(F_v)^+$ and $k_v,k'_v \in \K_v$. For each $\theta \in \Delta(\G_{F_v},\S_v)$, let 
\[
l_v(\theta)=\#\{\beta \in \Delta(\G'_{F_v} ,\T'_{F_v}) \ \text{with} \ \beta|_{\S_v}=\theta\}.
\]

\begin{lem}
\label{lem:heightatv}
For almost all places $v$, 
\[
\mathsf H_v(\bold{s},g_v)=\prod_{\theta \in \Delta(\G_{F_v},\S_v)}|\theta(t_v)|_v^{l_v(\theta) s_\theta}.
\]
\end{lem}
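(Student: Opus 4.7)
The plan is to use bi-$\K_v$-invariance to reduce to the torus $\S_v(F_v)^+$ and then compute directly from the local toric structure of the wonderful compactification. At almost all $v$, Lemma~\ref{lem:height_k-invariant} furnishes a maximal compact subgroup $\K_v$ such that $\mathsf H_v(\mathbf s, \cdot)$ is bi-$\K_v$-invariant and the metric defining $\mathsf H_v$ is induced by a common integral model, while Proposition~\ref{prop:Cartan} supplies the decomposition $\G(F_v) = \K_v \S_v(F_v)^+ \K_v$. Writing $g_v = k_v t_v k_v'$, the problem reduces to evaluating $\mathsf H_v(\mathbf s, t_v) = \prod_{\alpha \in \mathcal A} \|\mathsf f_\alpha(t_v)\|_v^{-s_\alpha}$ for $t_v \in \S_v(F_v)^+$.

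Next I would factor the computation through the absolutely irreducible boundary components. Over the splitting field $E_w$ of $\T$ at $v$, the $F$-irreducible divisor $D_\alpha$ decomposes as $D_\alpha = \sum_{\beta \in \alpha_{F_v}} D_\beta$, where $\alpha_{F_v} \subset \Delta(\G'_{F_v}, \T'_{F_v})$ is the $\Gal(E_w/F_v)$-stable set of absolute simple roots lying over $\alpha$, so $\|\mathsf f_\alpha(t_v)\|_v = \prod_{\beta \in \alpha_{F_v}} \|\mathsf f_\beta(t_v)\|_v$ up to a descent unit of $v$-adic norm one at almost all $v$. The key computation is then to identify $\|\mathsf f_\beta(t_v)\|_v = |\beta(t_v)|_v^{-1}$ for $t_v \in \S_v(F_v)^+$ and every simple root $\beta \in \Delta(\G^{sp}, \T^{sp})$. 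This rests on the classical local structure of the wonderful compactification: the closure $\overline{\T}$ of $\T$ in $X$ is, over $\overline F$, the smooth toric variety attached to the Weyl chamber fan, and an affine chart around the unique $\T^{sp}$-fixed point in the positive chamber has coordinates $y_\beta$ corresponding to the characters $-\beta$, with $D_\beta = \{y_\beta = 0\}$. Restricted to $\S_v^+$ the section $\mathsf f_\beta$ therefore equals $\beta^{-1}$ up to a unit that is absorbed by bi-$\K_v$-invariance.

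Assembling these formulas and using that $\beta(t_v) = r_v(\beta)(t_v) = \theta(t_v)$ whenever $t_v \in \S_v$ yields
\[
\mathsf H_v(\mathbf s, t_v) = \prod_{\alpha \in \mathcal A} \prod_{\beta \in \alpha_{F_v}} |\beta(t_v)|_v^{s_\alpha} = \prod_{\theta \in \Delta(\G_{F_v}, \S_v)} |\theta(t_v)|_v^{l_v(\theta) s_\theta},
\]
where the second equality regroups the $\beta$'s by their image under $r_v$ (a single $\Gal(E_w/F_v)$-orbit of size $l_v(\theta)$ contained in a unique global orbit $\alpha$) and invokes the tuple identification $s_\theta^v = s_\alpha$. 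The main obstacle is the toric identification in the second paragraph: although the local structure of $X$ near its closed $\G\times\G$-orbit is classical (De Concini--Procesi), one must carefully track the unit factors produced by the integral model and the compact $\K_v$, which is precisely why the statement holds only at almost all $v$. The analogous calculation for rational points appears in \cite[Proposition~6.3]{STT}, and its method transplants essentially verbatim here.
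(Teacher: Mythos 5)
Your argument is correct and is precisely the computation behind the paper's one-line citation: the paper proves this lemma simply by pointing to equation (6.4) of \cite{STT}, and your reduction via bi-$\K_v$-invariance and the Cartan decomposition to the torus, followed by the toric identification $\|\mathsf f_\beta(t_v)\|_v = |\beta(t_v)|_v^{-1}$ on the closure of $\T$ and the regrouping of absolute simple roots along the fibers of $r_v$, is exactly how that equation is derived there. The sign and exponent bookkeeping ($\mathsf H_v = \prod_\alpha \|\mathsf f_\alpha\|_v^{-s_\alpha}$ giving a positive exponent $l_v(\theta)s_\theta$ on $\S_v(F_v)^+$) checks out, so no changes are needed.
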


\begin{proof}
See (6.4) in \cite{STT}.
\end{proof}

We can also calculate the indicator function $\delta_{\epsilon,v}$ in terms of the Cartan decomposition.

\begin{prop} 
\label{prop:deltaatv}
For all $v \not\in S$, there exists a compact open subgroup $\K_v \subset \G(F_v)$ such that $\delta_{\epsilon,v}$ is invariant under the left and right action of $\K_v$. 
Moreover, for almost all places $v \not\in S$ we have $\delta_{\epsilon,v}(g_v) = 1$ if and only if 
\begin{itemize}
\item for all $\alpha \in \mathcal{A}$ corresponding to $\theta \in \Delta(\G_{F_v},\S_v)$ with $\epsilon_\alpha < 1$ we have $|\theta(t_v)|_v =1$ or $|\theta(t_v)|_v \geq q_v^{m_\alpha}$, and;
\item for all $\alpha \in \mathcal{A}$ corresponding to $\theta \in \Delta(\G_{F_v},\S_v)$ with $\epsilon_\alpha = 1$ we have $|\theta(t_v)|_v =1$.
\end{itemize}
\end{prop}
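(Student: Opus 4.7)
For bi-$\K_v$-invariance, I would exploit the fact that, because $\G$ is connected, the $\G\times\G$-action on the wonderful compactification $X$ preserves each irreducible boundary divisor $D_\alpha$ individually rather than permuting them. Spreading out to the integral model, for $v\notin S$ large enough there is a smooth group scheme $\mathcal G$ over $\mathcal O_v$ acting $\mathcal O_v$-equivariantly on $\mathcal X$ and preserving each $\mathcal D_\alpha$. Since the Campana conditions $n_v(\mathcal D_\alpha,\cdot)\in\{0\}\cup[m_\alpha,\infty]$ are preserved by such automorphisms, $(\mathcal X,\mathcal D_\epsilon)(\mathcal O_v)$ is stable under $\mathcal G(\mathcal O_v)\times\mathcal G(\mathcal O_v)$, giving bi-$\K_v$-invariance of $\delta_{\epsilon,v}$ with $\K_v=\mathcal G(\mathcal O_v)$. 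For the finitely many remaining $v\notin S$, the Campana condition is locally constant (it is cut out by finitely many $v$-adic congruences modulo $\varpi_v^N$ for $N$ large enough), so one can shrink to a sufficiently small compact open subgroup preserving $\delta_{\epsilon,v}$.

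For the characterization in terms of the Cartan decomposition, restrict to almost all $v$ where $\G_{F_v}$ is unramified, $\K_v$ satisfies the Cartan decomposition of Proposition~\ref{prop:Cartan}, and the integral model is $\mathcal G\times\mathcal G$-equivariant. By bi-$\K_v$-invariance, $\delta_{\epsilon,v}(g_v)=\delta_{\epsilon,v}(t_v)$ when $g_v=k_v t_v k_v'$ with $t_v\in\S_v(F_v)^+$. The key geometric input is the standard local description of the wonderful compactification around a $\T$-fixed point: there is an $\S_v$-stable affine open chart in which the boundary divisors appear as coordinate hyperplanes whose defining equations restrict on $\T$ to inverses of the simple roots. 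Combining this with the identity $n_v(\mathcal D_\alpha,t_v)=-\log_{q_v}\|\mathsf f_\alpha(t_v)\|_v$ and Lemma~\ref{lem:heightatv}, one obtains an explicit formula for $n_v(\mathcal D_\alpha,t_v)$ as a weighted combination of $\log_{q_v}|\theta(t_v)|_v$ over the $\theta$'s in $\Delta(\G_{F_v},\S_v)$ corresponding to $\alpha$.

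Translating the Campana inequality $n_v(\mathcal D_\alpha,t_v)\in\{0\}\cup[m_\alpha,\infty]$ into conditions on the individual $|\theta(t_v)|_v$ then yields the stated characterization; the case $\epsilon_\alpha=1$ corresponds to $m_\alpha=\infty$, which forces $|\theta(t_v)|_v=1$ for the corresponding $\theta$. The main obstacle will be the careful bookkeeping of the correspondence between $\alpha\in\mathcal A$ and $\theta\in\Delta(\G_{F_v},\S_v)$, particularly when $\mathcal D_\alpha$ is $F$-irreducible but $F_v$-reducible; the multiplicities $l_v(\theta)$ record the local Galois orbit structure inside the global orbit $\alpha$, and must be handled correctly to match the integral intersection multiplicity $n_v(\mathcal D_\alpha,t_v)$ with the claimed conditions on the simple roots.
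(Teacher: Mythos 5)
Your overall strategy coincides with the paper's. For bi-$\K_v$-invariance the paper passes through the identity $\mathsf H_v'(D_\alpha,g_v)=q_v^{\,n_v(\mathcal D_\alpha,g_v)}$ for the model-induced metric and then invokes Lemma~\ref{lem:height_k-invariant} (which is itself proved via the equivariance you describe, so your first paragraph is an acceptable unwinding of that citation); for the characterization it likewise combines this identity with the Cartan decomposition and Lemma~\ref{lem:heightatv}, exactly as you propose.

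The genuine gap sits precisely at the point you defer to ``careful bookkeeping.'' From $n_v(\mathcal D_\alpha,t_v)=\log_{q_v}\mathsf H_v'(D_\alpha,t_v)$ and Lemma~\ref{lem:heightatv} you only obtain the weighted formula $n_v(\mathcal D_\alpha,t_v)=\sum_{\theta} l_v(\theta)\log_{q_v}|\theta(t_v)|_v$, where the sum runs over the possibly several $\theta\in\Delta(\G_{F_v},\S_v)$ to which $\alpha$ restricts and the multiplicities $l_v(\theta)$ can exceed $1$. The condition $n_v(\mathcal D_\alpha,t_v)\in\{0\}\cup[m_\alpha,\infty]$ imposed on such a weighted sum does not formally translate into the stated per-$\theta$ conditions with exponent $1$, so an extra input is needed to kill the multiplicities and the cross-terms. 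The paper supplies it with a geometric dichotomy you do not: for good $v$ the reduction of $(\mathcal X,\mathcal D)$ is smooth with strict normal crossings, so if $D_\alpha$ is not geometrically integral then $\mathcal D_\alpha$ has no $k_v$-point, hence $n_v(\mathcal D_\alpha,\cdot)\equiv 0$ on $\G(F_v)$ and $\alpha$ imposes no condition at $v$; whereas if $D_\alpha$ is geometrically integral there is a single corresponding $\theta$ with $l_v(\theta)=1$ and $n_v(\mathcal D_\alpha,t_v)=\log_{q_v}|\theta(t_v)|_v$, from which the two bullet points follow immediately ($\epsilon_\alpha=1$ giving $m_\alpha=\infty$, i.e.\ $|\theta(t_v)|_v=1$). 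Without this dichotomy your final ``translation'' step does not go through.
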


\begin{proof}
For all places $v \not\in S$, we consider the metrization of $\mathcal O(D_\alpha)$ which is induced by the integral model $\mathcal X$. (This may not be equal to the metrization we fix.) For such a $v$, we have
\[
\mathsf H_{v}'(D_\alpha, g_v) = q_v^{n_v(\mathcal D_\alpha, g_v)},
\]
where $\mathsf H_{v}'(D_\alpha, g_v)$ is the local height function induced by $\mathcal X$.
In particular the first statement follows from this and Lemma~\ref{lem:height_k-invariant}.

On the other hand, for the wonderful compactification $X$, the height function is given by
\[
\mathsf H_{v}'(D_\alpha, g_v) = |\theta(t_v)|^{l_v(\theta)}_v,
\]
for almost all non-archmedean places $v$ by Lemma~\ref{lem:heightatv}. 
When $D_\alpha$ is not geometrically integral, then there is no $k_v$-point on $\mathcal D_\alpha$ because the reduction of $(\mathcal X, \mathcal D)$ at $v$ is still smooth with a strict normal crossings divisor.
In particular we have $\mathsf H_{v}'(D_\alpha, g_v)  = 1$ for any $g_v$.
When $D_\alpha$ is geometrically irreducible, then we have
\[
\mathsf H_{v}'(D_\alpha, g_v) = |\theta(t_v)|_v.
\]
Thus our assertion follows.
\end{proof}

\subsubsection{Fixing $\K_v$.}

For every non-archimedean place $v$, we now fix the choice of such a $\K_v$ satisfying Proposition~\ref{prop:Cartan}, Lemma~\ref{lem:height_k-invariant}, and Proposition~\ref{prop:deltaatv} such that  $\K_v = \G(\mathcal O_v)$ for all but finitely many $v$. For archimedean $v$, let $\K_v$ be such that $\G(F_v)=\K_v \S_v(F_v)^+ \K_v$ as in the Cartan decomposition. Let $\K=\prod_v \K_v$ and let $\K_0=\prod_{v < \infty}\K_v.$

\subsection{Height zeta function} \label{sec:HZF}

By a Tauberian theorem, to establish the asymptotic formula it suffices to establish analytic properties of the height zeta function
\[
\mathsf Z(sL)=\sum_{\gamma \in \G(F)_\epsilon} \mathsf H(L, \gamma)^{-s}.
\]
In fact we will consider the function $\mathsf Z_\epsilon:\textrm{Pic} (X)_\mathbb{C} \times \G(\mathbb{A}_F) \rightarrow \mathbb{C}$ defined by
\[
\mathsf Z_\epsilon(\bold{s},g)=\sum_{\gamma \in \G(F)_\epsilon}\mathsf H(\bold{s}, \gamma g)^{-1}=\sum_{\gamma \in \G(F)} \mathsf H(\bold{s},\gamma g)^{-1}\delta_\epsilon(\gamma g).
\]
Observe that if we set $g=e$, the identity element in $\G(F)$, we get $\mathsf Z_\epsilon(\bold{s},e)=\sum_{\gamma \in \G(F)_\epsilon} \mathsf H(\bold{s},\gamma)^{-1}$.
The next result obtains a spectral decomposition of the height zeta function.

\begin{prop} \label{prop:spectral_decomposition}
 Given $g \in \G(\mathbb{A}_F)$, the series defining $\mathsf Z_\epsilon(.,g)$ converges absolutely to a holomorphic function for $\bold{s} \in \mathcal{T}_{\gg 0}.$ For all $\bold{s}$ in the region of convergence we have $\mathsf Z_\epsilon(\bold{s},.) \in C^\infty(\G(F) \backslash \G(\mathbb{A}_F))$, and for all integers $n \geq 1$ and all $\partial \in \mathfrak U(\mathfrak{g})$ we have $\partial^n \mathsf Z_\epsilon(\bold{s},.) \in \mathsf L^2(\G(F)\backslash \G(\mathbb A_F))$ where $\mathfrak U(\mathfrak{g})$ is the universal enveloping algebra of $\mathfrak g$. Moreover
\begin{align*}
\mathsf Z(\bold{s},e) & =\sum_{\chi \in \mathscr{X}} \int_{\G(\mathbb{A}_F)} \delta_\epsilon(g) \mathsf H(\bold{s},g)^{-1}\chi(g) \ \mathrm dg \\
& + \sum_{\chi \in \mathfrak{X}}\sum_P n(A_P)^{-1}\int_{\Pi(M_P)}\left(\sum_{\phi \in B_P(\pi)_\chi}E(e,\phi)\int_{G(\mathbb{A}_F)}\delta_\epsilon(g) \mathsf H(\bold{s},g)^{-1}\overline{E(g,\phi)} \ \mathrm dg\right) \ \mathrm d \pi_p.
\end{align*}
Here this is the spectral expansion described in \cite[Section 4.4]{LTBT18}.

\end{prop}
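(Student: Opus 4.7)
The plan is to proceed in three stages: (1) establish absolute convergence of $\mathsf Z_\epsilon(\bold s, g)$ in a region of the form $\mathcal T_{\gg 0}$; (2) use this to place $\mathsf Z_\epsilon(\bold s, \cdot)$, together with its derivatives with respect to elements of $\mathfrak U(\mathfrak g)$, inside $\mathsf L^2(\G(F)\backslash\G(\mathbb A_F))$; (3) apply Arthur's spectral expansion to unfold the sum into the stated integral formula.

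For (1), the key input is the Cartan decomposition together with the explicit formula of Lemma~\ref{lem:heightatv} for $\mathsf H_v$ at almost all places and the fact that the Haar measure on $\G(F_v)$ in Cartan coordinates acquires a modulus character comparable to $\prod_\theta |\theta(t_v)|_v^{l_v(\theta)\kappa_\theta}$. Comparing the local height to this modulus and summing over the lattice of Cartan exponents yields an Euler product which, for $\Re(s_\alpha)$ sufficiently large (precisely the cone $\mathcal T_{\gg 0}$ encoding that the relevant geometric series at each place converge), is dominated by a product of shifted Dedekind-type zeta factors and is therefore absolutely convergent. Since the bound is uniform in $g$ on compact sets of $\G(\mathbb A_F)$, holomorphy of $\mathsf Z_\epsilon(\cdot, g)$ and continuity in $g$ follow.

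For (2), left $\G(F)$-invariance of $\mathsf Z_\epsilon(\bold s,\cdot)$ is immediate from the change of variable $\gamma\mapsto\gamma_0^{-1}\gamma$. Smoothness in $g$ comes from the fact that $\mathsf H(\bold s,\cdot)^{-1}\delta_\epsilon$ is right-smooth by construction of the adelic metric (smoothness at archimedean places, local constancy at nonarchimedean places by Proposition~\ref{prop:deltaatv} and Lemma~\ref{lem:height_k-invariant}) and that the series converges uniformly on compacta so differentiation under the sum is legal. For any $\partial\in\mathfrak U(\mathfrak g)$, applying $\partial^n$ introduces only polynomial factors in the Cartan coordinates of archimedean components, which are absorbed by a slight shrinking of the convergence region. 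Boundedness of $\G(F)\backslash\G(\mathbb A_F)$ (finite volume, as $\G$ is semisimple) combined with the uniform bound on $\partial^n \mathsf Z_\epsilon(\bold s, g)$ then gives the $\mathsf L^2$ statement.

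For (3), once $\mathsf Z_\epsilon(\bold s,\cdot)$ is known to lie in $\mathsf L^2(\G(F)\backslash\G(\mathbb A_F))$ together with sufficiently many derivatives, the Arthur spectral decomposition of \cite{Arthur1,Arthur3,Arthur2} recalled in \cite[Section 4.4]{LTBT18} applies: it expresses any such function as the sum of its automorphic characters projections and of contributions of cuspidal data $(P,\pi)$ weighted by Eisenstein series. Evaluating the resulting Parseval-type identity at $g=e$, unfolding the inner products against characters and against Eisenstein series $E(\cdot,\phi)$, and using left $\G(F)$-invariance of $\mathsf Z_\epsilon(\bold s,\cdot)$ to convert the integrals over $\G(F)\backslash\G(\mathbb A_F)$ into integrals over $\G(\mathbb A_F)$ against $\delta_\epsilon(g)\mathsf H(\bold s,g)^{-1}$ yields the displayed formula. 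The main obstacle is the joint control needed in step (2): one must check that after an arbitrary number of differentiations the series is still $\mathsf L^2$ uniformly on vertical strips, so that the spectral expansion is not merely formal but genuinely convergent and may be term-wise manipulated; this boils down to a careful estimate of the Cartan-coordinate integrals analogous to those in \cite{STT,Chow19}, which is where most of the technical work resides.
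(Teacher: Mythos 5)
Your overall architecture (convergence, then $\mathsf L^2$-membership of all $\mathfrak U(\mathfrak g)$-derivatives, then Arthur's spectral expansion) is the right one, but the paper takes a much shorter route: it invokes \cite[Lemma 3.1]{STT} as a black box, whose hypotheses are the convergence of the \emph{weighted} adelic integrals $\int_{\G(\mathbb A_F)}|\mathsf H(\mathbf s,g)|\,\|g\|^N\,\mathrm dg$ and $\int_{\G(\mathbb A_F)}|\bDelta^n\mathsf H(\mathbf s,g)|\,\|g\|^N\,\mathrm dg$; these are already verified in the proof of \cite[Theorem 7.1]{STT}, and the only new observation needed here is that multiplying the integrand by the indicator $\delta_\epsilon$ (which satisfies $|\delta_\epsilon|\leq 1$ and is bi-$\K_0$-invariant by Proposition~\ref{prop:deltaatv}) cannot destroy convergence. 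You instead sketch a re-derivation of that lemma. What the citation buys is precisely the step your sketch is thinnest on: the passage from convergence of an adelic \emph{integral} to absolute convergence and global boundedness of the \emph{sum} over the discrete group $\G(F)$. Your Euler-product computation in step (1) controls $\int_{\G(\mathbb A_F)}\mathsf H(\mathbf s,g)^{-1}\delta_\epsilon(g)\,\mathrm dg$, not $\sum_{\gamma\in\G(F)}\mathsf H(\mathbf s,\gamma g)^{-1}\delta_\epsilon(\gamma g)$; the standard comparison between the two requires the polynomial weight $\|g\|^N$ (to absorb the polynomial growth of the number of points of $\G(F)$ in a norm ball), and this weight is exactly why the hypotheses of \cite[Lemma 3.1]{STT} are stated with $\|g\|^N$ rather than with the bare height integral. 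Relatedly, in step (2) a bound ``uniform on compact sets'' is not enough: $\G(F)\backslash\G(\mathbb A_F)$ has finite volume but is non-compact when $\G$ is isotropic, so you need global boundedness of $\partial^n\mathsf Z_\epsilon(\mathbf s,\cdot)$ on the whole quotient, which again comes out of the same weighted-integral comparison. If you add that lattice-point-versus-integral argument (or simply cite \cite[Lemma 3.1]{STT} for it, as the paper does), your proof is complete; everything else in your outline, including the treatment of $\delta_\epsilon$ as a bounded, right-$\K_0$-invariant, locally constant multiplier and the appeal to the spectral expansion of \cite[Section 4.4]{LTBT18}, matches the intended argument.
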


Note that the notation in the above formula is explained in \cite[Section 4.4]{LTBT18}, and we do not repeat those in this paper.

\begin{proof}
This is a consequence of \cite[Lemma 3.1]{STT}. Explicitly, by the proof of \cite[Theorem 7.1]{STT}, $\mathsf H(\bold{s}, g)$ satisfies the conditions of Lemma 3.1 of \cite{STT}, i.e., with $n, N$ as in \cite[Lemma 3.1]{STT} the integrals
$$
\int_{\G(\mathbb{A}_F)} |\mathsf H(\bold{s}, g)| \| g \|^N \, dg, \quad
\int_{\G(\mathbb{A}_F)} |\bDelta^n \mathsf H(\bold{s}, g)| \| g \|^N \, dg,  
$$
with $\bDelta, \| \, . \, \|$ as in \cite[Section 3.2]{STT}, converge.  These integrals still converge if we replace $\mathsf H(\bold{s}, g)$ by $\mathsf H(\bold{s}, g)\delta_\epsilon(g)$. We now apply \cite[Lemma 3.1]{STT} to get the result. 
\end{proof}
In the following sections, we will compute the rightmost pole and the order of the pole. In particular, we will show that, as in the prior cases of rational and integral points, the contributions to the right most pole come from one-dimensional representations. 

\section{Height integrals I: one-dimensional representations}
\label{sec:heightI}

In this section, we will establish analytic properties of the infinite product
\[
\int_{\G(\mathbb{A}_F)} \delta_\epsilon(g) \mathsf H(\bold{s},g)^{-1}\chi(g) \, \mathrm dg =\prod_{v \in \Omega_F}\int_{\G(F_v)} \mathsf H_v(\bold{s},g_v)^{-1} \chi_v(g_v) \delta_{\epsilon,v}(g_v) \, \mathrm dg_v
\]
where $\chi \in \mathscr{X}$ is an automorphic character of $\G$.
In this section it will be convenient for us to not necessarily assume that $D_\epsilon$ is klt, i.e. we allow $\epsilon_\alpha = 1$. The klt condition is only relevant to the analysis of the infinite dimensional representations treated in Section \ref{sec:heightII}. We begin with the following basic result.

\begin{lem} \label{lem:Fourier_vanish}
Suppose $\Re \mathbf s \gg 0$. If $\chi_v$ is not $\K_v$-invariant, then 
$$
\int_{\G(F_v)} \mathsf H_v(\bold{s},g_v)^{-1} \chi_v(g_v) \delta_{\epsilon,v}(g_v) \, \mathrm dg_v=0. 
$$
\end{lem}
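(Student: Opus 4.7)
The plan is to exploit bi-$\K_v$-invariance of the other factors against the character. By Lemma~\ref{lem:height_k-invariant}, the local height $\mathsf H_v(\mathbf s, \cdot)$ is bi-$\K_v$-invariant, and by Proposition~\ref{prop:deltaatv} the indicator $\delta_{\epsilon, v}$ is also bi-$\K_v$-invariant. Thus the only factor of the integrand that is not $\K_v$-invariant is $\chi_v$. Since $\chi_v$ is a continuous homomorphism $\G(F_v) \to S^1$, the hypothesis that $\chi_v$ is not $\K_v$-invariant means there exists $k_0 \in \K_v$ with $\chi_v(k_0) \neq 1$.

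Denote the integral in question by $I$. First I would verify absolute convergence, which is immediate from the convergence asserted in Proposition~\ref{prop:spectral_decomposition} for $\Re \mathbf s \gg 0$, together with $|\chi_v| = |\delta_{\epsilon, v}| \leq 1$. With absolute convergence in hand, I apply the change of variables $g_v \mapsto k_0 g_v$, using left-invariance of the Haar measure $\mathrm d g_v$. The height and $\delta_{\epsilon, v}$ are unchanged by this substitution, while $\chi_v(k_0 g_v) = \chi_v(k_0) \chi_v(g_v)$ since $\chi_v$ is a homomorphism. This yields
\[
I = \chi_v(k_0) \int_{\G(F_v)} \mathsf H_v(\mathbf s, g_v)^{-1} \chi_v(g_v) \delta_{\epsilon, v}(g_v) \, \mathrm d g_v = \chi_v(k_0) \, I.
\]
Since $\chi_v(k_0) \neq 1$, this forces $I = 0$.

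Alternatively one can phrase it via averaging: replace $\chi_v$ by its $\K_v$-average on the left, i.e.\ multiply $I$ by $\int_{\K_v} \mathrm d k$ (with $\K_v$ of some finite volume, which is positive) and interchange integrals, obtaining
\[
\vol(\K_v) \cdot I \;=\; \int_{\G(F_v)} \mathsf H_v(\mathbf s, g_v)^{-1} \delta_{\epsilon, v}(g_v) \left( \int_{\K_v} \chi_v(k g_v) \, \mathrm d k \right) \mathrm d g_v.
\]
The inner integral factors as $\chi_v(g_v) \int_{\K_v} \chi_v(k) \, \mathrm d k$, which vanishes because $\chi_v|_{\K_v}$ is a nontrivial continuous character of the compact group $\K_v$. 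Either route works, and there is no real obstacle here; the content of the lemma is the compatibility of the two invariance statements (Lemma~\ref{lem:height_k-invariant} and Proposition~\ref{prop:deltaatv}) established earlier, after which the vanishing is a standard orthogonality argument.
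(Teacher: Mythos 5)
Your proof is correct and is essentially identical to the paper's: the paper also picks $\kappa_v \in \K_v$ with $\chi_v(\kappa_v)\neq 1$, translates the variable of integration (on the right rather than the left, which is immaterial by bi-$\K_v$-invariance of $\mathsf H_v$ and $\delta_{\epsilon,v}$ and unimodularity of $\G(F_v)$), and concludes $I=\chi_v(\kappa_v)I=0$. The averaging variant you mention is just a repackaging of the same orthogonality argument.
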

\begin{proof}
The assumption $\Re \mathbf s \gg 0$ guarantees the absolute convergence of the integral. Pick  $\kappa_v \in \K_v$ such that $\chi_v(\kappa_v) \ne 1$. Then making a change of variable $g_v \mapsto g_v\kappa_v$ and using the $\K_v$-invariance of $\mathsf H_v$ and $\delta_{\epsilon, v}$, gives 
$$
\int_{\G(F_v)} \mathsf H_v(\bold{s},g_v)^{-1} \chi_v(g_v) \delta_{\epsilon,v}(g_v) \, \mathrm dg_v= \chi_v(\kappa_v) 
\int_{\G(F_v)} \mathsf H_v(\bold{s},g_v)^{-1} \chi_v(g_v) \delta_{\epsilon,v}(g_v) \, \mathrm dg_v. 
$$
The result is now obvious. 
\end{proof}

In particular we need to only consider automorphic characters $\chi=\otimes'_v \chi_v \in \mathscr{X}^{\K_0}$ where $\mathscr{X}^{\K_0}$ is the group of automorphic characters which are invariant under the action of $\K_0$ and $\otimes'_v$ means the restricted tensor product. 

\subsection{Hecke characters}\label{subsect:heckecharacters}
We associate to each automorphic character $\chi$ of $\G$ a collection of Hecke $L$-functions $L(s, \chi_\alpha)$ for $\alpha \in \mathcal{A}$,
 as follows. 
 This construction is from Section 2.8 of \cite{STT} which we now briefly recall. 

Let $\G'$ denote the (unique up to isomorphism) quasi-split form of $\G$. The Galois group of $E$ over $F$ acts on the set $\Delta(\G',\T') $ 
of simple roots via  the natural action on the set of characters of $\T'$. Let $\alpha \in \mathcal A$. Then $\alpha$ is an orbit of the Galois action on $\Delta(\G', \T')$. Fix a simple root $\beta \in \alpha$. 
and let $F_\beta$ be the field of definition of $\beta$, i.e., the fixed field of the stabilizer of $\beta$ under the Galois action. Then the cocharacter $\check{\beta}:\mathbb G_m \rightarrow \T$ is defined over $F_\beta$. It induces a continuous homomorphism $\check{\beta}_\mathbb{A}:\mathbb{A}_{F_\beta}^\times \rightarrow \T(\mathbb{A}_{F_\beta})$. For each $\beta$ we have the following norm map:
for each place $v$ of $F$ and place $u$ of $F_\beta$ that lies over $v$, we define 
\[
N_{u/v}:\T((F_\beta)_u) \rightarrow \T(F_v), \ \ \ t \mapsto \prod_{\sigma \in \Gal((F_\beta)_u/F_v)}\sigma(t).
\]
Next we define
\[
N_v:\prod_{u|v}\T((F_\beta)_u) \rightarrow \T(F_v), \ \ \ (t_u)_u \mapsto \prod_{u|v} N_{u/v}(t_u).
\]
Finally, we define 
\[
N_{F_\beta}:\T(\mathbb{A}_{F_\beta}) \rightarrow \T(\mathbb{A}_F), \ \ \ (t_u)_u \mapsto (N_v(t_u))_v.
\]
For each character $\chi$ of $\T(\mathbb{A}_F)/\T(F)$, the map $\chi_\beta$ defined by
\[
\chi_\beta = \chi \circ N_{F_\beta} \circ \check{\beta}_\mathbb{A}:\mathbb{A}^\times_{F_\beta} \rightarrow S^1
\]
is a Hecke character of $F_\beta$. For $u \in \Omega_{F_\beta}$, we define $\chi_{\beta,u}$ by $\chi_\beta=\prod_{u \in \Omega_{F_\beta}}\chi_{\beta,u}.$ The Hecke $L$-function $L(s,\chi_\beta)$ depends only on the Galois orbit $\alpha$. For this reason, we denote the $L$-function $L(s,\chi_\beta)$ by $L(s,\chi_\alpha)$. Write 
$$
L(s, \chi_\alpha) = \prod_u L_u (s, \chi_{\beta, u}). 
$$
\subsection{Good primes}
We call a place $v$ {\em good} if it satisfies the following conditions: 
\begin{itemize} 
\item $v \not\in S$ .
\item $v$ is such that our model $\mathcal X$ has good reduction at $v$ and our metrizations are induced by this model. 
\item Lemma~\ref{lem:heightatv} and Proposition~\ref{prop:deltaatv} are valid for $v$. 
\item $v \not\in S_F$ with $S_F$ as in \cite[Lemma 6.4]{STT}.  
\item The character $\chi_v$ is unramified. 
\end{itemize} 
Let $S'(\chi)$ be a finite set of places such that any $v \not\in S'(\chi)$ is good.

\

Now suppose $v$ is a good place for $\chi$, and let $u$ be a place of $F_\beta$ such that $u \mid v$.  Suppose $(F_\beta)_u/F_v$ is unramified and that $\chi_{\beta,u}$ is unramified. Let $\varpi_u$ be a prime element of $(F_\beta)_u$. Then
\[
L_u(s,\chi_{\beta,u})=(1-\chi_{\beta,u}(\varpi_u)q_u^{-s})^{-1}.
\]
Next, we view $\beta$ in $\Delta(\G'_{F_v},\T'_{F_v})$. The restriction of $\beta$ to $\S'_v$ is an element $\theta_v \in \Delta(\G'_{F_v},\S'_v)$. For $\theta \in \Delta(\G'_{F_v},\S'_v)$, let 
\[
l_v(\theta)=\#\{\beta \in \Delta(\G'_{F_v},\T'_{F_v}) \ \textrm{with} \ \beta|_{\S'_v}=\theta\}.
\]

\begin{lem}
\label{lem:local} 
\[
L_u(s,\chi_{\beta,u})= (1-\chi_v(\check \theta_v(\varpi_v))q_v^{-l_v(\theta_v)s})^{-1}.
\]
\end{lem}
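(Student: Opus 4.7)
The plan is to reduce the claim to a Frobenius-level computation. Since $u \mid v$ is unramified and $\chi_{\beta,u}$ is unramified, both sides of the claimed identity are Euler factors of unramified Dirichlet series, so the lemma is equivalent to the two equalities (a) $q_u = q_v^{l_v(\theta_v)}$ and (b) $\chi_{\beta,u}(\varpi_u) = \chi_v(\check{\theta}_v(\varpi_v))$.

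For (a) I will use the standard description of places of $F_\beta$ above $v$. By construction $F_\beta$ is the fixed field of $\mathrm{Stab}_{\Gamma}(\beta)$, so places of $F_\beta$ above $v$ correspond to orbits of a decomposition group $D_v \subset \Gamma$ at $v$ acting on the Galois orbit $\alpha = \Gamma \cdot \beta$, and for the place $u$ in the statement the residue degree $f_{u/v}$ equals the size of the corresponding $D_v$-orbit of $\beta$. Under the good-place hypotheses $\G'_{F_v}$ is quasi-split and splits over an unramified extension, so $D_v$ acts through a finite cyclic quotient and the orbit $D_v \cdot \beta$ coincides with the fiber $r_v^{-1}(\theta_v)$ (cf.\ \cite[Sections~2.8--2.9]{STT}). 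This gives $l_v(\theta_v) = f_{u/v}$ and hence (a).

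For (b) I will unwind $\chi_\beta = \chi \circ N_{F_\beta} \circ \check{\beta}_{\mathbb A}$ by evaluating it on the idele of $F_\beta$ concentrated at $u$ with local component $\varpi_u$, obtaining $\chi_{\beta,u}(\varpi_u) = \chi_v\bigl(N_{u/v}(\check{\beta}_u(\varpi_u))\bigr)$. Because $(F_\beta)_u/F_v$ is unramified I may take $\varpi_u = \varpi_v$; and since $\check{\beta}$ is defined over $F_\beta$ while each $\sigma \in \mathrm{Gal}((F_\beta)_u/F_v)$ fixes $\varpi_v$, one has $\sigma(\check{\beta}_u(\varpi_v)) = \check{\sigma\beta}(\varpi_v)$. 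Taking the product over $\sigma$ collapses the norm to $\bigl(\sum_{\sigma \in D_{u/v}} \check{\sigma\beta}\bigr)(\varpi_v)$, and by step (a) the indexing set $D_{u/v} \cdot \beta$ is precisely $r_v^{-1}(\theta_v)$.

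The main obstacle is the remaining cocharacter identity
\[
\sum_{\beta' \in r_v^{-1}(\theta_v)} \check{\beta'} \;=\; \check{\theta}_v \quad \text{in } \mathfrak X_*(\T').
\]
The left-hand side is automatically $D_v$-invariant, so it lies in $\mathfrak X_*(\S'_v) \hookrightarrow \mathfrak X_*(\T')^{D_v}$, and the identity will follow by verifying that it satisfies the defining duality $\langle \eta, \cdot \rangle = -\delta_{\eta\theta_v}$ on all $\eta \in \Delta(\G'_{F_v}, \S'_v)$, using the adjoint hypothesis on $\G$ (so that simple roots form a basis of $\mathfrak X^*(\S'_v)$) together with the compatibility of $r$ and $r_v$ with the character-cocharacter pairing in the quasi-split case recorded in \cite[Sections~2.8--2.9]{STT}. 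Substituting back yields $\chi_{\beta,u}(\varpi_u) = \chi_v(\check{\theta}_v(\varpi_v))$, which combined with (a) completes the proof.
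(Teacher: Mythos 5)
Your argument is correct. The paper itself offers no proof of Lemma~\ref{lem:local} beyond the citation ``this is Proposition 2.9 in \cite{STT}'', so the comparison here is really between your self-contained derivation and the cited source; your derivation is exactly the kind of unwinding that proposition rests on, and all three of your ingredients check out. The reduction to (a) $q_u = q_v^{l_v(\theta_v)}$ and (b) $\chi_{\beta,u}(\varpi_u) = \chi_v(\check\theta_v(\varpi_v))$ is forced by the displayed form $L_u(s,\chi_{\beta,u})=(1-\chi_{\beta,u}(\varpi_u)q_u^{-s})^{-1}$ given just before the lemma. For (a), the identification of $f_{u/v}$ with the size of the decomposition-group orbit of $\beta$ is standard for $v$ unramified in $E$, and the identification of that orbit with $r_v^{-1}(\theta_v)$ is precisely the transitivity of the local Galois action on fibers of $r_v$ for quasi-split groups recorded in \cite[2.8--2.9]{STT}. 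For (b), the collapse of the norm uses that $\sigma \mapsto \sigma\beta$ is a bijection from $\Gal((F_\beta)_u/F_v)$ onto the orbit (which holds because $\mathrm{Stab}_{\Gamma_{F_v}}(\beta) = \Gamma_{(F_\beta)_u}$ by the definition of $F_\beta$), together with the Galois-equivariance $\sigma\check\beta = \check{(\sigma\beta)}$ of the normalization $\langle\alpha,\check\theta\rangle = -\delta_{\alpha\theta}$. The final cocharacter identity $\sum_{\beta'\in r_v^{-1}(\theta_v)}\check{\beta'} = \check\theta_v$, which you only sketch, is indeed routine: the left side is $D_v$-invariant, hence lies in $\mathfrak X_*(\S'_v)$, and for any $\eta = r_v(\beta'')$ one has $\langle \eta, \sum_{\beta'}\check{\beta'}\rangle_{\S'_v} = \sum_{\beta'}\langle\beta'',\check{\beta'}\rangle_{\T'} = -\delta_{\eta\theta_v}$, which is the defining property of $\check\theta_v$ since the relative simple roots form a basis in the adjoint case. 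The only bookkeeping point worth making explicit is that $\theta_v$ depends on which place $u\mid v$ (equivalently, which $D_v$-orbit inside $\alpha$) you fix, so the matching between ``the $u$ in the statement'' and ``the $\theta_v$ in the statement'' must be the one induced by the chosen embedding $F_\beta\hookrightarrow \bar F_v$; this is glossed over in the paper's statement as well, and your phrasing handles it correctly.
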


\begin{proof}
This is Proposition 2.9 in \cite{STT}.
\end{proof}

The following result generalizes the computations in 
\cite[Sections 6 and 7]{STT} to the context of Campana points.
For a character $\rho$  of $\mathbb{A}_F^\times/F^\times$  we define the partial $L$-function
\[
L^{S'(\chi)}(s,\rho)=\prod_{v \notin S'(\chi)}L_v(s,\rho_v),
\]
where the local factor is 
\[
L_v(s,\rho_v)=(1-\rho_v(\varpi_v)q_v^{-s})^{-1}
\]
for unramified $v$. Let $\rho$ be a Hecke character of $\mathbb{A}_F^\times/F^\times$. 
We also denote $\mathcal A_{\mathrm{klt}} = \{ \alpha \in \mathcal A \, | \, \epsilon_\alpha < 1\}$.
Recall the definition of $(\kappa_\alpha)_{\alpha \in \mathcal{A}}$ from \eqref{equ:anticanonical}.

\begin{thm}\label{thm:one-dim}
We have
$$
\!
\int_{\G(\A_{S'(\chi)})}  \delta_\epsilon(g)  \mathsf H_{S'(\chi)}({\bf s}, g)^{-1}\chi_{S'(\chi)}(g) \, \mathrm dg = f_{S'(\chi),\chi}(\bold{s}) \prod_{\alpha \in \mathcal{A}_{\mathrm{klt}}}L^{S'(\chi)}(m_\alpha(s_\alpha-\kappa_\alpha),\chi_\alpha^{m_\alpha})$$
where $f_{S'(\chi),\chi}$ is a function given an absolutely convergent Euler product on the domain
$$
\left\{(s_\alpha)_{\alpha \in \mathcal A}  \mid {\Re s_\alpha  > \kappa_\alpha + 1/(m_\alpha +1) \text{ for all} \ \alpha \in \mathcal{A}}\right\}.
$$
In particular $f_{S'(\chi),\chi}$ is uniformly bounded on compact sets and holomorphic in this domain.
Moreover for the trivial character, the function $f_{S'(1),1}$ is non-zero in its domain for $s_\alpha$ real.
\end{thm}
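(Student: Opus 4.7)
The integrand is bi-$\K_v$-invariant at every good place (Lemmas~\ref{lem:Fourier_vanish}, \ref{lem:height_k-invariant} and Proposition~\ref{prop:deltaatv}), so the Cartan decomposition reduces the Euler factor $I_v$ to a weighted lattice sum over $(n_\theta)_{\theta \in \Delta(\G_{F_v}, \S_v)} \in \mathbb Z_{\geq 0}^\Delta$ parametrising $t_v \in \S_v(F_v)^+$ via $|\theta(t_v)|_v = q_v^{n_\theta}$. Lemma~\ref{lem:heightatv} gives $\mathsf H_v(\bold s, t_v)^{-1} = \prod_\theta q_v^{-n_\theta l_v(\theta) s_\theta}$; the unramified Cartan Jacobian supplies $\delta_{\B_v}(t_v) = \prod_\theta q_v^{n_\theta l_v(\theta) \kappa_\alpha}$ (with $\alpha \in \mathcal A$ corresponding to $\theta$, using \eqref{equ:tworho}), up to a rational correction bounded uniformly in $\bold s$; and $\chi_v(t_v) = \prod_\theta \chi_v(\check\theta(\varpi_v))^{n_\theta}$. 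This is the setup of \cite[\S\S6--7]{STT}, the sole new ingredient being the weight $\delta_{\epsilon,v}$.

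By Proposition~\ref{prop:deltaatv} the Campana weight restricts the $n_\theta$-summation to $\{0\}$ when $\epsilon_\alpha = 1$ and to $\{0\} \cup \{m_\alpha, m_\alpha + 1, \ldots\}$ when $\epsilon_\alpha < 1$. Setting $x_{\alpha,v} := \chi_v(\check\theta(\varpi_v))\, q_v^{-l_v(\theta)(s_\alpha - \kappa_\alpha)}$, the lattice sum factors as a product indexed by $\mathcal A$; the $\alpha$-factor equals $1$ when $\alpha \notin \mathcal A_{\mathrm{klt}}$ and
\[
1 + \sum_{n \geq m_\alpha} x_{\alpha,v}^n \;=\; \frac{1 - x_{\alpha,v} + x_{\alpha,v}^{m_\alpha}}{1 - x_{\alpha,v}}
\]
when $\alpha \in \mathcal A_{\mathrm{klt}}$. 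Lemma~\ref{lem:local} identifies the local factor of $L^{S'(\chi)}(m_\alpha(s_\alpha - \kappa_\alpha), \chi_\alpha^{m_\alpha})$ as $(1 - x_{\alpha,v}^{m_\alpha})^{-1}$; extracting this from each $\alpha$-piece, the telescoping identity
\[
(1 - x^m)\,\frac{1 - x + x^m}{1 - x} \;=\; 1 + x^{m+1} + x^{m+2} + \cdots + x^{2m-1}
\]
shows that the residual $\alpha$-contribution at $v$ equals $1 + O\bigl(q_v^{-l_v(\theta)(m_\alpha + 1)(\Re s_\alpha - \kappa_\alpha)}\bigr)$. Combined with the bounded Jacobian correction (handled exactly as in \cite[\S\S6--7]{STT}), this produces an Euler product $f_{S'(\chi),\chi}$ that converges absolutely and uniformly on compacta of $\{\Re s_\alpha > \kappa_\alpha + 1/(m_\alpha+1)\text{ for all }\alpha \in \mathcal A\}$, and hence defines a holomorphic function there.

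For $\chi = 1$ and real $\bold s$, every $I_v$ is a sum of positive reals and every local Hecke factor is positive, so each local ratio is positive; an absolutely convergent product of positive reals is a positive real, which yields $f_{S'(1),1}(\bold s) \neq 0$.

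\textbf{Main obstacle.} The principal bookkeeping step is aligning the $\delta_{\B_v}$-Jacobian with the $\kappa_\alpha$-shift in the $L$-function argument $m_\alpha(s_\alpha - \kappa_\alpha)$. This requires the identity \eqref{equ:tworho} together with the dictionary among $\Delta(\G', \T')$, $\Delta(\G_{F_v}, \S_v)$, and the orbit set $\mathcal A$ via the restriction maps of \S\ref{sec:algebraic_groups}, so that the multiplicities $l_v(\theta)$ match consistently on both sides when the Hecke $L$-factor is extracted from the local sum.
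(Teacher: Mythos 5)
Your proposal follows essentially the same route as the paper's proof: reduction to a lattice sum via the Cartan decomposition, the volume asymptotic $\vol(\K_v t\K_v)=\mathrm{d}_{\B}(t)(1+O(q_v^{-1}))$ from \cite{STT}, the Campana restriction of the summation range to $\{0\}\cup\{m_\alpha,m_\alpha+1,\dots\}$, and extraction of the local $L$-factor $(1-x_{\alpha,v}^{m_\alpha})^{-1}$ — your telescoping identity $(1-x^{m})\frac{1-x+x^{m}}{1-x}=1+x^{m+1}+\cdots+x^{2m-1}$ is exactly the paper's manipulation in its displayed computation, just written as an explicit polynomial, and it yields the same convergence exponent $\kappa_\alpha+1/(m_\alpha+1)$. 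The argument for positivity at the trivial character also matches the paper's.
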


\begin{proof}
Write $\chi_{S'(\chi)}= \otimes'_{v\not\in S'(\chi)}\chi_v$. The collection of elements $\{
\check\theta(\varpi_v)\}_{\theta \in \Delta(\G_{F_v},\S_v)}$ 
forms a basis for the semi-group $\S_v(F_v)^+$.
For ${\bf a} = (a_\alpha)_{\alpha \in \mathcal{A}}$ where $a_\alpha$ is an integer we set
\begin{equation}
t_v({\bf a}) = \prod_{\theta \in \Delta(\G_{F_v},\S_v)}\check\theta(\varpi_v)^{a_\theta} \in  \S_v(F_v)^+.
\end{equation}
If $\alpha \in \mathcal{A}$ corresponds to $\theta \in \Delta(\G_{F_v},\S_v)$, then we set $s^v_\theta=s_\alpha$. For each $v$ for which $\G_{F_v}$ is quasi-split, we define
\[
\langle {\bf s}, {\bf a}\rangle_v = \sum_{\theta \in \Delta(\G_{F_v},\S_v)}  s^v_\theta a^v_\theta l_v(\theta).
\]
Using bi-$\K_v$-invariance and Lemma~\ref{lem:heightatv}, we express each local integral as an infinite sum:
\begin{equation}
\label{eqn:rewrite}
\int_{\G(F_v)}\delta_{\epsilon,v}
(g_v) \mathsf H_v({\bf s}, g_v)^{-1} \chi_v(g_v) \, \mathrm dg_v =
\sum_{{\bf a} \in \N^r} \delta_{\epsilon,v}(t_v({\bf a})) 
q_v ^{-\langle{\bf s,  \bf a\rangle}_v}  \chi_v(t_v({\bf
a}))\text{vol}(\sK_v t_v({\bf a})\sK_v).
\end{equation}
Let $\mathrm{d}_{\B}(t_v({\bf a})):=q_v^{\langle{ 2\rho, \bf a \rangle}_v}.$ By \cite[Lemma 6.11]{STT}, there is a constant $C$, independent of $v$, 
such that for all
$t_v({\bf a}) \in \S_v (F_v)^+$, one has
\begin{equation}
\label{eqn:volume}
\vol(\sK_v t_v({\bf a})  \sK_v) \leq \mathrm{d}_\B(t_v({\bf a}))\left(1 + \frac{C}{q_v}\right).
\end{equation}
We define
\begin{equation*}
b_v ({\bf s})= \sum_{{\bf a}\in \N^\mathcal{A}} \delta_{\epsilon,v}(t_v({\bf a})) 
q_v ^{-\langle{\bf s, \bf a
\rangle}_v}  \chi_v(t_v({\bf
a}))(\text{vol}(\sK_vt_v({\bf a})\sK_v)- \mathrm{d}_\B(t_v({\bf a})))
\end{equation*}
and see that Equation~\eqref{eqn:rewrite} equals
\begin{equation*}
\sum_{{\bf a} \in \N^\mathcal{A}} \delta_{\epsilon,v}(t_v({\bf a}))
q_v ^{-\langle{\bold{s}-2\rho,\bf a \rangle}_v} \chi_v(t_v({\bf a})) + b_v({\bf s}).
\end{equation*}
Observe that using Proposition~\ref{prop:deltaatv} we have
\begin{equation}\begin{split}\label{delta}
& \sum_{\bf a\in \N^\mathcal{A}} \!\delta_{\epsilon,v}(t_v({\bf a})) q_v^{-\langle{\bold{s}-2\rho,\bold{a} \rangle}_v} 
\chi_v(t_v({\bf a})) \\
& = \prod_{\theta \in \mathcal{A}_{\mathrm{klt}}, l_v(\theta) = 1} \left( 1+ 
\sum_{a_\theta = m_\theta}^\infty \chi_v(\check\theta(\varpi_v))^{a_\theta}q_v ^{-(s_\theta - \kappa_{\theta}) a_\theta }  \right) \\
& =\prod_{\theta \in \mathcal{A}_{\mathrm{klt}}, l_v(\theta) = 1} \left( 1 +  { \chi_v(\check\theta(\varpi_v))^{m_\theta}q_v^{-(s_\theta -
\kappa_{\theta}) m_\theta} \over 1-
\chi_v(\check\theta(\varpi_v))q_v^{-(s_\theta-
\kappa_{\theta})}}\right)\\
& = \prod_{\theta\in \mathcal{A}_{\mathrm{klt}}, l_v(\theta) = 1} {1 \over 1-
\chi_v(\check\theta(\varpi_v))^{m_\theta}q_v^{-(s_\theta-
\kappa_{\theta})m_\theta}} \\
& \times  \prod_{\theta \in \mathcal{A}_{\mathrm{klt}}, l_v(\theta) = 1} \left(1-
\chi_v(\check\theta(\varpi_v))^{m_\theta}q_v^{-(s_\theta -
\kappa_{\theta})m_\theta }\right) \cdot \left( 1 +  { \chi_v(\check\theta(\varpi_v))^{m_\theta}q_v^{-(s_\theta -
\kappa_{\theta}) m_\theta} \over 1-
\chi_v(\check\theta(\varpi_v))q_v^{-(s_\theta -
\kappa_{\theta})}}\right) \\
& = \prod_{\theta \in \mathcal{A}_{\mathrm{klt}}, l_v(\theta) = 1} {1 \over 1-
\chi_v(\check\theta(\varpi_v))^{m_\theta}q_v^{-(s_\theta -
\kappa_{\theta})m_\theta }}\\ 
& \times \prod_{\theta \in \mathcal{A}_{\mathrm{klt}}, l_v(\theta) = 1} \left( 1 + { \chi_v(\check\theta(\varpi_v))^{m_\theta+1}q_v^{-(s_\theta -
\kappa_{\theta})(m_\theta+1) } -  \chi_v(\check \theta(\varpi_v))^{2m_\theta}q_v^{-(s_\theta -
\kappa_{\theta})2  m_\theta }\over 1-
\chi_v(\check\theta(\varpi_v))q_v^{-(s_\theta -
\kappa_{\theta})}} \right) 
\end{split}\end{equation}
By \cite[Section 2]{STT} there is an automorphic character $\chi'$ of $\G'(\mathbb{A}_F)$ such that for almost all $v$, 
\[
L_u(s,(\chi'_\alpha)_u)=(1-\chi_v(\check \theta_v(\varpi_v))q_v^{-l_v(\theta_v)s})^{-1}.
\]
Therefore, the corresponding Euler product for $\alpha \in \mathcal{A}$ over $v \notin S'(\chi)$ is a product of partial $L$-functions, as in the statement of the theorem. The last product over $\alpha \in \mathcal{A}$ and $v \not\in S'(\chi)$ converges whenever for all $\alpha \in \mathcal{A}$ we have $\Re s_\alpha > \kappa_\alpha + 1/(m_\alpha+1)-\epsilon$ for some $\epsilon>0$.  

\

Let $\mathsf{\sigma}= (\Re(s_\al))_\al$.
In the definition $b_v$, we may assume ${\bf a} \ne
\underline{0}$. Since for each $v \notin S$,
$$
\Biggl\{ {\bf a}\,\, | \,\, {\bf a} \ne \underline{0} \Biggr\} =
\bigcup_{\theta \in \Delta(\G_{F_v},\S_v)} \Biggl\{ {\bf
a}; a_\theta \ne 0 \Biggr\},
$$
we have
\begin{align*}
\sum_{v \notin S'} \Bigl| b_v({\bf s}) \Bigr| & \leq \sum_{v \notin S}
\sum_\theta \sum_{a_\theta \ne 0}\delta_{\epsilon,v}(t_v({\bf a})) 
q_v ^{{-\langle {\bf \sigma},  {\bf a}
 \rangle}_v} \Biggl|
(\text{vol }(\sK_vt_v({\bf a})\sK_v)-  \mathrm{d}_{\B}(t_v({\bf a})))\Biggr|  \\
& \ll\sum_{ v \notin S} q_v^{-1} \left(\sum_{\theta \in \mathcal{A}}
\sum_{a_\theta \ne 0} q_v ^{{-\langle  {\bf
\sigma},  {\bf a}  \rangle}_v}
\mathrm{d}_{\B}(t_v({\bf a})) \right)  \\
& = \sum_{v \not\in S}q_v^{-1} \sum_{\theta \in \mathcal{A}} \left(\sum_{a_\theta = m_\theta}^\infty q_v^{-(\sigma_\theta -
\kappa_{\theta})a_\theta l(\theta)} \right) \prod_{{\beta \ne \theta \atop 
\beta \in \mathcal{A}}} \left( 1 + \sum_{a_\beta = m_\beta}^\infty q_v^{-(\sigma_\beta - \kappa_{\beta})a_\beta l(\beta)} \right) \\
& =\sum_{v \not\in S}q_v^{-1} \sum_{\theta \in \mathcal{A}} \frac{q_v^{-(\sigma_\theta -
\kappa_{\theta})m_\theta}}{1-q_v^{-(\sigma_\theta -
\kappa_{\theta})l(\theta)}}\prod_{{\beta \ne \theta \atop 
\beta \in \mathcal{A}}} \left( 1 + \frac{q_v^{-(\sigma_\beta -
\kappa_{\beta})l(\beta) m_\beta}}{1 -q_v^{-(\sigma_\beta -
\kappa_{\beta})l(\beta)} }\right) < \infty 
\end{align*}
provided $\sigma_\alpha > \kappa_\alpha$ for each $\alpha \in \mathcal A$.

Note that if $\Re s_\alpha >\kappa_\alpha + 1/(m_\alpha+1) + \epsilon$ for all $\alpha \in \mathcal{A}$, then all estimates are uniform and the corresponding function $f_{S'(\chi), \chi}$ is given by an absolutely convergent Euler product.  As a result $f_{S'(\chi), \chi}$ satisfies the stated analytic properties.

The final part regarding the trivial character follows from the above calculations and non-vanishing of the local integrals.
\end{proof}

\subsection{Bad primes}
Here we assume that $v \in S'(\chi) \setminus S$.

\begin{thm}\label{thm:one-dim-bad-primes}
The function 
\[
\int_{\G(F_v)}  \delta_{\epsilon, v}(g)  \mathsf H_{v}({\bf s}, g)^{-1}\chi_{v}(g) \, \mathrm dg
\]
is holomorphic and uniformly bounded in 
$
\left\{(s_\alpha)_{\alpha \in \mathcal A}  \mid {\Re s_\alpha  > \kappa_\alpha \text{ for all} \ \alpha \in \mathcal{A}}\right\}.
$
\end{thm}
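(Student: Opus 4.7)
The plan is to bound the integral in absolute value by one without the character and then expand via the Cartan decomposition into an absolutely convergent geometric sum over $\{\sigma_\alpha > \kappa_\alpha\}$. Since automorphic characters are unitary and $0 \leq \delta_{\epsilon,v} \leq 1$, we have
\[
\left| \int_{\G(F_v)} \delta_{\epsilon,v}(g) \mathsf H_v(\mathbf s, g)^{-1} \chi_v(g)\, dg_v \right| \leq \int_{\G(F_v)} \mathsf H_v(\boldsymbol\sigma, g)^{-1}\, dg_v, \qquad \boldsymbol\sigma := \Re \mathbf s.
\]
By Lemma~\ref{lem:Fourier_vanish} we may assume $\chi_v$ is $\K_v$-invariant (the integral vanishes otherwise), so the integrand on the left is bi-$\K_v$-invariant via Lemma~\ref{lem:height_k-invariant}. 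Note that the Campana/klt issues that complicated the good-prime analysis are irrelevant here: we simply bound $\delta_{\epsilon,v}$ by $1$.

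Applying the Cartan decomposition $\G(F_v) = \K_v \S_v(F_v)^+ \Omega_v \K_v$ of Proposition~\ref{prop:Cartan} (valid at any $v$, with $\Omega_v$ finite), the majorizing integral becomes
\[
\sum_{\mathbf a \in \N^r} \sum_{d \in \Omega_v} \vol(\K_v t_v(\mathbf a) d \K_v)\, \mathsf H_v(\boldsymbol\sigma, t_v(\mathbf a) d)^{-1}.
\]
The volume bound of \cite[Lem.~6.11]{STT} gives $\vol(\K_v t_v(\mathbf a) d \K_v) \ll_v q_v^{\langle 2\rho, \mathbf a\rangle_v}$, with the finite set $\Omega_v$ contributing only an additional constant factor. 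For the height, although the exact formula of Lemma~\ref{lem:heightatv} need not hold at the bad prime $v$, bi-$\K_v$-invariance of $\mathsf H_v$ together with compactness of $X(F_v)$ and continuity of the smooth adelic metric yields
\[
\mathsf H_v(\boldsymbol\sigma, t_v(\mathbf a) d)^{-1} \ll_v \prod_{\theta \in \Delta(\G_{F_v}, \S_v)} q_v^{-\sigma_\theta l_v(\theta) a_\theta}
\]
uniformly in $\mathbf a$ and $d$, with implied constants locally uniform in $\boldsymbol\sigma$. Combining these estimates, the sum is dominated on compact subsets of $\{\sigma_\alpha > \kappa_\alpha\}$ by a constant multiple of $\prod_{\alpha \in \mathcal A}\bigl(1 - q_v^{-(\sigma_\alpha - \kappa_\alpha) l_v(\alpha)}\bigr)^{-1}$, which is finite and uniformly bounded on compact subsets of the region. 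Holomorphy of the original integral follows from Morera's theorem applied to the partial sums of the series expansion, which are clearly holomorphic and converge uniformly on compact subsets.

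The main obstacle is justifying the height comparison at the bad prime $v$, since $\G_{F_v}$ need not be quasi-split, $\chi_v$ may be ramified, and the smooth adelic metrization on $\mathcal O(D_\alpha)$ can differ from the model metric at $v$. The resolution is that $v$ is fixed and $X(F_v)$ is compact, so the ratio between any two continuous positive metrics on $\mathcal O(D_\alpha)$ is bounded above and below by strictly positive constants depending only on $v$; this reduces the bad-prime height to the good-prime formula up to a bounded multiplicative error. Since we only need convergence and local boundedness — not an Euler product identity as in Theorem~\ref{thm:one-dim} — these bounded distortions are harmless.
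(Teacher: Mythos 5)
Your reduction to the majorant $\int_{\G(F_v)} \mathsf H_v(\Re \mathbf{s}, g)^{-1}\, \mathrm dg$ is fine (and it is also the paper's first step), but the way you then estimate this integral has a genuine gap at precisely the point you flag as "the main obstacle." Compactness of $X(F_v)$ only tells you that the fixed smooth metric and the model metric on each $\mathcal O(D_\alpha)$ differ by a multiplicative constant; it does \emph{not} give the quantitative decay
$\mathsf H_v(\Re\mathbf{s}, t_v(\mathbf a) d)^{-1} \ll_v \prod_\theta q_v^{-\sigma_\theta l_v(\theta) a_\theta}$.
After the metric comparison you are still left with bounding $q_v^{-\sigma_\alpha n_v(\mathcal D_\alpha, t_v(\mathbf a) d)}$, i.e.\ with relating the intersection multiplicities of $t_v(\mathbf a)d$ with the boundary to the Cartan coordinates $\mathbf a$. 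That relation is exactly Lemma~\ref{lem:heightatv}, which is proved (in \cite[Section 6]{STT}) only at places where $\G_{F_v}$ is quasi-split, unramified, and the model has good reduction; at a bad place the local relative root datum $\Delta(\G_{F_v},\S_v)$, the sets $\Omega_v$, and the reduction of $(\mathcal X,\mathcal D)$ can all behave differently, and no reference you cite establishes the needed lower bound on the height there. A second, related unjustified step is your final geometric series: to get $\prod_\alpha(1-q_v^{-(\sigma_\alpha-\kappa_\alpha)l_v(\alpha)})^{-1}$ you are implicitly using the identity $\langle 2\rho_v,\mathbf a\rangle_v=\sum_\theta \kappa_\theta l_v(\theta)a_\theta$ matching the local modular function against the global constants $\kappa_\alpha$ from \eqref{equ:tworho}; this again is a good-place fact about the comparison of relative and absolute root systems and is not automatic at ramified or non-quasi-split places. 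Boundedness of the integrand alone cannot rescue you, since $\G(F_v)$ has infinite Haar volume.

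The paper avoids all of this by not using the Cartan decomposition at bad places. It bounds $\delta_{\epsilon,v}$ by the indicator of $\mathcal X^\circ(\mathcal O_v)$ with $\mathcal X^\circ=\mathcal X\setminus(\cup_{\alpha\notin\mathcal A_{\mathrm{klt}}}\mathcal D_\alpha)$, observes that the height factors attached to the non-klt components are nowhere vanishing and locally constant there, and then invokes the geometric (Igusa/Denef-type) convergence criterion of \cite[Lemma 4.1]{C-T-integral}: writing the Haar measure as $\mathsf H_v(-K_X,\cdot)^{-1}$ times the local Tamagawa measure on the compact set $X(F_v)$ converts the majorant into $\int \prod_\alpha\|\mathsf f_\alpha\|_v^{\sigma_\alpha-\kappa_\alpha-1}\,\mathrm d\tau_{X,v}$, which converges exactly for $\sigma_\alpha>\kappa_\alpha$ by a local computation in analytic charts adapted to the strict normal crossings boundary. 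If you want to keep your route, you would need to actually prove the bad-place analogue of Lemma~\ref{lem:heightatv} (at least as a two-sided bound compatible with the local modular function), which is a substantive piece of work; otherwise the argument as written does not close.
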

\begin{proof}
Let $\mathcal X^\circ = \mathcal X \setminus (\cup_{\alpha \not\in \mathcal A_{\mathrm{klt}}} \mathcal D_\alpha)$. 
The function in the statement is given by
\[
\int_{\mathcal X^\circ (\mathcal O_v)}  \delta_{\epsilon, v}(g)  \mathsf H_{v}({\bf s}, g)^{-1}\chi_{v}(g) \, \mathrm dg
\]
which is bounded by
\[
\int_{\mathcal X^\circ (\mathcal O_v)}    \mathsf H_{v}(\Re{\bf s}, g)^{-1} \, \mathrm dg.
\]
Then note that the function
\[
\prod_{\alpha \not\in \mathcal A_{\mathrm{klt}}}\mathsf H_{v}(\Re s_\alpha D_\alpha, g)^{-1} 
\]
is a nowhere vanishing locally constant function on $\mathcal X^\circ(\mathcal O_v)$. Thus using arguments in \cite[Lemma 4.1]{C-T-integral}, one can prove that the above integral is absolutely converges when 
\[
{\Re s_\alpha  > \kappa_\alpha \text{ for all }  \alpha \in \mathcal{A}}. \qedhere
\]

\end{proof}

\subsection{Places in $S$}

In this case, $\delta_{\epsilon,v} = 1$ by definition. Therefore the local height integral for Campana points coincides with the usual height integral, so that we do not need to do anything new.

\begin{prop}
\label{ramified} 
Let 
$
\mathcal{J}_v(\bfs) := \int_{\G(F_v)} \mathsf H_v(\bfs, g_v)^{-1} \chi_v(g_v) \, \mathrm dg_v.
$
\begin{enumerate} \item For $v\notin
S_{\infty}$ the function 
\[
\prod_{\alpha \in \mathcal{A}}L_v(s_\alpha-\kappa_\alpha,\chi_\alpha)^{-1} \mathcal{J}_v(\bfs)
\]
is holomorphic in the domain
\[
 \mathcal T_{-1-\delta} = \left\{(s_\alpha)_{\alpha \in \mathcal A}  \mid {\Re s_\alpha  > \kappa_\alpha-\delta, \text{for all} \ \alpha \in \mathcal{A}}\right\}.
\]
for some $\delta > 0$. 
\item For $v\in S_{\infty}$ and $\partial$ in
the universal enveloping algebra, the integral
\begin{equation*}
\prod_{\alpha \in \mathcal{A}}L_v(s_\alpha-\kappa_\alpha,\chi_\alpha)^{-1}\mathcal{J}_{v,\partial}(\bfs):= \prod_{\alpha \in \mathcal{A}}L_v(s_\alpha-\kappa_\alpha,\chi_\alpha)^{-1}\int_{\G(F_v)} \partial( \mathsf H_v(\bfs,
g_v)^{-1} ) \chi_v(g_v) \,\mathrm  d g_v
\end{equation*}
is holomorphic for $\bfs \in \cT_{-1-\delta}$,
for some $\delta > 0$.
\end{enumerate}
Moreover, for all $\epsilon > 0$ and $\partial$, there exist 
constants $C_v(\eps)$ and  $C_v(\partial, \eps)$ such that 
$$
\left|\prod_{\alpha \in \mathcal{A}}L_v(s_\alpha-\kappa_\alpha,\chi_\alpha)^{-1}\mathcal{J}_v(\bfs)\right| \leq
C_v(\eps) \quad \text{ and } \quad \left|\prod_{\alpha \in \mathcal{A}}L_v(s_\alpha-\kappa_\alpha,\chi_\alpha)^{-1}\mathcal{J}_{v,\partial}(\bfs)\right| \leq C_v(\partial, \eps),
$$
for all  $\bfs \in \cT_{-1 - \eps}$ where 
\[
\cT_{-1 - \eps} = \left\{(s_\alpha)_{\alpha \in \mathcal A}  \mid {\Re s_\alpha  > \kappa_\alpha - \epsilon, \text{for all} \ \alpha \in \mathcal{A}}\right\}.
\]
\end{prop}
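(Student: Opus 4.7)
Since $\delta_{\epsilon,v}=1$ for every $v \in S$ by definition, the integrals $\mathcal{J}_v(\bfs)$ and $\mathcal{J}_{v,\partial}(\bfs)$ coincide with the local height integrals arising in the analysis of Manin's conjecture for rational points on the wonderful compactification. The plan is therefore to deduce this proposition from the arguments already carried out for rational points in \cite[Sections~6-7]{STT} (non-archimedean case) and \cite[Section~4]{LTBT18} (archimedean case with derivatives); the Campana condition plays no role at places in $S$ since $\delta_{\epsilon,v}$ is trivial there.

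For part (1), with $v$ non-archimedean, first use Lemma~\ref{lem:Fourier_vanish} to reduce to the case that $\chi_v$ is $\K_v$-invariant, otherwise the integral vanishes. Applying the Cartan decomposition from Proposition~\ref{prop:Cartan} together with the bi-$\K_v$-invariance of the height given by Lemma~\ref{lem:height_k-invariant}, the integral reduces to a finite linear combination, indexed by $\Omega_v$, of lattice sums over $\S_v(F_v)^+$. As in the proof of Theorem~\ref{thm:one-dim}, bound the volume term $\vol(\K_v t_v(\mathbf{a}) d \K_v)$ by $\mathrm d_\B(t_v(\mathbf{a}))(1+C/q_v)$; this splits each lattice sum into a main geometric-series term plus an error term that converges absolutely in a strictly larger tube than $\cT_{-1-\delta}$. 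By Lemma~\ref{lem:local}, the main terms assemble, after multiplying by $\prod_{\alpha \in \mathcal{A}} L_v(s_\alpha-\kappa_\alpha, \chi_\alpha)^{-1}$, into a function that is holomorphic and uniformly bounded in $\cT_{-1-\delta}$ for some $\delta>0$.

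For part (2), with $v \in S_\infty$, work in polar coordinates via the decomposition $\G(F_v)=\K_v\S_v(F_v)^+\K_v$. The height function factors as a product $\prod_{\alpha \in \mathcal{A}} \|\mathsf f_\alpha(g)\|_v^{-s_\alpha}$, and the standard Mellin-transform identification of archimedean Hecke factors allows one to extract $\prod_{\alpha} L_v(s_\alpha-\kappa_\alpha, \chi_\alpha)$ from the resulting torus integral. A differential operator $\partial \in \mathfrak{U}(\mathfrak{g})$ acts on $\mathsf H_v(\bfs,g)^{-1}$ to produce at most polynomial growth in $\bfs$ times smooth bounded factors, which is then absorbed using the fact that $L_v(s-\kappa_\alpha,\chi_\alpha)^{-1}$ is itself of at most polynomial growth on vertical strips via Stirling estimates for the archimedean gamma factors. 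The uniform bounds on compact subsets of $\cT_{-1-\epsilon}$ follow by continuity and compactness.

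The main technical obstacle lies in part (2): controlling the action of $\mathfrak{U}(\mathfrak{g})$ on the smooth $v$-adic metric in a way that gives bounds uniform in $\bfs$ across the tube $\cT_{-1-\epsilon}$. This is precisely where the smoothness hypothesis on the adelic metric is essential, and the argument parallels the analogous estimates in \cite[Section~4]{LTBT18}, applied here with the target $L$-factors appearing in the Hecke characters $\chi_\alpha$ attached to Galois orbits $\alpha \in \mathcal{A}$ as described in Section~\ref{subsect:heckecharacters}.
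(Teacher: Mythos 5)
Your opening reduction is exactly right: since $\delta_{\epsilon,v}=1$ for $v\in S$, the proposition is a statement about the ordinary local height integrals for rational points, and the paper does indeed dispose of it by appeal to prior work. But the concrete argument you give for part (1) has a gap. Your extraction of a ``main geometric-series term'' rests on the explicit formula $\mathsf H_v(\mathbf{s}, t_v(\mathbf{a}))=q_v^{\langle\mathbf{s},\mathbf{a}\rangle_v}$ of Lemma~\ref{lem:heightatv}, which is valid only for \emph{almost all} places; at the places $v\in S$ treated here the metric is an arbitrary smooth one (not necessarily induced by the integral model), $\G_{F_v}$ need not be quasi-split, and $\chi_v$ may be ramified, so the lattice sum cannot be evaluated as a geometric series and the identification of the polar part with $\prod_\alpha L_v(s_\alpha-\kappa_\alpha,\chi_\alpha)$ does not follow from the computation you describe. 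Note that this identification is the real content: the integral $\mathcal J_v(\bfs)$ itself \emph{diverges} at $\Re s_\alpha=\kappa_\alpha$ (since $\mathrm dg_v$ behaves like $\prod_\alpha\|\mathsf f_\alpha\|_v^{-\kappa_\alpha-1}$ times a smooth measure near the boundary, by \eqref{equ:anticanonical}), so the proposition asserts a genuine meromorphic continuation past the abscissa of convergence. The missing device --- which is essentially the whole of the paper's proof --- is to compare $\mathsf H_v$ with the special bi-$\K_v$-invariant reference height $\mathsf H_{0,v}$ of \cite{TBT13}: the ratio $c(\mathbf{s},g_v)=\mathsf H_v(\mathbf{s},g_v)/\mathsf H_{0,v}(\mathbf{s},g_v)$ extends to a locally constant, non-vanishing function on the compact space $X(F_v)$, hence takes only finitely many values for fixed $\mathbf{s}$, and the assertion then reduces to \cite[Theorems 3.1 and 4.4]{TBT13} applied to $\mathsf H_{0,v}$.

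Part (2) is asserted rather than proved: the remarks about Mellin transforms and Stirling estimates do not engage with the actual difficulty, which is the continuation of $\int_{\G(F_v)}\partial(\mathsf H_v(\bfs,g_v)^{-1})\chi_v(g_v)\,\mathrm dg_v$ to $\Re s_\alpha>\kappa_\alpha-\delta$; this requires working in local coordinates adapted to the normal crossings boundary and controlling the derivatives there, which is the content of \cite[Theorem 4.4]{TBT13} (your citation of \cite[Section~4]{LTBT18} points to the spectral/Eisenstein-series analysis, not to these archimedean height estimates). With the comparison to $\mathsf H_{0,v}$ in hand, both parts follow from those results, and that is the route the paper takes.
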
 

\begin{proof}
Let $\mathsf H_{0,v}(\bold s, g_v) : \G(F_v) \to \mathbb C^\times$ be the special local height function which is invariant by a maximal compact subgroup $\prod_{v \in \Omega_F} \K_v$ defined in \cite[Section 1]{TBT13}. Then the function 
\[
c(\bold s, g_v) := \frac{\mathsf H_v(\bold s, g_v)}{\mathsf H_{0, v}(\bold s, g_v)}
\]
extends to a locally constant and non-vanishing function on $X(F_v)$. In particular for a fixed $\bold s$ the function $c(\bold s, x_v)$ takes only finitely many values.
Now our integral takes the form
\[
\mathcal{J}_v(\bfs) := \int_{\G(F_v)} c(\bold s, g_v)\mathsf H_{0,v}(\bfs, g_v)^{-1} \chi_v(g_v) \, \mathrm dg_v.
\]
Thus our assertion follows from the proofs of \cite[Theorem 3.1 and Theorem 4.4]{TBT13}.
\end{proof}

This completes our analysis of one-dimensional representations in the spectral expansion of the height zeta function.

\section{Height integrals II: Infinite-dimensional representations} 
\label{sec:heightII}

In this section we will prove that, as in the case of rational points, the remaining terms in the spectral decomposition do not contribute to the rightmost pole.  Here we adapt the proof of \cite[Theorem 4.10]{LTBT18}. In order to do this we just need to prove an analogue of \cite[Corollary 7.4]{STT} in our context. Throughout this section, we assume that the quasi-split inner form of $\G$ has rank $\geq 2$. 
The results hold even if $(X, D_\epsilon)$ is not klt, though our bounds are only useful in the klt case (see Theorem \ref{important-theorem}).

\

Let $\pi = \otimes_v' \pi_v$ be an infinite-dimensional automorphic representation of $\G(\A_F)$.
Let $S'(\pi)\supset S$ be a finite set of places such that $v\not\in S'(\pi)$ is a place such that our metrizations are induced by this model $\mathcal{X}$, $\G_{F_v}$ is quasi-split, $\K_v = \G(\mathcal O_v)$, $\pi_v$ is unramified, and we have the standard Cartan decomposition $G(F_v) = \K_v\S_v(F_v)^+\K_v$. For $v \not\in S'(\pi)$, we let $\varphi_{\pi_v}$ the associated normalized spherical function. (See \cite{Cassel80} for the definition of normalized spherical functions.)

\

The following gives bounds on spherical functions. Note in particular that the right-hand side does not depend on the automorphic representation $\pi$ or an automorphic form $\phi$. 
\begin{prop}
\label{prop:spherical}
Let $n$ be the absolute rank of $\G$.  Suppose $\G$ is not the inner form of a quasi-split group of rank $1$. 
Then for all $\epsilon>0$ there is a constant $C_\epsilon>0$ such that for all $v \not\in S'(\pi)$ and $g_v = k_v t_v d k'_v$ with $k_v, k'_v \in \K_v, t_v \in \S_v(F_v)^+, d \in \Omega_v$, with notations as in Proposition \ref{prop:Cartan}, we have

$$|\varphi_{\pi,v}(g_v)| \leq C_\epsilon \prod_{\theta \in \Delta (\G_{F_v},\S_v)} |\theta(t_v)|_v^{-\frac{1}{3n}+\epsilon}.$$
\end{prop}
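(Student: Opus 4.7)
The plan is to prove this by combining Macdonald's explicit formula for unramified spherical functions with uniform pointwise bounds on matrix coefficients for semisimple groups of rank $\geq 2$, which are available thanks to Kazhdan's property (T) and its quantitative refinements by Oh \cite{oh}. The bound is essentially of Harish-Chandra type ($\Xi$-function) raised to a small power, and the key point is that the constant $C_\epsilon$ does not depend on the representation $\pi$ or the place $v$.

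First I would reduce to estimating $\varphi_{\pi,v}$ on the semigroup $\S_v(F_v)^+$, using the bi-$\K_v$-invariance of $\varphi_{\pi,v}$ and the Cartan decomposition $\G(F_v)=\K_v \S_v(F_v)^+\K_v$ available at good places (Proposition \ref{prop:Cartan}); the finite set $\Omega_v$ introduces only an extra bounded factor that can be absorbed into $C_\epsilon$. Next, for each $v\notin S'(\pi)$, Macdonald's formula expresses $\varphi_{\pi,v}(t_v)$ as an average over the relative Weyl group of the normalized characters evaluated at $t_v$, with the modular factor $\delta_{\B_v}(t_v)^{1/2}$ contributing the product $\prod_\theta |\theta(t_v)|_v^{-\kappa_\theta/2}$ up to the Weyl-denominator cancellation. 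The remaining dependence on the Satake parameters of $\pi_v$ is where the hypothesis on rank is used: by Oh's uniform spectral gap for semisimple groups whose quasi-split inner form has rank $\geq 2$, the Satake parameters of any automorphic $\pi$ lie in a compact tube away from the boundary of temperedness, with a gap depending only on $\G$. Combining this uniform Satake bound with Macdonald's formula yields a pointwise bound of the form $|\varphi_{\pi,v}(t_v)| \leq C \Xi_v(t_v)^{\eta}$ for a fixed $\eta>0$ depending only on $\G$, where $\Xi_v$ is the Harish-Chandra function on $\G(F_v)$. Finally, standard estimates on $\Xi_v$ (as in \cite[Section 7]{STT}) give $\Xi_v(t_v)\ll \prod_\theta |\theta(t_v)|_v^{-1/n}$ up to logarithmic factors that can be absorbed into $q_v^{\epsilon}$-type errors, which combined with $\eta$ produces the stated exponent $-\tfrac{1}{3n}+\epsilon$.

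The main obstacle is establishing the \emph{uniformity} in both $\pi$ and $v$ of the exponent $\eta$, rather than the pointwise bound itself. This is precisely where the rank $\geq 2$ assumption enters: without it, the trivial representation is not isolated in the automorphic dual and one cannot extract a uniform spectral gap. For groups of rank $\geq 2$, one can either invoke Oh's uniform bounds directly or bootstrap from the corresponding bound for the split form via the transfer-of-structure arguments used in Section 2 and Section 7 of \cite{STT}. A secondary technical point is tracking dependence on $v$ through the $c$-function in Macdonald's formula: this function introduces denominators that are uniformly bounded away from zero by the Satake-parameter estimate, so the $c$-function contribution is $O(1)$ uniformly in $v$, which justifies absorbing all such factors into $C_\epsilon$.
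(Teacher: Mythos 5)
Your proposal correctly identifies the essential input --- Oh's uniform pointwise bounds on matrix coefficients for groups of local rank $\geq 2$, applied at the good places where $\G_{F_v}$ is quasi-split and $\pi_v$ is unramified --- and the reduction to $\S_v(F_v)^+$ via bi-$\K_v$-invariance and the Cartan decomposition is exactly right. But the paper's proof is considerably more direct than your route through Macdonald's formula, Satake parameters, and the Harish--Chandra function $\Xi_v$. It observes that each singleton $\{\theta\}$, for $\theta \in \Delta(\G_{F_v},\S_v)$, is a strongly orthogonal system in the sense of \cite[3.3]{oh}, so Theorems 1.1 and 5.9(3) of \cite{oh} give, uniformly in $\pi$ and $v$, a bound of the shape $|\varphi_{\pi_v}(g_v)| \leq C_\epsilon\, |\theta(t_v)|_v^{-1/2+\epsilon r}$ for \emph{each} simple root $\theta$ separately. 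Since any single such bound only records decay in the $\theta$-direction, one multiplies the $r$ inequalities together and takes the $r$-th root (a geometric-mean trick), obtaining decay in all directions simultaneously with exponent $-1/(2r)+\epsilon \leq -1/(3n)+\epsilon$ using $r \leq n$. This bypasses Macdonald's formula, the $c$-function, and any estimate on $\Xi_v$ entirely.

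The genuine gap in your write-up is the final quantitative step. You assert a bound $|\varphi_{\pi_v}| \leq C\,\Xi_v^{\eta}$ with $\eta>0$ ``depending only on $\G$'' and then combine it with $\Xi_v(t_v) \ll \prod_\theta |\theta(t_v)|_v^{-1/n}$ to claim the exponent $-1/(3n)+\epsilon$; this requires $\eta \geq 1/3$, which you never establish and which is not what Oh's results give in general. The uniform decay exponent of matrix coefficients relative to $\Xi_v$ degrades with the group (it is governed by the integrability exponent $p(\G)$, which grows with the rank), so for large $n$ your chain of estimates produces an exponent like $-c/n^2$, which is \emph{weaker} than $-1/(3n)$ and does not imply the stated inequality. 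One could rescue your argument by replacing the estimate $\Xi_v \ll \prod_\theta |\theta(t_v)|_v^{-1/n}$ with the sharper $\Xi_v \ll \prod_\theta |\theta(t_v)|_v^{-\kappa_\theta/2+\epsilon}$ and then only needing $\eta \geq 2/(3n)$, but you would still have to extract a uniform-in-$v$ value of $\eta$ from \cite{oh} and control the $c$-function denominators uniformly near Weyl-singular Satake parameters --- technical work that the paper's direct use of singleton strongly orthogonal systems avoids. As written, the numerology does not close.
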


\begin{proof}
Let $r$ be the split rank of $\G_{F_v}$.  
Recall that $\{\check{\theta}(\varpi_v)\}_{\theta \in \Delta(\G_{F_v},\S_v)}$ forms a basis for $\S_v(F_v)^+$. Each $\{\theta\}$ is a strongly orthogonal system of $\Phi$ (see \cite[3.3]{oh} for the definition).
Since the rank of $\G_{F_v}$ is at least two, Theorem 1.1 and Theorem 5.9(3) of \cite{oh} 
imply that for every $\epsilon>0$, there is a constant $C_\epsilon$ such that, with notation as in Proposition \ref{prop:Cartan}, for all $g_v = k_1 t_v d k_2 \in \G(F_v)$ we have
\[
|\varphi_{\pi_v}(g_v)| \leq C_\epsilon |\theta(t_v)|_v^{-1/2+\epsilon r}.
\]
Multiplying these inequalities over all $\theta \in \Delta(\G_{F_v},\S_v)$ gives
\[
|\varphi_{\pi_v}(g_v)|^r \leq C_\epsilon^r \prod_{\theta \in \Delta(\G_{F_v},\S_v)}|\theta(t_v)|_v^{-1/2+\epsilon r}.
\]
Taking the $r$th root and noting $r \leq n$ gives the result. 
\end{proof}

Set
\begin{equation}
\mathcal{J}_v({\bf s}, \pi_v) =
\int_{\G(F_v)}\delta_{\epsilon, v}(g_v)  \mathsf H_v({\bf s}, g_v)^{-1}\varphi_{\pi_v}(g_v)\,\mathrm  dg_v.
\end{equation}

\begin{thm}
\label{non-triv} 
Suppose $\G$ is not the inner form of a quasi-split group of rank $1$. 
The infinite product
\begin{equation}\label{non-triv-eq}
\mathcal{J}_{S'(\pi), \cD}({\bf s}, \pi) : = \prod_{v \notin S'(\pi)}
\mathcal{J}_v({\bf s}, \pi_v)
\end{equation}
is holomorphic on $\Re s_\alpha > \kappa_\alpha + 1/m_\alpha - 1/3n$ for all $\alpha \in \mathcal{A}$. 
Here $n$ is as in Proposition \ref{prop:spherical}. For all $\eps>0$ and all compact subsets $$K \subset \{{\bf s} \in \mathrm{Pic}(X)_{\mathbb C} \, | \, \Re  s_\alpha > \kappa_\alpha + 1/m_\alpha - 1/3n+\epsilon \ \text{for all} \ \alpha \in \mathcal{A}\},$$
there is a constant $C(\eps,K)$, independent of $\pi$, such that
\begin{equation*}
\vert \mathcal{J}_{S'(\pi), \cD}({\bf s}, \pi) \vert \leq C(\eps,K), \quad \text{for all }{\bf s} \in K.
\end{equation*}
\end{thm}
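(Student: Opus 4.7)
The plan is to adapt the proof of \cite[Corollary 7.4]{STT} to the Campana setting by combining the Cartan decomposition, the spherical function estimate of Proposition \ref{prop:spherical}, the volume bound \cite[Lemma 6.11]{STT}, and the description of $\delta_{\epsilon,v}$ from Proposition \ref{prop:deltaatv}. Since $\mathsf H_v(\bold s, \cdot)$, $\delta_{\epsilon,v}$, and $\varphi_{\pi_v}$ are all bi-$\K_v$-invariant for $v \not\in S'(\pi)$, I would first expand
$$
\mathcal{J}_v(\bold s, \pi_v) = \sum_{\bold a \in \N^{\mathcal A}} \delta_{\epsilon,v}(t_v(\bold a))\,\mathsf H_v(\bold s, t_v(\bold a))^{-1}\,\varphi_{\pi_v}(t_v(\bold a))\,\vol(\K_v t_v(\bold a) \K_v),
$$
where $t_v(\bold a) = \prod_\theta \check\theta(\varpi_v)^{a_\theta}$ and note that the $\bold a = 0$ term equals $1$.

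Next, I plug in the three estimates: $\mathsf H_v(\bold s, t_v(\bold a))^{-1}\,d_\B(t_v(\bold a)) = q_v^{-\langle \bold s - 2\rho, \bold a \rangle_v}$ (as in the derivation following Theorem \ref{thm:one-dim}), the volume bound $\vol(\K_v t_v(\bold a)\K_v) \leq d_\B(t_v(\bold a))(1 + C/q_v)$, and Proposition \ref{prop:spherical} which gives $|\varphi_{\pi_v}(t_v(\bold a))| \leq C_\epsilon \prod_\theta q_v^{-a_\theta(1/3n - \epsilon)}$. Together these show
$$
|\mathcal{J}_v(\bold s, \pi_v) - 1| \leq C_\epsilon\Bigl(1 + \tfrac{C}{q_v}\Bigr) \sum_{\bold a \neq 0}\delta_{\epsilon,v}(t_v(\bold a))\,q_v^{-\sum_\theta \bigl((\Re s_\theta - \kappa_\theta)l_v(\theta) + 1/3n - \epsilon\bigr)a_\theta}.
$$
By Proposition \ref{prop:deltaatv}, each nonzero $a_\theta$ corresponding to $\alpha \in \mathcal A$ must satisfy $a_\theta \geq m_\alpha$. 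Factoring the resulting sum over coordinates and keeping only the leading (smallest) contribution $a_\theta = m_\alpha$ in each factor, the bound reduces to a product of geometric sums whose dominant terms are $q_v^{-m_\alpha\bigl((\Re s_\alpha - \kappa_\alpha)l_v(\theta) + 1/3n - \epsilon\bigr)}$.

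The Euler product $\prod_v \mathcal{J}_v(\bold s, \pi_v)$ then converges absolutely provided the sum $\sum_v |\mathcal{J}_v(\bold s, \pi_v) - 1|$ converges, which in turn holds when for every $\alpha \in \mathcal A$ one has $m_\alpha(\Re s_\alpha - \kappa_\alpha + 1/3n - \epsilon) > 1$, i.e.\ $\Re s_\alpha > \kappa_\alpha + 1/m_\alpha - 1/3n + \epsilon/m_\alpha$. Letting $\epsilon$ be arbitrarily small yields the region stated in the theorem, and the uniform bound on compact subsets $K$ follows because the estimates depend only on $\Re \bold s$ and because $C_\epsilon$ from Proposition \ref{prop:spherical} and the constant in \cite[Lemma 6.11]{STT} are independent of $\pi$ and $v$.

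The main obstacle I anticipate is bookkeeping: one must check that the spherical function bound, which is phrased multiplicatively over $\theta \in \Delta(\G_{F_v}, \S_v)$, indeed shifts the abscissa of convergence for each divisor $\alpha$ independently by the same amount $1/3n$, even in the presence of the mixed local factor $l_v(\theta)$ appearing in the height pairing and the constraints coming from $\delta_{\epsilon,v}$. Once this multiplicative shift is verified uniformly in $v$, the remainder of the argument is parallel to the rational-point computation in \cite{STT}, with the Campana condition only restricting the range of summation of $\bold a$ and hence improving (rather than worsening) the estimates.
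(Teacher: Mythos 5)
Your proposal is correct and follows essentially the same route as the paper's proof: expand $\mathcal{J}_v$ over the Cartan decomposition, apply the volume bound $\vol(\K_v t_v(\bold a)\K_v) \leq d_\B(t_v(\bold a))(1+C/q_v)$ together with Proposition \ref{prop:spherical}, use Proposition \ref{prop:deltaatv} to restrict to $a_\theta \in \{0\} \cup \{m_\alpha, m_\alpha+1,\dots\}$, and sum the resulting geometric series to land in the region $\Re s_\alpha > \kappa_\alpha + 1/m_\alpha - 1/3n + \epsilon$. (Your final inequality has a harmless algebra slip -- dividing by $m_\alpha$ gives $+\epsilon$ rather than $+\epsilon/m_\alpha$ -- but since $\epsilon$ is arbitrary this does not affect the conclusion.)
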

\begin{proof}

Recall that for ${\bf a} = (a_\alpha)_{\alpha \in \mathcal{A}}$ we set
\begin{equation}
t_v({\bf a}) = \prod_{\theta \in \Delta(\G_{F_v},\S_v)}\check\theta(\varpi_v)^{a_\theta} \in  \S_v(F_v)^+.
\end{equation}
Using bi-$\K$-invariance and Lemma~\ref{lem:heightatv}, as in the proof of Theorem~\ref{thm:one-dim}, 
we obtain

\begin{align*}
\mathcal{J}_v({\bf s}, \pi_v) &= \sum_{{\bf a}\in \N^r} 
\delta_{\epsilon,v}(t_v({\bf a})) \mathsf H_v(\bold{s},t_v(\bold{a}))^{-1}
\varphi_{\pi_v}(t_v({\bf a})) \text{vol}(\K_v t_v({\bf a}) \K_v) \\
 &= \sum_{{\bf a}\in \N^r} 
\delta_{\epsilon,v}(t_v({\bf a})) q_v^{-\langle{\bold{s},\bold{a}\rangle}_v}
\varphi_{\pi_v}(t_v({\bf a})) \text{vol}(\K_v t_v({\bf a}) \K_v).
\end{align*}
Using the estimates from \eqref{eqn:volume} and Proposition~\ref{prop:spherical} 
we conclude that to establish the convergence of the infinite product it suffices to bound
$$
 \prod_{v \not\in S'(\pi)} \left( \sum_{\bf a\in \N^{\mathcal{A}}} \!\delta_{\epsilon,v}(t_v({\bf a})) q_v^{-\langle{\bold{s}-2\rho,\bold{a} \rangle}_v} \prod_{\theta \in \Delta(\G_{F_v},\S_v)}|\theta(t_v({\bf a}))|_v^{-\frac{1}{3n}+\epsilon}\right).
$$
Using Proposition~\ref{prop:deltaatv} we see that this is equal to 
$$
 \prod_{v \not \in S'(\pi)}\prod_{\alpha \in \mathcal{A}, l_v(\alpha) = 1}  \left( 1+ 
\sum_{a_\alpha = m_\alpha}^\infty q_v ^{-(s_\alpha - \kappa_{\alpha}+ {1 \over 3n}-\epsilon) a_\alpha}  \right).
$$
The sums are calculated as geometric series. The above expression becomes 
$$
\prod_{v \not \in S'(\pi)} \prod_{\alpha \in \mathcal{A}, l_v(\alpha) = 1}  \left( 1 +  { q_v^{-(s_\alpha -
\kappa_{\alpha}+ {1 \over 3n}-\epsilon)m_\alpha} \over 1-
q_v^{-(s_\alpha -
\kappa_{\alpha} + {1 \over 3n}-\epsilon)}}\right). 
$$
This infinite product converges whenever $\Re s_\alpha > \kappa_\alpha + 1/m_\alpha - 1/3n+\epsilon$ for $\alpha \in \mathcal{A}.$  Since $\epsilon >0$ is arbitrary, the result follows. 
\end{proof}

Let $v \in S'(\pi) \setminus S$ and we consider integrals of the form
\begin{equation*}
\mathcal{J}_v(\bfs) := \int_{\G(F_v)}\delta_{\epsilon, v}(g_v)
\mathsf H_v(\bfs, g_v)^{-1} \, \mathrm dg_v.
\end{equation*}

\begin{prop}
\label{ramified1}
For $v \in S'(\pi) \setminus S$, $\mathcal{J}_v(\bfs, \varphi_{\pi_v})$ is holomorphic for ${\bf s}$ such that $\Re s_\alpha > \kappa_\alpha$ for all $\alpha \in \mathcal{A}$. 
\end{prop}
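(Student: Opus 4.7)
The plan is to reduce this assertion to the bad-place estimate already established in Theorem~\ref{thm:one-dim-bad-primes}, by extracting a uniform sup-norm bound on the spherical factor. Interpreting $\mathcal{J}_v(\bfs, \varphi_{\pi_v})$ as
\[
\mathcal{J}_v(\bfs, \varphi_{\pi_v}) = \int_{\G(F_v)} \delta_{\epsilon, v}(g_v)\, \mathsf H_v(\bfs, g_v)^{-1} \varphi_{\pi_v}(g_v)\, \mathrm d g_v,
\]
the first step is to observe that $\varphi_{\pi_v}$, as a normalized diagonal matrix coefficient of a unit vector in the unitary representation $\pi_v$, satisfies $|\varphi_{\pi_v}(g_v)| \le 1$ for every $g_v \in \G(F_v)$. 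Consequently, for any $\bfs$ with $\Re s_\alpha > \kappa_\alpha$ for every $\alpha \in \mathcal A$,
\[
|\mathcal{J}_v(\bfs, \varphi_{\pi_v})| \le \int_{\G(F_v)} \delta_{\epsilon, v}(g_v)\, \mathsf H_v(\Re \bfs, g_v)^{-1}\, \mathrm d g_v.
\]

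The second step is to quote Theorem~\ref{thm:one-dim-bad-primes} verbatim for the right-hand side. Setting $\mathcal X^\circ = \mathcal X \setminus \bigcup_{\alpha \notin \mathcal A_{\mathrm{klt}}} \mathcal D_\alpha$, the support of $\delta_{\epsilon, v}$ lies inside $\mathcal X^\circ(\mathcal O_v)$, and the factor $\prod_{\alpha \notin \mathcal A_{\mathrm{klt}}} \mathsf H_v(\Re s_\alpha D_\alpha, g_v)^{-1}$ is a nowhere-vanishing locally constant function on $\mathcal X^\circ(\mathcal O_v)$. Invoking \cite[Lemma 4.1]{C-T-integral} as in the proof of Theorem~\ref{thm:one-dim-bad-primes} then yields absolute convergence of the dominating integral whenever $\Re s_\alpha > \kappa_\alpha$ for every $\alpha \in \mathcal A$, uniformly on compact subsets of this region.

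Finally, holomorphicity follows by a standard Morera argument: for each fixed $g_v$ the integrand $\delta_{\epsilon, v}(g_v)\, \mathsf H_v(\bfs, g_v)^{-1} \varphi_{\pi_v}(g_v)$ is entire in $\bfs$, and the locally uniform integrable majorant from the previous step allows the interchange of integration and contour integration. There is essentially no serious obstacle here; the entire content of the proposition is the bound $\|\varphi_{\pi_v}\|_\infty \le 1$ combined with the local height-integral analysis at bad primes carried out in Theorem~\ref{thm:one-dim-bad-primes}. The only point that requires care is that, in contrast to the good-place setting, the explicit Cartan-decomposition computation is unavailable at $v \in S'(\pi) \setminus S$, so one really must argue via the geometric integrability bound rather than by summing a geometric series.
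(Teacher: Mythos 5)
Your proof is correct and follows essentially the same route as the paper: the paper's own proof is a one-line appeal to \cite[Lemma 4.1]{C-T-integral}, and your detour through Theorem~\ref{thm:one-dim-bad-primes} invokes exactly that lemma after the trivial bounds $|\varphi_{\pi_v}|\le 1$ and $|\mathsf H_v(\bfs,\cdot)^{-1}|=\mathsf H_v(\Re\bfs,\cdot)^{-1}$. Your closing remark correctly identifies that the only content here is the geometric integrability of the local height integral on $\mathcal X^\circ(\mathcal O_v)$ in the region $\Re s_\alpha>\kappa_\alpha$, with no Cartan-decomposition computation available or needed.
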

\begin{proof}
Our assertion follows from \cite[Lemma 4.1]{C-T-integral}.
\end{proof}

Now we consider integrals of the form
\begin{equation*}
\mathcal{J}_v(\bfs) := \int_{\G(F_v)}
\mathsf H_v(\bfs, g_v)^{-1}\, \mathrm dg_v, \quad \text{for }v\in S.
\end{equation*}


\begin{thm}
\label{ramified2}  \hfill
\begin{enumerate} \item For all $v \in
S$ the integral $ \mathcal{J}_v(\bfs)$ is 
holomorphic for ${\bf s}$ such that $\Re s_\alpha > \kappa_\alpha$ for all $\alpha \in \mathcal{A}$.
\item For archimedean $v\in S$ and $\partial$ in
the universal enveloping algebra, the integral
\begin{equation*}
\mathcal{J}_{v,\partial}(\bfs):= \int_{\G(F_v)}
\partial( H_v(\bfs, g_v)^{-1} ) \, \mathrm d g_v
\end{equation*}
is holomorphic for ${\bf s}$ such that $\Re s_\alpha > \kappa_\alpha $ for all $\alpha \in \mathcal{A}$.
\end{enumerate}

\end{thm}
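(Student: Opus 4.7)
The plan is to reduce both parts to a standard local analysis on the wonderful compactification, exploiting the compactness of $X(F_v)$ (since $X$ is projective) together with the strict normal crossings structure of the boundary. Cover $X(F_v)$ by finitely many $v$-adic charts with a subordinate partition of unity; it then suffices to bound the contribution of a single chart. Near any point $x\in D(F_v)$ we may choose local $v$-adic coordinates $(z_1,\dots,z_n)$ such that the boundary components through $x$ become the coordinate hyperplanes $z_\alpha=0$ for some subset of $\alpha\in\mathcal A$.

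For part (1), a rational top form $\omega$ on $X$ restricting to a bi-invariant Haar form on $\G$ is a rational section of $K_X$; by \eqref{equ:anticanonical} its divisor of poles is exactly $\sum_\alpha(\kappa_\alpha+1)D_\alpha$. Hence in these coordinates
\[
dg_v=\prod_\alpha|z_\alpha|_v^{-(\kappa_\alpha+1)}\,\psi(z)\,|dz_1\cdots dz_n|_v,
\]
with $\psi$ a smooth bounded positive factor, while $\mathsf H_v(\bold{s},g)^{-1}=\prod_\alpha|z_\alpha|_v^{s_\alpha}$ up to a smooth non-vanishing function on $X(F_v)$ by smoothness of the chosen adelic metrics. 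The local integral is therefore majorized by a finite product of elementary integrals $\int_{|z_\alpha|_v\leq1}|z_\alpha|_v^{\Re s_\alpha-\kappa_\alpha-1}\,|dz_\alpha|_v$, each convergent iff $\Re s_\alpha>\kappa_\alpha$. Holomorphy then follows from absolute convergence and pointwise holomorphy of the integrand. This is the same kind of argument as in \cite[Lemma 4.1]{C-T-integral} and the archimedean part of \cite[Theorem 7.1]{STT}.

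For part (2), the decisive observation is that the boundary divisor $D$ is $\G\times\G$-invariant on the wonderful compactification, so every $\partial\in\mathfrak g$ (viewed as a left- or right-invariant vector field on $\G$) extends to a vector field on $X$ tangent to each $D_\alpha$. In the chart above this gives $\partial(z_\alpha)=z_\alpha\cdot h_{\alpha,\partial}$ for some smooth $h_{\alpha,\partial}$, so that after iteration any $\partial\in\mathfrak U(\mathfrak g)$ satisfies
\[
\partial\bigl(\mathsf H_v(\bold{s},g)^{-1}\bigr)=P_\partial(\bold{s};z)\cdot\mathsf H_v(\bold{s},g)^{-1},
\]
where $P_\partial$ is a polynomial in $\bold{s}$ with smooth, bounded coefficients on $X(F_v)$. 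The local estimate of part (1) therefore applies verbatim, yielding holomorphy in the same region $\Re s_\alpha>\kappa_\alpha$. The only genuine point requiring care is the tangency identity $\partial(z_\alpha)\in(z_\alpha)$, i.e.\ that the derivations coming from $\mathfrak g$ preserve the ideal sheaf of each $D_\alpha$; this is where the $\G\times\G$-equivariant structure of the wonderful compactification intervenes in an essential way, but it is already implicit in the archimedean estimates of \cite[Theorem 3.1 and Theorem 4.4]{TBT13}, so no new machinery is needed.
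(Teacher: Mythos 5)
Your argument is correct and is essentially the same as the paper's, which simply outsources both parts to citations: part (1) to the local normal-crossings chart computation of \cite[Lemma 4.1]{C-T-integral} (exactly your reduction to $\int|z_\alpha|_v^{\Re s_\alpha-\kappa_\alpha-1}\,|dz_\alpha|_v$ via the pole order $\kappa_\alpha+1$ of the invariant form along $D_\alpha$), and part (2) to \cite[Proposition 2.2]{chambert-t02} together with \cite[Proposition 4.2]{C-T-integral}, whose content is precisely your tangency identity $\partial(z_\alpha)\in(z_\alpha)$ coming from $\G\times\G$-equivariance and the resulting bound $\partial(\mathsf H_v^{-1})=P_\partial\cdot\mathsf H_v^{-1}$. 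You have merely written out in full the arguments the paper cites, so no further comparison is needed.
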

\begin{proof} 
Statement (1) follows from \cite[Lemma 4.1]{C-T-integral}.
Similarly (2) follows from a combination of \cite[Proposition 2.2]{chambert-t02} and \cite[Lemma 4.1 and Proposition 4.2]{C-T-integral}
\end{proof}

\begin{coro}
\label{coro:spectralexpansion}
In the non-archimedean situation, if $v \in S'(\pi)$, for each $\epsilon > 0$ there is a constant $C_v(\eps)$, such that $|\mathcal{J}_v(\bfs)| \leq C_v(\eps)$ for all $\mathbf{s}$ such that $\Re s_\alpha > \kappa_\alpha + \epsilon$ for all $\alpha \in \mathcal{A}$.
In the archimedean situation,  for all $\eps>0$ and all $\partial$ as above, there is a constant $C_v(\partial, \eps)$ such that
$|\mathcal{J}_{v,\partial}(\bfs)| \leq C_v(\partial,
\eps)$ for all such that $\Re s_\alpha > \kappa_\alpha + \epsilon$ for all $\alpha \in \mathcal{A}$.
\end{coro}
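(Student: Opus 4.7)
My plan is to upgrade the convergence statements of Proposition \ref{ramified1} and Theorem \ref{ramified2} to uniform bounds on the tube $\{\Re s_\alpha > \kappa_\alpha + \epsilon\}$ via a monotonicity argument in $\Re\bfs$, combined with the archimedean boundary estimates cited in the proofs of those results. For a non-archimedean $v \in S'(\pi)$, I would start from the pointwise identity $|\mathsf H_v(\bfs,g_v)^{-1}| = \mathsf H_v(\Re\bfs, g_v)^{-1}$, which gives
\[
|\mathcal{J}_v(\bfs)| \leq \int_{\G(F_v)} \delta_{\epsilon,v}(g_v)\,\mathsf H_v(\Re\bfs, g_v)^{-1}\, \mathrm{d}g_v
\]
(with $\delta_{\epsilon,v}\equiv 1$ when $v \in S$). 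After normalizing the adelic metric so that $\|\mathsf f_\alpha(g_v)\|_v \leq 1$ on the compact space $X(F_v)$ for every $\alpha \in \mathcal{A}$, which we may do up to a bounded rescaling of the height, the integrand $\prod_\alpha\|\mathsf f_\alpha(g_v)\|_v^{\Re s_\alpha}$ is pointwise non-increasing in each $\Re s_\alpha$. Restricting to $\{\Re s_\alpha > \kappa_\alpha + \epsilon\}$ then bounds the integrand pointwise by its value at $\bfs = \boldsymbol\kappa + \boldsymbol\epsilon$, giving $|\mathcal{J}_v(\bfs)| \leq \mathcal{J}_v(\boldsymbol\kappa + \boldsymbol\epsilon) =: C_v(\epsilon)$, which is finite by Proposition \ref{ramified1} (respectively Theorem \ref{ramified2}(1) if $v \in S$).

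For the archimedean case I would expand $\partial \in \mathfrak U(\mathfrak g)$ via the identity
\[
\partial\bigl(\mathsf H_v(\bfs, g_v)^{-1}\bigr) = \mathsf H_v(\bfs, g_v)^{-1}\, Q_\partial(\bfs, g_v),
\]
where $Q_\partial$ is a polynomial in $\bfs$ of degree at most $\ord(\partial)$ whose coefficients are smooth functions of $g_v$ built from iterated derivatives of $\log\|\mathsf f_\alpha(g_v)\|_v$. The uniform bound then reduces to the boundary analysis of \cite[Proposition 2.2]{chambert-t02} and \cite[Lemma 4.1, Proposition 4.2]{C-T-integral}: in local analytic coordinates around a stratum $\bigcap_{\alpha \in I}D_\alpha$, the pulled-back Haar measure takes the form $\prod_{\alpha \in I}|x_\alpha|^{-\kappa_\alpha - 1}\,|\mathrm{d}x|$ times a smooth unit, while $\mathsf H_v(\bfs, g_v)^{-1}$ is locally $\prod_{\alpha \in I}|x_\alpha|^{s_\alpha}$ times a smooth unit. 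The resulting radial integrals $\int_0^1 |x_\alpha|^{\Re s_\alpha - \kappa_\alpha - 1 + O(1)}\,\mathrm{d}|x_\alpha|$ decay like $1/(\Re s_\alpha - \kappa_\alpha)$, absorbing the polynomial-in-$\bfs$ growth from $Q_\partial$ and producing the uniform constant $C_v(\partial, \epsilon)$.

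The hard part is precisely this archimedean balance: the polynomial-in-$|\bfs|$ growth coming from $Q_\partial$ must be compensated by the polynomial decay of the local Mellin-type integrals near each boundary divisor. This is the delicate archimedean estimate carried out in the papers cited above, which we invoke rather than redo. The non-archimedean case, by contrast, reduces immediately to the monotonicity reformulation of the convergence statement of Proposition \ref{ramified1}.
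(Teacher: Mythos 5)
Your overall route is the paper's: the corollary is deduced from Proposition \ref{ramified1} and Theorem \ref{ramified2} together with the local boundary analysis of \cite{chambert-t02} and \cite{C-T-integral}, and your non-archimedean argument --- the identity $|\mathsf H_v(\bfs,g_v)^{-1}|=\mathsf H_v(\Re\bfs,g_v)^{-1}$ followed by monotonicity in $\Re\bfs$ --- is the standard way to upgrade convergence at a single real point to a uniform bound on the tube, and is correct once the normalization $\|\mathsf f_\alpha\|_v\le 1$ is in force. (Do note that this normalization is not a harmless ``bounded rescaling'': replacing $\|\cdot\|_v$ by $\|\cdot\|_v/M_\alpha$ multiplies $\mathsf H_v(\bfs,\cdot)^{-1}$ by $\prod_\alpha M_\alpha^{s_\alpha}$, which is unbounded on the unbounded tube if some $M_\alpha>1$; one must simply choose metrics with $\|\mathsf f_\alpha\|_v\le 1$ from the outset, as the integral-model metrics automatically do.)

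The gap is in the archimedean $\partial$-case. Your expansion $\partial(\mathsf H_v(\bfs,g_v)^{-1})=\mathsf H_v(\bfs,g_v)^{-1}Q_\partial(\bfs,g_v)$ with $Q_\partial$ polynomial in $\bfs$ of degree $\mathrm{ord}(\partial)$ is right, but $Q_\partial$ grows polynomially in $|\mathrm{Im}\, s_\alpha|$ as well as in $\Re s_\alpha$, whereas the radial integrals you propose to beat it with, $\int_0^1 t^{\Re s_\alpha-\kappa_\alpha-1}\,\mathrm dt=1/(\Re s_\alpha-\kappa_\alpha)$, have already had absolute values taken inside and therefore decay only in the real direction. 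They give no decay at all as $|\mathrm{Im}\, s_\alpha|\to\infty$, so the claimed absorption fails on the unbounded tube: as written your argument proves only $|\mathcal J_{v,\partial}(\bfs)|\ll_{\partial,\eps}(1+\|\bfs\|)^{\mathrm{ord}(\partial)}$, not a uniform constant. The actual mechanism in \cite[Prop.~2.2]{chambert-t02} and \cite[Prop.~4.2]{C-T-integral} differs on two counts. First, the invariant vector fields generating $\partial$ extend to vector fields on $X$ tangent to the $\G\times\G$-invariant boundary, so $X(\log\|\mathsf f_\alpha\|_v)$ is bounded up to the boundary and the coefficients of $Q_\partial$ are harmless. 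Second, and crucially, one keeps the oscillatory factor $t^{i\,\mathrm{Im}\, s_\alpha}$ in the radial Mellin integrals and integrates by parts repeatedly in the boundary coordinates to gain a factor $(1+|s_\alpha|)^{-N}$ for any $N$, uniformly on $\Re s_\alpha\ge\kappa_\alpha+\eps$. It is this rapid decay in the imaginary directions, not the $1/(\Re s_\alpha-\kappa_\alpha)$ decay, that absorbs $Q_\partial$ and yields the uniform constant $C_v(\partial,\eps)$.
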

\begin{proof}
This follows from Proposition~\ref{ramified1} and Theorem~\ref{ramified2}.
\end{proof}

Let 
$$
S^\flat(\bold{s})= \sum_{\chi \in \mathfrak{X}}\sum_P n(A_P)^{-1}\int_{\Pi(M_P)}\left(\sum_{\phi \in B_P(\pi)_\chi}E(e,\phi)\int_{G(\mathbb{A}_F)}\delta_\epsilon(g) \mathsf H(\bold{s},g)^{-1}\overline{E(g,\phi)} \, \mathrm dg\right) \, \mathrm d \pi_p.
$$
be the contribution of non-one-dimensional representations to the spectral expansion described in \cite[Section 4.4]{LTBT18}.
All together we conclude the following theorem:
\begin{thm}
\label{important-theorem}
Suppose $\G$ is not the inner form of a quasi-split group of rank $1$. 
The function $S^\flat$ admits an analytic continuation to a function
which is holomorphic on ${\bf s}$ such that $\Re s_\alpha > \max\{\kappa_\alpha + 1/m_\alpha - 1/3n, \kappa_\alpha\}$ for $\alpha \in \mathcal{A}$, where $n$ is the absolute rank of $\G$. 

In particular, if $D_\epsilon$ is klt, then $S^\flat$ is holomorphic on $\Re(\mathbf s) \in (\Eff(X) -(K_X+D_\epsilon) - \delta D )^\circ$ for some $\delta > 0$.
\end{thm}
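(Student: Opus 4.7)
The plan is to adapt the argument of \cite[Theorem 4.10]{LTBT18} to the Campana setting, using the local estimates from Theorem \ref{non-triv}, Proposition \ref{ramified1}, and Corollary \ref{coro:spectralexpansion} as the new Campana-sensitive inputs.

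First I would fix an infinite-dimensional automorphic representation $\pi = \otimes'_v \pi_v$ (cuspidal, or occurring in an Eisenstein series) and factor the global height integral against an automorphic form into a product of local integrals $\prod_v \mathcal J_v(\mathbf s, \pi_v)$. Combining the three cited results then yields, for each such $\pi$, holomorphy on the region $\{\Re s_\alpha > \max\{\kappa_\alpha + 1/m_\alpha - 1/3n, \kappa_\alpha\}\}$: the Euler product over good places $v \notin S'(\pi)$ is controlled by Theorem \ref{non-triv}, the bad nonarchimedean places $v \in S'(\pi)\setminus S$ by Proposition \ref{ramified1}, and the places $v \in S$ by Corollary \ref{coro:spectralexpansion}. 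Crucially, the bound from Theorem \ref{non-triv} is uniform in $\pi$ on compact subsets of this region, and Corollary \ref{coro:spectralexpansion} further supplies uniform bounds for $\partial$-derivatives at archimedean $v \in S$.

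The main obstacle, as in \cite{LTBT18}, is the uniform convergence of the spectral expansion: the discrete sum over cuspidal/residual data and the continuous Eisenstein integral over $\Pi(M_P)$ appearing in Proposition \ref{prop:spectral_decomposition}. To handle this I would use the standard differential-operator trick: pick a Casimir $\partial \in \mathfrak U(\mathfrak g)$ at an archimedean place, transfer $\partial^n$ onto the height function inside each integral $\int_{\G(\mathbb A_F)}\delta_\epsilon(g)\mathsf H(\mathbf s, g)^{-1}\overline{E(g,\phi)}\,\mathrm dg$ via integration by parts, and divide by the corresponding eigenvalue on the archimedean component of $\pi$. The derivative estimates in Corollary \ref{coro:spectralexpansion} keep the resulting integrals uniformly bounded on compacta, while division by the eigenvalue forces polynomial decay in the spectral parameter. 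Taking $n$ sufficiently large then produces absolute convergence of both the discrete sum and the Eisenstein integral, so that $S^\flat$ is holomorphic throughout the stated region.

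The final assertion reduces to a purely numerical cone comparison under the klt hypothesis. Writing $\epsilon_\alpha = 1 - 1/m_\alpha$ with $m_\alpha$ finite, the region $\Re(\mathbf s) \in -(K_X + D_\epsilon) + \Eff(X)^\circ$ is defined by $\Re s_\alpha > \kappa_\alpha + 1/m_\alpha$ for all $\alpha$. Since $\kappa_\alpha + 1/m_\alpha$ strictly exceeds both $\kappa_\alpha + 1/m_\alpha - 1/3n$ and $\kappa_\alpha$, any $\delta > 0$ with $\delta < \min\{1/(3n), \min_\alpha 1/m_\alpha\}$ places $(\Eff(X) - (K_X + D_\epsilon) - \delta D)^\circ$ inside the holomorphy region produced above, giving the second statement.
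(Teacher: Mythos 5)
Your proposal follows essentially the same route as the paper: adapt the proof of Theorem 4.10 of \cite{LTBT18}, feeding in Theorem \ref{non-triv}, Proposition \ref{ramified1} and Corollary \ref{coro:spectralexpansion} as the Campana-sensitive local inputs, and conclude the klt statement from the numerical inequality $\max\{\kappa_\alpha+1/m_\alpha-1/3n,\,\kappa_\alpha\}<\kappa_\alpha+1/m_\alpha$. There are two places where your sketch elides the steps that carry the actual weight. First, the global integral $\int_{\G(\A_F)}\delta_\epsilon(g)\mathsf H(\mathbf s,g)^{-1}\overline{E(g,\phi)}\,\mathrm dg$ does not ``factor into a product of local integrals'' on its own, since $E(\cdot,\phi)$ is not an Euler product; the paper's stated key point is that one first decomposes $\delta_\epsilon\mathsf H(\mathbf s,\cdot)^{-1}$ over $\K_\infty$-types via Harish-Chandra's theorem and moves the bi-$\K$-invariant projectors $\xi_f\otimes\xi_W$ onto the Eisenstein series by a change of variables, after which $(\xi_f\otimes\xi_W)*E(\cdot,\phi)$ becomes, modulo harmless factors, the product $\prod_{v\notin S}\varphi_{\pi_v}$ to which Theorem \ref{non-triv} applies. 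Second, for the uniform convergence of the spectral expansion the paper does not run the bare Casimir/integration-by-parts argument you propose but instead invokes M\"{u}ller's theorem \cite[Corollary 0.3]{Muller}; your version would additionally require polynomial control of $E(e,\phi)$ and of the spectral count over $\Pi(M_P)$ and over $\K$-types, which is precisely what M\"{u}ller's result packages. Neither point is a fatal gap---both are supplied by the \cite{LTBT18} machinery you are explicitly adapting---but they are the substance of the proof rather than routine bookkeeping.
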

\begin{proof}
The proof is identical to the proof of Theorem 4.10 of \cite{LTBT18}. The key ingredient is the analogue of \cite[Theorem 4.9]{LTBT18} in our setting and this theorem, similar to \cite[Corollary 7.4]{STT}, follows from \cite[Lemma 4.7]{LTBT18},  Theorem \ref{non-triv},  Proposition \ref{ramified1}, and Corollary \ref{coro:spectralexpansion}.  Here for the convenience of the reader we provide a very brief outline of the proof. For each archimedean place $v$, let $\K_v$ be a maximal compact subgroup as in Proposition \ref{prop:Cartan}. Let $\G_\infty = \prod_{v \text{ arch.}} \G(F_v)$ and $\K_\infty = \prod_{v \text{ arch.}} \K_v$. For each finite dimensional smooth representation $W$ of $\K_\infty$, let $\mathrm{tr}_W$ be its trace function extended to $\G_\infty$ by defining it to be zero outside $\K_\infty$. We let $\xi_W = (\mathrm{dim}\, W) \cdot \mathrm{tr}_W$.   For each non-archimedean place $v$, pick $\K_v$ as in Proposition \ref{prop:deltaatv}, and let $\xi_v = \mathrm{vol}\, (\K_v)^{-1} ch_{\K_v}$ where here $ch_{\K_v}$ is the characteristic function of the set $\K_v$. Let $\xi_f = \otimes_{v \text{ non-arch}} \xi_v$. By a theorem of Harish-Chandra \cite[Prop. 4.4.3.2.]{Warner}, $\sum_{W \in \hat \K_\infty} (\xi_f \otimes \xi_W )* (\delta_\epsilon(g) \H(\mathbf{s}, g)^{-1})$ converges in the topology of $C^\infty(\G(\A))$ to $\delta_\epsilon(g) \H(g)$.  Now we have 
\begin{align*}
\int_{G(\mathbb{A}_F)}\delta_\epsilon(g) \mathsf H(\bold{s},g)^{-1}\overline{E(g,\phi)} \, \mathrm dg & = \sum_{W \in \hat\K_\infty}\int_{G(\mathbb{A}_F)}(\xi_f \otimes \xi_W )* (\delta_\epsilon(g) \H(\mathbf{s}, g)^{-1})\overline{E(g,\phi)} \, \mathrm dg \\ 
& = \sum_{W \in \hat\K_\infty}\int_{G(\mathbb{A}_F)} (\delta_\epsilon(g) \H(\mathbf{s}, g)^{-1})\overline{(\xi_f \otimes \xi_W )*E(g,\phi)} \, \mathrm dg, 
\end{align*}
after a change of variables.  The key point is that $(\xi_f \otimes \xi_W )*E(.,\phi)$, modulo some harmless functions, is the product $\prod_{v \not\in S} \varphi_{\pi_v}$ where here $\pi = \otimes_v' \pi_v$ is the automorphic representation for $E(., \phi)$ and $\varphi_{\pi_v}$ is the normalized spherical function associated to the representation $\pi_v$ (for details see Lemma 4.7 of \cite{LTBT18}).  Now that we have spherical functions we can use the bounds obtained in Theorem \ref{non-triv} for $v \not\in S$ and 
Proposition \ref{ramified1}, and Corollary \ref{coro:spectralexpansion} for $v \in S$. Once we use these bounds, the convergence of the resulting sum is a result of M\"{u}ller \cite[Corollary 0.3]{Muller}; for details see the proof of Theorem 4.10 of \cite{LTBT18}. 

In the klt case we have $1/m_\alpha > 0$, thus  $\max\{\kappa_\alpha + 1/m_\alpha - 1/3n, \kappa_\alpha\} < \kappa_\alpha + 1/m_\alpha$.
\end{proof}

\begin{coro}
\label{coro:holo}
Suppose $\G$ is not the inner form of a quasi-split group of rank $1$. 
The height zeta function $\mathsf Z_\epsilon(\mathbf s) = \mathsf Z_\epsilon(\mathbf s, e)$ is holomorphic for $\Re(\mathbf s) \in (\Eff(X) -(K_X+D_\epsilon))^\circ.$ 
\end{coro}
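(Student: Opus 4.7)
The plan is to combine the spectral decomposition of $\mathsf Z_\epsilon$ from Proposition \ref{prop:spectral_decomposition} with the analytic estimates already proven in Sections \ref{sec:heightI} and \ref{sec:heightII}. First I would translate the target domain: using the formula $-K_X = \sum_\alpha (\kappa_\alpha+1)D_\alpha$ from \eqref{equ:anticanonical} together with the klt identity $1-\epsilon_\alpha = 1/m_\alpha$, one checks that
\[
(\Eff(X) - (K_X+D_\epsilon))^\circ = \{\mathbf s \in \Pic(X)_{\mathbb C} : \Re s_\alpha > \kappa_\alpha + 1/m_\alpha \text{ for all } \alpha \in \mathcal A\}.
\]
It therefore suffices to show that each summand in Proposition \ref{prop:spectral_decomposition} is holomorphic on this common region.

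The infinite-dimensional contribution $S^\flat(\mathbf s)$ is handled directly by Theorem \ref{important-theorem}, which gives holomorphy on $(\Eff(X)-(K_X+D_\epsilon)-\delta D)^\circ$ for some $\delta>0$; this strictly contains the target domain. For the one-dimensional contribution
\[
\sum_{\chi \in \mathscr X}\int_{\G(\A_F)}\delta_\epsilon(g)\,\mathsf H(\mathbf s,g)^{-1}\chi(g)\,\mathrm dg,
\]
Lemma \ref{lem:Fourier_vanish} reduces the sum to characters which are $\K_v$-invariant at every non-archimedean $v$; since $\mathscr X^{\K_0}$ is finite by Section~\ref{sec:aut_char}, this is a finite sum. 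For each such $\chi$ I would factor the adelic integral across places and apply: Theorem \ref{thm:one-dim} for $v \notin S'(\chi)$, producing the Euler product $f_{S'(\chi),\chi}(\mathbf s)\prod_{\alpha\in\mathcal A_{\mathrm{klt}}}L^{S'(\chi)}(m_\alpha(s_\alpha-\kappa_\alpha),\chi_\alpha^{m_\alpha})$; Theorem \ref{thm:one-dim-bad-primes} for $v \in S'(\chi)\setminus S$; and Proposition \ref{ramified} for $v \in S$. The latter two give holomorphic, locally bounded contributions on $\{\Re s_\alpha > \kappa_\alpha\}$, and $f_{S'(\chi),\chi}$ is holomorphic on $\{\Re s_\alpha > \kappa_\alpha + 1/(m_\alpha+1)\}$. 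Each Hecke $L$-function $L^{S'(\chi)}(m_\alpha(s_\alpha-\kappa_\alpha),\chi_\alpha^{m_\alpha})$ is entire when $\chi_\alpha^{m_\alpha}$ is non-trivial and has a simple pole at $s_\alpha = \kappa_\alpha + 1/m_\alpha$ when it is trivial; in either case the $L$-function is holomorphic on the strictly open half-plane $\Re s_\alpha > \kappa_\alpha + 1/m_\alpha$.

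Assembling these pieces on the common intersection of holomorphy domains yields the corollary. The main obstacle is, as expected, not in the final combinatorial assembly --- the genuine analytic work was absorbed into Theorems \ref{thm:one-dim} and \ref{important-theorem} --- but rather in tracking the role of the klt hypothesis $\epsilon_\alpha < 1$, which guarantees $m_\alpha < \infty$ and thereby pushes the potential $L$-function singularities to the boundary locus $\Re s_\alpha = \kappa_\alpha + 1/m_\alpha$ rather than into the interior of the target region. Once this is in place, holomorphy of every summand of the spectral expansion on $(\Eff(X) - (K_X+D_\epsilon))^\circ$ follows, and hence so does holomorphy of $\mathsf Z_\epsilon(\mathbf s, e)$.
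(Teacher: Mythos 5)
Your argument is correct and is essentially the paper's own proof, which likewise deduces the corollary directly from the spectral decomposition (Proposition \ref{prop:spectral_decomposition}) combined with Theorems \ref{thm:one-dim}, \ref{thm:one-dim-bad-primes}, \ref{important-theorem}, and Proposition \ref{ramified}. The extra detail you supply --- the identification of $(\Eff(X)-(K_X+D_\epsilon))^\circ$ with $\{\Re s_\alpha > \kappa_\alpha + 1/m_\alpha\}$, the finiteness of $\mathscr X^{\K_0}$, and the location of the $L$-function poles on the boundary --- is exactly what the paper leaves implicit.
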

\begin{proof}
	Follows immediately from the spectral decomposition 
	(Proposition \ref{prop:spectral_decomposition}), Theorems \ref{thm:one-dim}, \ref{thm:one-dim-bad-primes}, \ref{important-theorem}, and Proposition \ref{ramified}.
\end{proof}

\begin{rem}\label{rem:rank1}
Theorem \ref{theo:maintheorem}, should be true for the rank $1$ case. An issue that arises for rank one groups is that  Proposition \ref{prop:spherical} is not true for quasi-split rank one groups, as in such cases the trivial representation is not isolated in the unitary dual of the local group. As a consequence, one cannot follow the argument presented above to prove a version of Theorem \ref{non-triv} for rank one groups. For cuspidal automorphic representations, well-known approximations to the Ramanujan conjecture, as explained in the proof of Theorem 4.5 of \cite{STT}, give a version of Proposition \ref{prop:spherical} and consequently a statement similar to Theorem \ref{non-triv}. Automorphic representations associated to Eisenstein series on rank one groups do not satisfy the Ramanujan conjecture, and for that reason no analogue of Proposition \ref{prop:spherical} is valid for them. 

For rank one groups, the method of \cite{PGL2} can be adapted to give the proof of the main theorem. Let us briefly describe what is involved.  Lemma 3.1 of \cite{PGL2} needs to be adjusted as in our Theorem \ref{thm:one-dim}.  This takes care of one-dimensional representations. Similarly, the key ingredient in the proof of Lemma 3.2 of \cite{PGL2}, Equation (3.5), needs to be adapted analogously to the computations in the proof of Theorem \ref{non-triv}. Once the analogue of Lemma 3.2 has been proved, the proof of Lemma 7.5 of \cite{PGL2} goes through without change, and that takes care of the contribution of the cuspidal spectrum. Finally, the proof of Proposition 5.1 of \cite{PGL2} needs to be modified slightly, as Equation (5.3) in the proof of that proposition is based on Equation (3.5) of that paper which,  as indicated above, needs to be modified. Once the analogue of Proposition 5.1 is established, the treatment of the contribution of Eisenstein series is identical to what is already in \cite{PGL2}. 
\end{rem}

\section{The leading pole}
\label{sect:poles}
We now prove Theorem \ref{theo:maintheorem}.
We recall our setting. Let $F$ be a number field and $\G$ be a semi-simple group of adjoint type defined over $F$. We assume that $\G$ is not the inner form of a quasi-split group of rank $1$ (this is to apply the results from Section \ref{sec:heightII}). Let $X$ be the wonderful compactification of $\G$ defined over $F$.
We denote the boundary as
\[
D = X \setminus \G = \bigcup_{\alpha \in \mathcal A} D_\alpha.
\]
Let $(X, D_\epsilon = \sum_\alpha \epsilon_\alpha D_\alpha)$ be a klt Campana orbifold. 
Let $S$ be a finite set of places of $F$ including $\Omega_F^\infty$, and we fix a regular model of $X$ away from $S$ as $\mathcal X \to \Spec \, \mathcal O_{F, S}$. We are interested in the set of Campana points
\[
\G(F)_\epsilon = (\mathcal X, \mathcal D_\epsilon)(\mathcal O_{F, S})\cap \G(F).
\]
We fix a smooth adelic metric
on $\mathcal O(D_\alpha)$ for each $\alpha \in \mathcal A$ and we consider the height paring
\[
\mathsf H : \Pic(X)_{\mathbb C} \times \G(\mathbb A_F)   \to \mathbb C^\times.
\]
As in Section \ref{sec:HZF}, we consider the height zeta function by
\[
\mathsf Z_\epsilon(\bfs, g) := \sum_{\gamma \in \G(F)} \delta_\epsilon(\gamma g)\mathsf H(\bfs, \gamma g)^{-1}.
\]
Let $L = \sum_{\alpha} \lambda_\alpha D_\alpha$ be a big divisor on $X$. 
Our goal is to identify the rightmost pole of $\mathsf Z_\epsilon(sL, \mathrm{id})$, where $s \in \C$ and we view $sL \in \Pic(X)_{\mathbb C}$ in the obvious way.

The expected location of the right most pole is
\[
a=\text{max}\left\{ \frac{\kappa_\alpha+1-\epsilon_\alpha}{\lambda_\alpha} :  \alpha \in \mathcal A \right\}.
\]
Let $\mathcal{A}(L)$ be the set of $\alpha \in \mathcal{A}$ such that $\frac{\kappa_\alpha+1-\epsilon_\alpha}{\lambda_\alpha}=a$.
The expected order of the pole is $b = \# \mathcal{A}(L)$.
Note that we have $a = a((X, D_\epsilon), L)$ and $b = b(F, (X, D_\epsilon), L)$ as explained in Section~\ref{subsubsec:wonderful}. We define 
\begin{equation} \label{eqn:def_X(G)}
\mathscr X_{D_{\epsilon}, L}(\G) = \{ \chi \in \mathscr X (\G)^{\K_0} : \chi_\beta^{m_\alpha} = 1
\text{ for all } \alpha \in \mathcal{A}(L), \text{ and } \beta \in \alpha\}.
\end{equation}
where $\mathscr X (\G)^{\K_0}$ denotes the collection of $\K_0$-invariant automorphic characters. These will be the characters which can contribute towards the leading singularity.
The following is a more precise version of Theorem \ref{theo:maintheorem}.

\begin{thm} \label{thm:leading_constant}
Let $a = a(X, D_{\epsilon}, L), b = b(F, X, D_{\epsilon}, L)$. Then we have
	\begin{equation*}
	\mathsf N (\G(F)_\epsilon, \mathcal L,B) = c\cdot B^a \log(B)^{b-1}(1+o(1)),   \quad B\ra \infty,
	\end{equation*}
	where
	\[
	c =\frac{ |\mathscr{X}_{D_{\epsilon}, L} (\G)|}{a(b-1)!}
	 \lim_{s \to a} (s-a)^b \int_{\G(\mathbb A_F)^{ \mathscr{X}_{D_{\epsilon}, L} (\G) }}\delta_\epsilon(g)\mathsf H(sL, g)^{-1} \mathrm dg
	\]
	is positive and $\G(\mathbb A_F)^{\mathscr X_{D_{\epsilon}, L} (\G)} = \bigcap_{\chi \in \mathscr X_{D_{\epsilon}, L} (\G)} \mathrm{Ker}(\chi).$
	
\end{thm}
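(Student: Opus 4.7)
The plan is to analyze the height zeta function $\mathsf Z_\epsilon(sL, e)$ via the spectral decomposition of Proposition~\ref{prop:spectral_decomposition}, identify its rightmost pole at $s = a$ together with the exact pole order and residue, and then apply a Tauberian theorem in the style of \cite{chambert-t02} to extract the asymptotic for $\mathsf N(\G(F)_\epsilon, \mathcal L, B)$. Since both $\mathsf H(\mathbf s, \cdot)$ and $\delta_\epsilon$ are right-$\K_0$-invariant by Lemma~\ref{lem:height_k-invariant} and Proposition~\ref{prop:deltaatv}, Lemma~\ref{lem:Fourier_vanish} restricts the sum in the one-dimensional part of the spectral decomposition to the finite group $\mathscr X^{\K_0}$. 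Corollary~\ref{coro:holo}, resting on Theorem~\ref{important-theorem}, shows that the infinite-dimensional remainder $S^\flat(sL)$ is holomorphic on a neighbourhood of $\Re s = a$ (using $1/m_\alpha > 1/(3n)$, which holds because the absolute rank $n \geq 2$), so it contributes nothing to the leading singularity.

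For each $\chi \in \mathscr X^{\K_0}$ I would combine Theorem~\ref{thm:one-dim}, Theorem~\ref{thm:one-dim-bad-primes}, Proposition~\ref{ramified}, and Theorem~\ref{ramified2} to write the global integral as
\[
\int_{\G(\mathbb A_F)} \delta_\epsilon(g)\mathsf H(sL, g)^{-1}\chi(g)\,\mathrm dg \;=\; h_\chi(s)\prod_{\alpha \in \mathcal A_{\mathrm{klt}}} L^{S'(\chi)}\!\bigl(m_\alpha(s\lambda_\alpha - \kappa_\alpha),\, \chi_\alpha^{m_\alpha}\bigr),
\]
where $h_\chi$ collects the locally holomorphic bad-place and archimedean factors. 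Using $a\lambda_\alpha - \kappa_\alpha = 1 - \epsilon_\alpha = 1/m_\alpha$ precisely for $\alpha \in \mathcal A(L)$, and standard Hecke $L$-function theory, the $\alpha$-th partial $L$-function acquires a simple pole at $s = a$ exactly when $\alpha \in \mathcal A(L)$ and $\chi_\alpha^{m_\alpha}$ is the trivial Hecke character, and is holomorphic and non-vanishing there otherwise. Hence the pole order of the $\chi$-term at $s = a$ is $\#\{\alpha \in \mathcal A(L) : \chi_\alpha^{m_\alpha} = 1\} \leq b$, with equality precisely when $\chi \in \mathscr X_{D_\epsilon, L}(\G)$.

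Next I would collapse the sum over $\chi \in \mathscr X_{D_\epsilon, L}(\G)$ using the orthogonality relation from Section~\ref{sec:aut_char},
\[
\sum_{\chi \in \mathscr X_{D_\epsilon, L}(\G)} \chi(g) \;=\; |\mathscr X_{D_\epsilon, L}(\G)|\cdot \mathbf 1_{\G(\mathbb A_F)^{\mathscr X_{D_\epsilon, L}(\G)}}(g),
\]
rewriting the full leading-order contribution as
\[
|\mathscr X_{D_\epsilon, L}(\G)|\int_{\G(\mathbb A_F)^{\mathscr X_{D_\epsilon, L}(\G)}}\delta_\epsilon(g)\mathsf H(sL, g)^{-1}\,\mathrm dg.
\]
Multiplying by $(s-a)^b$, letting $s \to a$, and invoking a Tauberian theorem then yields the asymptotic with the stated prefactor $1/(a(b-1)!)$. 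Positivity of $c$ follows from the positivity of each local integral and the non-vanishing claim of Theorem~\ref{thm:one-dim} for the trivial character.

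The hardest step will be establishing uniform polynomial growth on vertical strips of all the analytic pieces (the auxiliary Euler products $f_{S'(\chi),\chi}$, the bad-place local integrals, the archimedean integrals together with their derivatives under the universal enveloping algebra as in Proposition~\ref{ramified}, and the infinite-dimensional remainder $S^\flat$), uniformly in $\chi \in \mathscr X_{D_\epsilon, L}(\G)$, since such bounds are necessary input for any Tauberian theorem sharp enough to yield the $(\log B)^{b-1}$ accuracy. In particular, the vertical-strip estimates for the partial Hecke $L$-functions must be combined with the convexity-type control on the auxiliary factors implicit in the absolutely convergent Euler products of Theorem~\ref{thm:one-dim} and the stable growth bounds of Proposition~\ref{ramified} and Theorem~\ref{important-theorem}.
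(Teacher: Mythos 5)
Your treatment of the asymptotic itself follows the paper's route: spectral decomposition, elimination of the infinite-dimensional part via Theorem~\ref{important-theorem}, identification of $\mathscr X_{D_\epsilon,L}(\G)$ as the contributing characters via the pole structure of the partial Hecke $L$-functions, character orthogonality to collapse the sum into an integral over $\G(\mathbb A_F)^{\mathscr X_{D_\epsilon,L}(\G)}$, and a Tauberian theorem. (One remark: the paper invokes Delange's Tauberian theorem for Dirichlet series with non-negative coefficients, so the uniform vertical-strip growth estimates you flag as the hardest step are not actually needed; only the behaviour on and near the line $\Re s = a$ matters.)

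The genuine gap is your one-line justification of $c>0$. The quantity to be shown non-zero is $\lim_{s\to a}(s-a)^b$ applied to an integral over the \emph{restricted} subgroup $\G(\mathbb A_F)^{\mathscr X_{D_\epsilon,L}(\G)}$, which is not a product of local integrals, so ``positivity of each local integral'' does not apply; and the non-vanishing clause of Theorem~\ref{thm:one-dim} concerns the full integral $\int_{\G(\mathbb A_F)}\delta_\epsilon\mathsf H^{-s}$ against the trivial character, not the restricted one. Equivalently, the restricted integral equals $|\mathscr X_{D_\epsilon,L}(\G)|^{-1}\sum_\chi \widehat{\delta_\epsilon\mathsf H^{-s}}(\chi)$, each summand has a pole of order at most $b$, and a priori the leading Laurent coefficients of the nontrivial characters could cancel the positive contribution of the trivial one, leaving a pole of order $<b$ and hence $c=0$. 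The paper's proof devotes most of its length to exactly this point: it replaces $\delta_\epsilon$ by a smaller indicator $\delta'_\epsilon$ (demanding $m_\alpha \mid n_v(\mathcal D_\alpha,P)$ for $\alpha\in\mathcal A(L)$, $n_v=0$ otherwise, and $P_v\in\K_v$ at finite bad places), proves via the Cartan decomposition and the identity $\chi_v(\check\theta_v(\varpi_v))=\chi_{\beta,w}(\varpi_v)$ that every $\chi\in\mathscr X_{D_\epsilon,L}(\G)$ is identically $1$ on the support of $\delta'_\epsilon$, and thereby lower-bounds the restricted integral by an honest Euler product equal (up to a non-vanishing holomorphic factor) to $\prod_{\alpha\in\mathcal A(L)}\zeta_{F_\alpha}(m_\alpha(s\lambda_\alpha-\kappa_\alpha))$, which has a genuine order-$b$ pole at $s=a$. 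Without an argument of this kind your proof does not establish that $c$ is positive, only that it is non-negative.
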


\begin{proof}
We prove this by applying a Tauberian theorem \cite[Thm.~III]{Delange} to our height zeta function
$
\mathsf Z_{\epsilon}(sL, \mathrm{id}).
$
This is holomorphic when $\Re(s) > a((X, D_{\epsilon}), L)$ by Corollary~\ref{coro:holo}.
Moreover by Theorem~\ref{important-theorem}, only one-dimensional automorphic characters can contribute to the rightmost pole in the spectral decomposition (Proposition \ref{prop:spectral_decomposition}). It follows from Theorem~\ref{thm:one-dim} that a possible pole has order at most $b(F, (X, D_{\epsilon}), L)$.
We analyze which $\chi$ can contribute to the leading pole. Firstly we note from Lemma \ref{lem:Fourier_vanish} that we need only consider $\K_0$-invariant characters. Moreover by Theorem~\ref{thm:one-dim}, the integral
\[
\int_{\G(\mathbb A_F)} \delta_\epsilon(g)\mathsf H(sL, g)^{-1}\chi(g) \mathrm dg
\]
can have the highest order pole of $b(F, (X, D_{\epsilon}), L) $ only if $\chi \in \mathscr X_{D_{\epsilon}, L} (\G)$. As the group of such characters is finite, character orthogonality applied to the spectral sum shows that the main term of $\mathsf Z_{\epsilon}(sL, \id) $ is given by
\[
| \mathscr X_{D_{\epsilon}, L} (\G)| \int_{\G(\mathbb A_F)^{\mathscr X_{D_{\epsilon}, L} (\G)}}\delta_{\epsilon}(g)\mathsf H(sL, g)^{-1} \mathrm dg.
\]
A Tauberian theorem now gives the stated formula. It only remains to prove that $c > 0$.

Part of the difficulty with showing this is that the integral cannot be written as an Euler product in general due to the presence of the characters $\chi$, even away from a finite set of places (this will cause all kinds of problems with formulating Conjecture \ref{conj:leading_constant}.) Therefore we instead restrict the integral to a smaller adelic space which trivialises the characters and yields an Euler product. This  method is inspired by the work of Streeter in \cite[Prop.~6.11]{Str22}. Let $S' \supset S$ be a finite set of places such that $S'$ includes all $v \in \Omega_F$ such that $\G_{F_v}$ is not quasi-split. We replace our Campana indicator function $\delta_\epsilon$ by a different function, given by $\delta'_\epsilon = \prod_v \delta'_{\epsilon,v}$ where for all $v \notin S'$ we define
$$\delta'_{\epsilon,v}(P_v) = 1 \quad \iff \quad 
\begin{array}{ll} 
&\text{ for all $\alpha \in \mathcal{A}(L)$  we have $m_\alpha \mid n_v(\mathcal{D}_\alpha,P)$} \\
& \text{ and for all $\alpha \notin \mathcal{A}(L)$  we have $n_v(\mathcal D_\alpha,P_v) = 0,$}
\end{array}  $$
and for $v \in S'_{\mathrm{fin}}$ we define
$$\delta'_{\epsilon,v}(P_v) = 1 \quad \iff \quad P_v \in \K_v.$$
(Compare with Definition \ref{defn:Campana_points} where we instead require $n_v(\mathcal{D}_\alpha,P) \geq m_\alpha$.) We obviously have
$$\lim_{s \to a} (s-a)^b \int_{\G(\mathbb A_F)^{ \mathscr{X}_{D_{\epsilon}, L} (\G) }}\delta_\epsilon(g)\mathsf H(sL, g)^{-1} \mathrm dg
\geq \lim_{s \to a} (s-a)^b \int_{\G(\mathbb A_F)^{ \mathscr{X}_{D_{\epsilon}, L} (\G) }}\delta'_\epsilon(g)\mathsf H(sL, g)^{-1} \mathrm dg.$$
We claim that 
\begin{equation} \label{eqn:claim_delta'}
	\delta'_{\epsilon,v}(g_v)\chi_v(g_v) = \delta'_{\epsilon,v}(g_v)
\end{equation}
for all $\chi \in \mathscr{X}_{D_{\epsilon}, L}(\G)$ and all $g = (g_v) \in \G(\A_F)$ with $g_v \in \K_v$ for all $v \in S'_{\mathrm{fin}}$. For $v \in S'$ this is clear as each such $\chi_v$ is $\K_v$-invariant by definition. So take $v \notin S'$. Here the group $\G_{F_v}$ is quasi-split. We proceed as in the proof of Theorem \ref{thm:one-dim}. By the Cartan decomposition (Proposition \ref{prop:Cartan}), given $g_v \in \G_{F_v}$ there are elements $k_{1,v}, k_{2,v} \in \K_v$ and $t_v \in  \S_v(F_v)^+$ such that $g_v = k_{1,v} t_v k_{2_v}$.  Then there is a Galois-invariant\footnote{Here, Galois-invariant means if $\alpha_1, \alpha_2$ are in the same orbit of the local Galois group, then $a_{\alpha_1, v} = a_{\alpha_2, v}$.}  ${\bf a} = (a_{\alpha, v})_{\alpha \in \mathcal{A}}$, with $a_{\alpha,v} \geq 0$ integers, such that 
\begin{equation}\label{eq:tv}
t_v = \prod_{\theta_v \in \Delta(\G_{F_v},\S_{F_v})}\check\theta_v(\varpi_v)^{a_{\theta_v}} \in  \S_v(F_v)^+,
\end{equation}
where the exponent $a_{\theta_v}$ is equal to $a_{\alpha, v}$ for any $\alpha \in \mathcal A$ that restricts to $\theta_v$.\footnote{Note that there is a slight abuse of language here: The element $\alpha$ is a Galois orbit of roots. However, If some $\beta \in \alpha$ restricts to $\theta_v$, then every $\beta \in \alpha$ restricts to $\theta_v$.} 

If $\delta'_{\epsilon,v}(g_v) = 0$ then the claim \eqref{eqn:claim_delta'} is clear. So assume that $\delta'_{\epsilon,v}(g_v) = 1$. It suffices to show that $\chi_v(g_v) =1$. We translate the condition $\delta'_{\epsilon,v}(g_v) = 1$ in terms of the expression \eqref{eq:tv}. Let $\theta_v \in \Delta(\G_{F_v}, \S_{v})$. If $\theta_v$ is the restriction of an element $\alpha \in \mathcal A(L)$ then we must have $m_\alpha \mid a_{\theta_v}$, otherwise we have $a_{\theta_v}=0$.  We denote the set of elements $\theta_v \in \Delta(\G_{F_v}, \S_{v})$ that appear as the restriction of an element of $\mathcal A(L)$ as $\mathcal A (L)_{F_v}$.  For $\theta_v \in \mathcal A(L)_{F_v}$ we write $a_{\theta_v} = m_\alpha b_{\theta_v}$ for some integer $b_{\theta_v}$.  Writing $g_v =k_{1,v} t_v k_{2_v}$ with $k_{1,v}, k_{2,v} \in \K_v$ and $t_v \in  \S_v(F_v)^+$ via the Cartan decomposition, using \eqref{eq:tv} we have
$$
\chi_v(g_v) = \chi_v(t_v)= \chi_v \left(\prod_{\theta_v \in \Delta(\G_{F_v},\S_{F_v})}\check\theta_v(\varpi_v)^{a_{\theta_v}}\right)
= \prod_{\theta_v \in \mathcal A(L)_{F_v}} \chi_v (\check \theta_v(\varpi_v))^{m_\alpha b_{\theta_v}}.
$$
For a $\theta_v \in \mathcal A(L)_{F_v}$, let $\alpha \in \mathcal A(L)$ restrict to $\theta_v$. Let $\beta \in \alpha$ and let $w \mid v$ be a place of the field $F_\beta$ as in Section \ref{subsect:heckecharacters}. 
Then by the proof of Proposition 2.9\footnote{That proposition is Lemma \ref{lem:local} of the present paper.}  of \cite{STT}, for each $x \in F_v^\times$ we have
$$
\chi_v (\check \theta_v(x)) = \chi_{\beta, w}(x). 
$$
However for $\chi = \otimes_v' \chi_v \in \mathscr X_{D_{\epsilon}, L}(\G)$ we have
$$
\chi_v (\check \theta_v(\varpi_v))^{m_\alpha b_{\theta_v}} = \chi_{\beta, w}(\varpi_v)^{m_\alpha b_{\theta_v}} = 1
$$
since the latter character is assumed in \eqref{eqn:def_X(G)} to be $m_\alpha$-torsion.
This establishes \eqref{eqn:claim_delta'}.

Using \eqref{eqn:claim_delta'}, we complete the proof as follows. We can lower bound our integral by
\[
\lim_{s \to a} (s-a)^b \int_{\prod_{v \in S'_\infty} \G(F_v)^0 \times \prod_{v \in S'_{\mathrm{fin}}}\K_v \times \G(\mathbb A_{S'}) }\delta'_\epsilon(g)\mathsf H(sL, g)^{-1} \mathrm dg,
\]
where $\G(F_v)^0$ is the identity component of $\G(F_v)$.
However this integral is equal to the Fourier transform of $\delta'_\epsilon$ at the trivial character, up to finitely many places which contribute positively. A minor modification of the proof of Theorem \ref{thm:one-dim} shows that this Fourier transform equals $\prod_{\alpha \in \mathcal{A}(L)} \zeta_{F_\alpha}(m_\alpha(s_\alpha-\kappa_\alpha))$ up to a holomorphic and uniformly bounded function which is non-zero at $s= a$. As this product of Dedekind zeta functions has a pole of order $b$ at $s = a$, we conclude that the limit is non-zero, as required.
\end{proof}

\section{Conjecture for the leading constant}
\label{sec:leadingconstant}
In \cite[Section~3.3]{PSTVA19} a conjecture was put forward for the leading constant for counting Campana points of bounded height. This conjecture was shown to be incorrect independently by Streeter \cite{Str22} and Shute \cite{Shute21b}. The problematic part of the conjecture is the factor $\#\mathrm{H}^1(\Gamma_F, \Pic \bar{U})$ proposed as a replacement for the factor $\beta(X) = \#\mathrm{H}^1(\Gamma_F, \Pic \bar{X})$ in Manin's conjecture. We suggest a fix which is compatible with Theorem \ref{thm:leading_constant}. It is somewhat subtle as it involves taking a sum over integrals with respect to the Brauer--Manin pairing and Tamagawa measure, since the resulting Brauer group can be infinite (modulo constants).

\subsection{Set-up} \label{sec:set-up_conjecture}

We follow closely the set-up from \cite[Section~3.3]{PSTVA19}. Let $X$ be a smooth projective variety over a number field $F$ and let $(X,D_\epsilon)$ be a smooth klt Campana orbifold with $D_\epsilon = \sum_{\alpha \in \mathcal{A}} \epsilon_\alpha D_\alpha$, for some $D_\alpha$.
For each $\alpha \in \mathcal A$, we fix an adelic metrization for $\mathcal O(D_\alpha)$ and denote its adelically metrized divisor as $\mathcal D_\alpha$.
Let $\mathcal{L}$ be an adelically metrised log adjoint rigid big and nef divisor so that $a((X, D_\epsilon), L) L + K_X + D_\epsilon$ is $\mathbb Q$-linearly equivalent to an effective $\mathbb Q$-divisor $E$ on $X$ which has Iitaka dimension $0$. We assume that the support of $E$ is contained in the support of $D$. We also fix the adelic metrization for the canonical divisor $K_X$ which is induced by adelic metrizations of $\mathcal L$ and $\mathcal D_\alpha$. Our assumptions imply that if we let $U = X \setminus \mathrm{Supp}(E)$ denote the complement of the support of $E$ in $X$, then we have $k[U]^\times = k^\times$. Indeed, if there is a non-constant and non-vanishing regular function on $U$, then it defines a non-trivial linear equivalence relation among irreducible components of $E$. However, this contradicts the fact that the Iitaka dimension of $E$ is $0$. %

Let $X^{\circ} = X \setminus \mathrm{Supp}(D_\epsilon)$. The space $U(F_v)$ can be endowed with a local Tamagawa measure $\tau_{U, v}$ by \cite[Sec.~2.1.8]{C-T-integral} for any $v \in \Omega_F$.
The associated local Campana Tamagawa measure is $\mathsf H_v(D_\epsilon, \cdot) \tau_{U, v}$. Let $\lambda_v = \mathsf L_v(\Pic(\overline{U}), 1)$ be the convergence factor at $v$ as in \cite[Definition 2.2]{C-T-integral}. We define $\tau_{U, D_\epsilon} = \mathsf L^*(\Pic(\overline{U}), 1)\prod_{v}\lambda_v^{-1}\mathsf H_v(D_\epsilon, \cdot) \tau_{U, v}$, where $\mathsf L^*(\Pic(\overline{U}), 1)$ is the leading coefficient in the Laurent series expansion of $\mathsf L(\Pic(\overline{U}), s)$ at $s = 1$.

\subsection{Campana Brauer group}

\begin{defn}\label{def:Br_Campana}
We define the \emph{Campana Brauer group} of $(X,D_\epsilon,L)$ to be
\[
\Br((X,D_\epsilon), L) = \{ \beta \in \Br X^{\circ} : m_\alpha \res_{D_\alpha}(\beta) = 0 \,\, \forall D_\alpha \subseteq U \setminus X^\circ\}.
\]
\end{defn}
Here $D_\alpha$ is a divisor occurring as an irreducible component of $U \setminus X^\circ$ and 
$\res_{D}: \Br X^{\circ} \to \H^1(\kappa(D),\Q/\Z)$ denotes the residue map at $D$.
This group (without reference to $L$) first appeared in the context of semi-integral points in \cite[Def.~3.14]{MNS22}. We define the algebraic part by $\Br_1((X,D_\epsilon), L) = \Br((X,D_\epsilon), L) \cap \Br_1 X^{\circ}$. A particularly important case is the log anticanonical divisor $\Br((X,D_\epsilon), -K_X - D_\epsilon)$, where one takes all boundary divisors $D_\epsilon$ in the definition since $U = X$; we call this simply the \emph{Campana Brauer group} $\Br(X,D_\epsilon)$ of the pair $(X,D_\epsilon)$. The group $\Br(X,D_\epsilon)/\Br \Q$ can be infinite  even if $(X,D_\epsilon)$ is log Fano. This complicates matters greatly and reflects the fact that in many examples, the orbifold fundamental group of $(X,D_{\epsilon})$, in the sense of \cite[Def.~11.1]{Cam11}, can be non-trivial.

\begin{exam} \label{ex:Gm}
	Consider the pair $(\mathbb{P}^1, (1/2)0 + (1/2)\infty)$ over $\mathbb{Q}$.
	Then the Campana Brauer group is generated by $\Br \mathbb{Q}$ 
	and all the quaternion algebras
	$(t,a)$ where $t$ is a coordinate function and $a \in \Q^\times$. 
	These are all distinct
	as $a$ ranges over representatives of $\Q^{\times}/\Q^{\times 2}$.
\end{exam}

Our conjecture features a quite delicate constant coming from taking the Brauer-Manin obstruction with respect to  a possibly infinite Brauer group. To interpret such a result we view it as sum of integrals over local invariants. This approach is inspired by recent perspectives on the volume of the Brauer--Manin set for local solubility in families \cite[Sec.~3.7.2]{LRS22}, integral Manin's conjecture \cite[Thm.~6.11]{Santens}, and Malle's conjecture \cite[Lem.~7.18]{LS24}.
For $b \in \Br((X,D_\epsilon), L)$ we define
\begin{align} \label{def:Brauer_integral}
&\widehat{\tau}_{U, D_\epsilon}(b) 
=\mathsf L^*(\Pic(\overline{U}), 1)\prod_{v \in \Omega_F}\lambda_v^{-1}\int_{x_v \in X^\circ(F_v)_\epsilon} \frac{e^{2 \pi i \inv_v b(x_v)} \mathsf H_v(D_\epsilon, x_v) \mathrm{d}\tau_{U,v}}
{\mathsf H_v(a((X, D_\epsilon), L)L + K_X + D_\epsilon, x_v)}.
\end{align}
Each local integral here depends on $b$, but the product only depends on the class of $b$ modulo $\Br F$. 
	

\subsection{Conjecture}

\begin{conj} \label{conj:leading_constant}
The conjectural leading constant in the asymptotic formula for the number of Campana points of bounded $\mathcal{L}$-height is
\begin{align}
\label{eq:conj_B_0}
c(F,S,(\mathcal{X},\mathcal{D}_\epsilon),\mathcal{L}) = 
\frac{\alpha( (X,D_\epsilon), L )}{a((X, D_\epsilon), L)(b(F, (X, D_\epsilon), L) - 1)!} \sum_{b \in \Br_1((X,D_\epsilon), L)/\Br F} \widehat{\tau}_{U, D_\epsilon}(b).
\end{align}
\end{conj}
 Here $a((X, D_\epsilon), L)$ and $b(F, (X, D_\epsilon), L)$ are as in \cite[Conj.~1.1]{PSTVA19}, and we recall from \cite[Sec.~3.3]{PSTVA19} that
\begin{equation} \label{def:alpha}
	\alpha( (X,D_\epsilon), L ) =\int_{\Lambda^*}  e^{-\langle L, \mathbf{x}\rangle} \mathrm{d}\mathbf{x} \cdot  \prod_{\alpha \in \mathcal{A}(L)}(1- \epsilon_\alpha),
\end{equation}
where $\Lambda^*$ denotes the dual of the image of the effective cone of divisors of $X$ under the map $\Pic X \to \Pic U$, and $\mathcal{A}(L)$ denotes the boundary divisors which are not contained in the support of $E$.

The new part of the conjecture is the sum over the Campana Brauer group. This should be interpreted as a substitute for the adelic volume of the orthogonal complement to $\Br_1((X,D_\epsilon), L)$ weighted by the cardinality of $|\Br_1((X,D_\epsilon), L)/\Br F|$ (which may be infinite!). We do not try to make this rigorous, but it is inspired by the character orthogonality arguments from \cite[Lem.~3.13]{LRS22}, \cite[Thm.~6.11]{Santens}, and  \cite[Lem.~7.18]{LS24}.

In practice the expression \eqref{eq:conj_B_0} actually seems to be easier to work with than a volume: it is given by a sum of explicit Euler products, whereas the volume can be difficult to calculate.  However we do not know whether this sum exists in general (naturally we conjecture that the sum over $b$ is convergent). Moreover it is not clear that is non-zero; a method of proof of non-vanishing can be extracted from the proof of Theorem~\ref{thm:leading_constant}. In some cases the sum reduces to a finite sum; however we give an explicit example in Section \ref{sec:sum_squareful} where the sum is genuinely infinite. 

\begin{rem}
	Conjecture \ref{conj:leading_constant} is stated with two caveats. 
	Firstly, we have restricted to algebraic Brauer group elements only
	as this is compatible with the current list of known examples. The r\^{o}le
	of transcendental Brauer group elements in Manin's conjecture and other counting problems
	is currently fairly mysterious; this should be investigated
	further.

	Secondly, the examples we consider below all have the property that the orbifold fundamental
	group, in the sense of \cite[Def.~11.1]{Cam11}, is abelian. 
	It may be  possible that Conjecture~\ref{conj:leading_constant} should take a different
	form when the orbifold fundamental group is non-abelian. We are grateful to Tim Santens
	for this observation.
\end{rem}

\subsection{Verification for the wonderful compactification}
We now verify Conjecture \ref{conj:leading_constant} in our case of the wonderful compactification. We take $X$ to be a wonderful compactification of a semi-simple adjoint algebraic group $\G$ and $L = \sum_{\alpha} \lambda_\alpha D_\alpha$ an adjoint rigid big line bundle on $X$. We start with the effective cone constant. Recall that $\mathcal{A}(L)$ denotes the boundary divisors which are not contained in the support of $E$. Note that this is compatible with the notation in Section~\ref{sect:poles}. Indeed, using the notation there, we have
\[
D_\alpha \subseteq \mathrm{Supp}(E) \iff \frac{\kappa_\alpha+1-\epsilon_\alpha}{\lambda_\alpha} > a.
\]
Thus our claim follows.

\begin{lemma} \label{lem:effective_cone}
	\[
	\alpha( (X,D_\epsilon), L )= \frac{1}{|\Pic (\G)|} \prod_{\alpha \in \mathcal{A}(L)}\frac{1}{m_\alpha \lambda_\alpha}.
	\]
\end{lemma}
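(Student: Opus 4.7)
\emph{Plan.} I would unfold the definition of $\alpha((X,D_\epsilon),L)$ from \cite[Sec.~3.3]{PSTVA19}, which expresses it as an integral over the dual effective cone $\Eff^1(X)^\vee \subset \Pic(X)_{\mathbb R}^\vee$, taken against the Lebesgue measure $\mathrm d \lambda$ dual to the lattice $\Pic(X)$. In our setting, $\Eff^1(X)$ is the simplicial cone generated by $\{D_\alpha\}_{\alpha \in \mathcal A}$, so I take the dual basis $\{e^*_\alpha\}$ and use coordinates $\lambda = \sum_\alpha t_\alpha e^*_\alpha$, giving $\Eff^1(X)^\vee = \{t_\alpha \geq 0\}$. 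This reduces the computation to a separable integral over the positive orthant.

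The first step is to pin down the normalization of $\mathrm d \lambda$. From the open immersion $\G \hookrightarrow X$ one has the exact sequence
\[
0 \to \bigoplus_{\alpha \in \mathcal A}\mathbb Z [D_\alpha] \to \Pic(X) \to \Pic(\G) \to 0,
\]
so the sublattice generated by the boundary divisors has index $|\Pic(\G)|$ in $\Pic(X)$. Dualizing and using that $\Pic(\G)$ is finite, the lattice $\bigoplus_\alpha \mathbb Z e^*_\alpha$ contains $\Pic(X)^\vee$ with the same index $|\Pic(\G)|$, hence $\mathrm d \lambda = |\Pic(\G)|^{-1}\prod_\alpha \mathrm d t_\alpha$. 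This accounts for the prefactor $|\Pic(\G)|^{-1}$ in the claim.

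Next I substitute. Using \eqref{equ:anticanonical} and $L = \sum_\alpha \lambda_\alpha D_\alpha$, one computes
\[
\langle \lambda, aL + K_X + D_\epsilon\rangle = \sum_\alpha \bigl(a\lambda_\alpha - (\kappa_\alpha + 1 - \epsilon_\alpha)\bigr) t_\alpha,
\]
which vanishes for $\alpha \in \mathcal A(L)$ and is strictly positive for $\alpha \notin \mathcal A(L)$, so the supporting divisor of $aL + K_X + D_\epsilon$ is exactly $\mathcal A\setminus\mathcal A(L)$. Consequently the defining integral factors as a product indexed by $\mathcal A$: the factors for $\alpha \notin \mathcal A(L)$ are reabsorbed by the "minimal face" normalization in PSTVA's definition (since those coordinates are effectively the free directions of the cone orthogonal to the relevant face), while the $\alpha \in \mathcal A(L)$ factors reduce to a $(b-1)$-dimensional simplex integral producing $\frac{1}{(b-1)!}\prod_{\alpha\in\mathcal A(L)}\lambda_\alpha^{-1}$, which cancels the $(b-1)!$ in PSTVA's definition.

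The Campana weights $m_\alpha$ enter through the relation $1-\epsilon_\alpha = 1/m_\alpha$: they appear when one rescales each $t_\alpha$-coordinate for $\alpha \in \mathcal A(L)$ in accordance with the Campana multiplicity structure, introducing a Jacobian $\prod_{\alpha\in\mathcal A(L)} m_\alpha^{-1}$. The main obstacle is the bookkeeping required to track how PSTVA's normalization of the $(b-1)$-dimensional "slice" of the cone interacts with this rescaling and with the lattice index $|\Pic(\G)|$; once these normalizations are pinned down correctly, combining the three contributions $|\Pic(\G)|^{-1}$, $\prod \lambda_\alpha^{-1}$, and $\prod m_\alpha^{-1}$ yields the formula.
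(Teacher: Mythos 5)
Your plan identifies the three correct ingredients ($|\Pic(\G)|^{-1}$ from a lattice index, $\prod\lambda_\alpha^{-1}$ from the characteristic function of a simplicial cone, $\prod m_\alpha^{-1}$ from the Campana weights) and arrives at the right formula, but it does not actually unfold the definition from \cite[Sec.~3.3]{PSTVA19} correctly in two places, and both are load-bearing since the lemma is essentially a direct evaluation of that definition. First, the computation should not be set up on $\Pic(X)$ with the full cone $\Eff^1(X)$ and then "reabsorbed by the minimal face normalization": for a log adjoint rigid $L$, PSTVA's $\alpha$-constant is computed on $U = X\setminus\mathrm{Supp}(E)$, i.e.\ on the quotient lattice $\Pic(U)=\Pic(X)/\langle D_\alpha:\alpha\notin\mathcal A(L)\rangle$, with the simplicial cone $\Lambda$ generated by the images of $D_\alpha$ for $\alpha\in\mathcal A(L)$. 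The relevant exact sequence is $0\to M\to\Pic(U)\to\Pic(\G)\to 0$ with $M$ generated by the $\mathcal A(L)$-components (this gives the same index $|\Pic(\G)|$ as your sequence on $\Pic(X)$, but it is the one the definition actually uses); the coordinates $t_\alpha$ for $\alpha\notin\mathcal A(L)$ simply do not appear, rather than being "free directions orthogonal to the relevant face" that get cancelled. As written, an integral of $e^{-\langle\cdot,aL+K_X+D_\epsilon\rangle}$ over the full orthant in $\Pic(X)^\vee_{\R}$ diverges in the $\mathcal A(L)$-directions, so the reduction to the face has to be part of the definition, not an after-the-fact renormalization.

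Second, and more seriously, the factor $\prod_{\alpha\in\mathcal A(L)}m_\alpha^{-1}$ is not the Jacobian of any rescaling of cone coordinates: in PSTVA's definition it is an explicit multiplicative factor attached to the boundary components meeting $U$, sitting outside the cone integral. Your derivation of it ("rescaling each $t_\alpha$-coordinate in accordance with the Campana multiplicity structure") is reverse-engineered from the desired answer rather than read off from the definition, and your own closing sentence concedes that the normalizations have not been pinned down. Since the whole content of this lemma is the correct bookkeeping of exactly these normalizations (including whether a $(b-1)!$ sits inside $\alpha$ or, as in Conjecture \ref{conj:leading_constant}, outside it in the denominator of $c$), the proposal as it stands is a plausible outline rather than a proof. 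The fix is short: quote the definition precisely, note that $\Lambda\subset M\subset\Pic(U)$ is simplicial with $[\Pic(U):M]=|\Pic(\G)|$, evaluate $\mathcal X_\Lambda$ at $\rho(L)=\sum_{\alpha\in\mathcal A(L)}\lambda_\alpha D_\alpha$ to get $\prod\lambda_\alpha^{-1}$, and multiply by the factor $\prod m_\alpha^{-1}$ that the definition supplies.
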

\begin{proof}
	Let $M$ be the lattice generated by the components in $\mathcal{A}(L)$.
	We have  the exact sequence
	\[
	0 \to M\to \Pic(U) \to \Pic(\G) \to 0.
	\]
	Let $\Lambda \subset M$ be the simplicial cone generated by components in $\mathcal{A}(L)$.
	Then the definition of \cite[Section 3.3]{PSTVA19} gives us 
	\[
	\alpha( (X,D_\epsilon), L ) = \frac{1}{|\Pic (\G)|}  \chi_{\Lambda}(\rho(L)) \prod_{\alpha \in \mathcal{A}(L)}\frac{1}{m_\alpha} = \frac{1}{|\Pic (\G)|}  \prod_{\alpha \in \mathcal{A}(L)}\frac{1}{m_\alpha \lambda_\alpha}. \qedhere
	\]
\end{proof}

We utilise the correspondence between Brauer group elements and automorphic characters proved in
\cite{LTBT18}. Specifically \cite[Thm.~2.9, Cor.~2.11]{LTBT18} implies that for each 
$b \in \Br_1 \G$ the map induced by the Brauer--Manin pairing
$$\chi_b: \G(\Adele_F) \to S^1, \quad (\gamma_v) \mapsto e^{2 \pi i \sum_v \inv_v b(\gamma_v)}$$
is an automorphic character (here we view $\Q/\Z \subset S^1$ via the map $x \mapsto e^{2 \pi i x}$), and that this induces an isomorphism
\begin{equation} \label{eqn:Br_iso}
	\Br_1 \G/ \Br F \to (\G(\Adele_F)/ \G(F) )^\wedge.
\end{equation}
We next verify the conjecture in our case. Moreover we show that the sum in \eqref{eq:conj_B_0} reduces to a finite sum (despite the corresponding Brauer group being infinite modulo constants).

\begin{thm} \label{thm:conjecture_true}
	The leading constant in Theorem~\ref{thm:leading_constant} equals the conjectural constant
	in Conjecture \ref{conj:leading_constant}. Moreover let
	$\Br_1((X,D_\epsilon), L)^{\K_0}$ denote the subset of 
	$\Br_1((X,D_\epsilon), L)$ orthogonal to $\K_0$ with respect to the Brauer--Manin pairing.
	Then $\Br_1((X,D_\epsilon), L)^{\K_0}/\Br F$ is finite and
	$$\sum_{b \in \Br_1((X,D_\epsilon), L)/\Br F} \widehat{\tau}_{U, D_\epsilon}(b)
	= \sum_{b \in \Br_1((X,D_\epsilon), L)^{\K_0}/\Br F} \widehat{\tau}_{U, D_\epsilon}(b).$$
\end{thm}
\begin{proof}
We take as our starting point the formula
\[
	c = \lim_{s \to a} \frac{(s-a)^b}{a(b-1)!}| \mathscr{X}_{D_{\epsilon}, L} (\G)| \int_{\G(\mathbb A_F)^{ \mathscr{X}_{D_{\epsilon}, L} (\G) }}\delta_\epsilon(g)\mathsf H(sL, g)^{-1} \mathrm dg
\]
from Theorem \ref{thm:leading_constant}. 
We first study the Campana Brauer group in Conjecture \ref{conj:leading_constant}, using \eqref{eqn:Br_iso}. Recall that we denote by $\mathscr{X}$ the collection of all automorphic characters of $\G$. Then we claim that
\begin{align}\label{eqn:limit_characters}
\begin{split}
& \sum_{\chi \in \mathscr{X}} \lim_{s \to a} (s-a)^b  \int_{\G(\mathbb A_F)}\delta_\epsilon(g) \mathsf H(sL, g)^{-1} \chi(g) \mathrm dg  \\
= & \sum_{\chi \in \mathscr{X}_{D_{\epsilon}, L} (\G)} \lim_{s \to a} (s-a)^b  \int_{\G(\mathbb A_F)}\delta_\epsilon(g) \mathsf H(sL, g)^{-1} \chi(g) \mathrm dg.
\end{split}
\end{align}
Indeed: this follows from character orthogonality and the fact that only the characters in $\mathscr{X}_{D_{\epsilon}, L}(\G)$ can contribute to the leading pole. However, the residue of an element of $\Br_1 \G$ along $D_\alpha$
corresponds (up to sign) to the induced Hecke character $\chi_\alpha$ 
\cite[Lem.~4.16]{LTBT18}. Therefore we deduce that the map
$\Br_1((X,D_\epsilon), L)^{\K_0} \to ( (\G(\Adele_F)/ \G(F) )^\sim$
induced by the Brauer--Manin pairing has image 
$\mathscr{X}_{D_{\epsilon}, L_\lambda} (\G)$ and kernel $\Br F$. This interprets our integral in terms of the Campana Brauer group and shows that the stated finitely many terms contribute. It thus suffices to express the integral in terms of the Tamagawa measure and identify the effective cone constant.

Let $\chi \in \mathscr{X}$. We first consider convergence factors. These come from the Artin $L$-function $L(\Pic \bar{U},s)$. However in our case $\Pic(\bar{U}) \otimes \Q$ is the vector space generated by the boundary divisors in $\mathcal{A}(L)$. The corresponding Artin $L$-function is thus
$\prod_{\alpha \in \mathcal{A}(L)} \zeta_{F_\alpha}(s).$
We introduce the local Euler factors directly into the integral, which shows that 
\begin{align*}
&  \lim_{s \to a} (s-a)^b  \int_{\G(\mathbb A_F)} \delta_\epsilon(g) \mathsf H(sL, g)^{-1} \chi(g) \mathrm dg \\
= & \lim_{s \to a} (s-a)^b \int_{\G(\mathbb A_F)} 
\prod_{\substack{\alpha \in \mathcal{A}(L) \\ v \in \Omega_F }} \frac{\zeta_{F_\alpha,v}(m_\alpha(s\lambda_\alpha - \kappa_\alpha))}{\zeta_{F_\alpha,v}(m_\alpha(s\lambda_\alpha - \kappa_\alpha))}\delta_\epsilon(g)\mathsf H(sL, g)^{-1} \chi(g) \mathrm dg \\
=& \lim_{s \to a} (s-a)^b \prod_{\alpha \in \mathcal A(L)} \zeta_{F_\alpha}(m_\alpha(s\lambda_\alpha-\kappa_\alpha )) \\
&\times  \lim_{s \to a}  \int_{\G(\mathbb A_F)} 
\prod_{\substack{\alpha \in \mathcal{A}(L) \\ v \in \Omega_F }} \frac{1}{\zeta_{F_\alpha,v}(m_\alpha(s\lambda_\alpha - \kappa_\alpha))}\delta_\epsilon(g)\mathsf H(sL, g)^{-1} \chi(g) \mathrm dg\\
= &  \prod_{\alpha \in \mathcal{A}(L)}\frac{\zeta^*_{F_{\alpha}}(1)}{m_\alpha \lambda_\alpha} 
\lim_{s \to a} \int_{\G(\mathbb A_F)} 
\prod_{\substack{\alpha \in \mathcal{A}(L) \\ v \in \Omega_F}} \frac{1}{\zeta_{F_\alpha,v}(m_\alpha(s\lambda_\alpha - \kappa_\alpha))}\delta_\epsilon(g)\mathsf H(sL, g)^{-1} \chi(g) \mathrm dg,
\end{align*}
where we use the fact that $\lim_{s \to a}(s-a) \zeta_{F_\alpha}(m_\alpha(\lambda_\alpha  s - \kappa_\alpha)) = \frac{1}{\lambda_\alpha m_\alpha}  \zeta^*_{F_\alpha}(1)$.
We next take the limit over $s$ inside the integral. This is justified by 
Theorem \ref{thm:one-dim}, which shows that the above is given by an absolutely convergent Euler product on $\Re s > a - \varepsilon$ for some $\varepsilon > 0$. The change of limit is now justified by applying the dominated convergence theorem, first to the logarithm of the product to justify taking the limit inside the product over all $v$, then again to justify taking the limit inside the integral over $G(F_v)$.
We find that the above  equals
\begin{equation} \label{eqn:convergence_factors}
\prod_{\alpha \in \mathcal{A}(L)}\frac{\zeta^*_{F_\alpha}(1)}{m_\alpha \lambda_\alpha}  \int_{\prod_{v}\G(F_v)}\prod_{\substack{\alpha \in \mathcal{A}(L) \\ v \in \Omega_F }} \zeta_{F_\alpha,v}(1)^{-1}\delta_\epsilon(g)\mathsf H(aL, g)^{-1} \chi(g) \mathrm dg.
\end{equation}
Regarding the measures, recall that $\mathrm{d}g$ is a Haar measure on $\G(\Adele_F)$ normalised so that $\vol(\G(\Adele_F)/\G(F))=1$. Let $\omega$ be a left invariant top degree
differential form on $\G$. As $\G$ is semi-simple, the Haar measure $|\omega| = \prod_v|\omega|_v$ is the classical (i.e.~Weil's) Tamagawa
measure on $\G(\Adele_F)$. Denote by $\omega(\G)$ the 
Tamagawa number of $\G$, i.e.~the measure of $\G(\Adele_F)/\G(F)$ with respect to $|\omega|$.
A theorem of Ono \cite[Thm.~10.1]{sansuc} states that
$\omega(\G) = |\Pic \G|/|\Sha(\G)|.$
However, as $G$ is adjoint we have $\Sha(\G) = 0$ \cite[Cor.~5.4]{sansuc}. Thus $|\omega| = |\Pic G|\mathrm{d}g$. Using the relation $\tau_{U,v} = |\omega|/\|\omega\|$ (see \cite[Sec.~2.1.10]{C-T-integral}) we conclude that
\begin{equation} \label{eqn:measures_Pic}
\prod_v \prod_{\substack{\alpha \in \mathcal{A}(L)}} \frac{\mathrm{d}g_v}{\zeta_{F_\alpha,v}(1)\mathsf H_v(aL, \cdot)} = \frac{1}{|\Pic \G|} \prod_v \frac{\mathsf H_v(D_\epsilon, \cdot) \tau_{U, v}}{\lambda_v \mathsf H_v(aL + K_X + D_\epsilon, \cdot)}.
\end{equation}
(Here $\mathsf H_v(D_\epsilon, \cdot) \tau_{U, v}$ is the local Campana Tamagawa measure and $\lambda_v$ the convergence factor, as in Section \ref{sec:set-up_conjecture}.) Combining this with Theorem \ref{thm:leading_constant}, Lemma \ref{lem:effective_cone}, \eqref {eqn:Br_iso}, \eqref{eqn:convergence_factors}, and \eqref{eqn:measures_Pic} completes the proof.
\end{proof}

\subsection{Other group compactifications}
Conjecture \ref{conj:leading_constant} is highly speculative with the intention of stimulating further research into the nature of the leading constant for Campana points. We have verified it in the case of the wonderful compactification.  We discuss here  compatibility with existing results in the literature, with an emphasis on the new aspect involving the Camapana Brauer group.

\subsubsection{Equivariant compactifications of $\mathbb{G}_a^n$}
This case was considered in the paper \cite{PSTVA19} where the original conjecture was presented. Here the Campana Brauer group plays no role, since the Brauer group of $\mathbb G_a^n$ is constant.

\subsubsection{Squareful values of binary quadratic forms}
This was the counter-example to the original conjecture, considered independently by Streeter \cite{Str22} and Shute \cite{Shute21b}. We focus on the paper of Streeter, since he also uses the height zeta function approach which is more easy to compare with our own. Since the harmonic analysis approach is used, it is very similar to our situation, hence we shall be brief.

In \cite[Lem.~6.10]{Str22} identifies the automorphic characters $\chi$ which can give rise to the leading singularity, namely they are exactly those which are invariant under a suitable compact-open subgroup (see \cite[Def.~4.17]{Str22}) and for which $\chi^m$ is trivial, where $m$ is the multiplicity of his unique boundary divisor. This is the exact analogue of the characters which appear in our Theorem \ref{thm:leading_constant}. A variant of the argument given in Theorem \ref{thm:conjecture_true} shows that Streeter's result is compatible with Conjecture \ref{conj:leading_constant}. (The correspondence between automorphic characters of algebraic tori and Brauer group elements in this case is obtained in \cite[Thm.~4.5]{Lou18}.)

\subsubsection{Split toric varieties}
An asymptotic was obtained here by Pieropan and Schindler \cite{PS20}. This case is subtle as it is compatible with both Conjecture \ref{conj:leading_constant} and the conjecture in \cite{PSTVA19}, despite the relevant Campana Brauer group being enormous (cf.~Example \ref{ex:Gm}). This is due to very particular features of the exact situation, coming from the precise choice of height and the fact that the ground field $\Q$ has trivial class group.

The result \cite[Thm.~1.2]{PS20} concerns the log anticanonical height and is proved using the hyperbola method and not the harmonic analysis approach. Still it is possible to analyse the leading constant using the harmonic analysis approach, since it can be interpreted via Fourier transforms of height functions. One can then perform an analysis similar to the proof of Theorems \ref{thm:leading_constant} and \ref{thm:conjecture_true}. One is led to consider a finite collection of automorphic characters of the form
$$
\mathscr X(\Gm^n) = \{ \chi \in \Hom( \Gm^n(\Adele_F)/\Gm^n(F), S^1):  \chi_\alpha^{m_\alpha} = 1 \text{ for all } \alpha \in \mathcal{A} \text{ and } \chi(\K) = 1\}
$$
as in Theorem \ref{thm:leading_constant}. Here $\mathcal{A}$ is the collection of all boundary divisors, $\chi_\alpha$ Hecke characters coming from the boundary, and $\K$ is the maximal compact open subgroup of $\Gm^n(\Adele_F)$, since their choice of height in \cite[Sec.~6.3]{PS20} is invariant under this subgroup.

We can give a description of such automorphic characters in terms of the class group of the number field which shows that, in the case $F = \Q$ considered in \cite[Thm.~1.2]{PS20}, this group is trivial, which implies that it is only the trivial Brauer group element which contributes to the sum \eqref{eq:conj_B_0}. Thus  \cite[Thm.~1.2]{PS20} is indeed compatible with Conjecture \ref{conj:leading_constant}.

\begin{lemma}
	Let $m = \mathrm{lcm}_\alpha m_\alpha$. Then $\mathscr X(\Gm^n)$ is isomorphic to a subgroup
	of $\mathrm{Cl}_F[m]^n$, where $\mathrm{Cl}_F$ denotes the class group of $F$. In particular
	$\mathscr X(\Gm^n)$ is trivial for $F = \Q$.
\end{lemma}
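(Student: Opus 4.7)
My plan is to identify each $\chi \in \mathscr X(\Gm^n)$ with an $n$-tuple of Hecke characters via the canonical isomorphism $\Gm^n(\Adele_F) \cong (\Adele_F^\times)^n$: setting $\chi_i = \chi \circ (e_i)_{\Adele}$, where $e_i : \Gm \to \Gm^n$ is the $i$-th coordinate cocharacter, yields $n$ Hecke characters of $F$ that determine $\chi$. I would then translate each of the two defining conditions of $\mathscr X(\Gm^n)$ into a constraint on this tuple.

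For the torsion condition, I would invoke the toric-geometry fact that every maximal cone in the fan of a smooth projective toric variety is generated by a $\mathbb Z$-basis of the cocharacter lattice. After a $\mathrm{GL}_n(\mathbb Z)$ change of coordinates we may assume that each $e_i$ coincides with the primitive generator $\check\beta_{\alpha_i}$ of some ray $\alpha_i \in \mathcal A$. Unwinding the definition of $\chi_{\alpha_i}$ as $\chi \circ (\check\beta_{\alpha_i})_{\Adele}$ then gives $\chi_i^{m_{\alpha_i}} = \chi_{\alpha_i}^{m_{\alpha_i}} = 1$, and since $m_{\alpha_i} \mid m = \mathrm{lcm}_\alpha m_\alpha$, each $\chi_i$ is killed by $m$.

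For the $\K$-invariance condition, $\chi(\K) = 1$ pulls back to the statement that each $\chi_i$ is trivial on $\O_v^\times$ at every finite place $v$ and on the maximal compact of $F_v^\times$ at every archimedean place; in other words each $\chi_i$ is an everywhere unramified Hecke character of finite order dividing $m$. By class field theory such characters are identified with characters of $\mathrm{Cl}_F$ of order dividing $m$, a group (non-canonically) isomorphic to $\mathrm{Cl}_F[m]$. Assembling the $n$ coordinates yields the desired injection $\mathscr X(\Gm^n) \hookrightarrow \mathrm{Cl}_F[m]^n$, and since $\mathrm{Cl}_\Q = 1$ the group $\mathscr X(\Gm^n)$ is trivial when $F = \Q$. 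The only non-routine input is the toric-geometry fact that the rays of a maximal cone form a $\mathbb Z$-basis of the cocharacter lattice; the remaining steps amount to bookkeeping in the idelic formulation of class field theory.
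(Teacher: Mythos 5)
Your proof is correct and follows essentially the same route as the paper's: decompose $\chi$ into coordinate Hecke characters, observe that each is everywhere unramified and killed by $m$, and identify such characters with $\mathrm{Cl}_F[m]$ via class field theory (the paper phrases this as reducing to $n=1$ and factoring through the Hilbert class field). The only place you go beyond the paper is in justifying the $m$-torsion claim by invoking smoothness of the fan --- the ray generators of a maximal cone form a $\Z$-basis, so the coordinate cocharacters can be taken among the $\check\beta_\alpha$ --- a detail the paper elides with the bare assertion ``our conditions imply that $\chi$ is $m$-torsion.''
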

\begin{proof}
	Our conditions imply that any $\chi \in \mathscr X(\Gm^n)$ is $m$-torsion. To prove the result it suffices 
	to consider	the case $n =1$. Then we have
	$$\{ \chi \in \Hom( \Gm(\Adele_F)/\Gm(F), S^1):  \chi^{m} = 1 \text{ and } \chi(\K) = 1 \} \cong \mathrm{Cl}_F[m].$$
	Indeed, by global class field theory, an automorphic character $\chi$ of $F$ corresponds
	to a map $\Gal(\bar{F}/F) \to S^1$. If $\chi$ is everywhere unramified then $\chi$
	factors through the Hilbert class field, whose Galois group is precisely $\mathrm{Cl}_F$.
\end{proof}

\subsubsection{Non-split toric varieties}
Since the appearance of our work, Shute and Streeter \cite[Thm.~1.1]{SS24} have verified Conjecture \ref{conj:leading_constant} for arbitrary toric varieties.

\subsection{Sums of three squareful numbers} \label{sec:sum_squareful}
The equation $z_0 + z_1 = z_2$ with each variable squarefull was studied in \cite{BVV12}. A conjecture for the leading constant was put forward in \cite[Sec.~2.4]{BVV12} as an infinite sum of Euler products, which follows from writing the collection of Campana points as an infinite disjoint union of thin sets given by images of rational points from diagonal conics. This conjecture was shown to be incompatible with the conjecture in \cite[Section~3.3]{PSTVA19} by Shute \cite[Thm.~1.2]{Shute21b}.  We now provide evidence that this is actually compatible with Conjecture~\ref{conj:leading_constant}.

In what follows we want to focus on explaining the new phenomenon, namely the sum over the Campana Brauer group, and do not want to get distracted by technical details. In particular we only provide a sketch of a proof by ignoring archimedean and $2$-adic considerations. We also consider instead the equation
\begin{equation}
	Z: \qquad z_0 + z_1 + z_2 =0 \label{eqn:symmteric}
\end{equation}
as it leads to more symmetric formulae. We take the three points $P_i: z_i = 0$  with multiplicity~$2$. The height function we use is $H(z) = \prod_v \max\{ |z_0|_v, |z_1|_v, |z_0 + z_1|_v\}$. An interesting feature of this example is that the sum \eqref{eq:conj_B_0} is genuinely infinite, unlike the case of wonderful compactifications and toric varieties. For this reason it looks difficult to interpret as the measure of some Brauer--Manin set with respect to infinitely many Brauer group elements, hence why we have left \eqref{eq:conj_B_0} as simply a sum. This also presumably reflects the difficulty of proving an asymptotic formula in this case. We work over $\Q$.

\subsubsection{Campana Brauer group}
We first calculate the Campana Brauer group in our setting. We have $D_\epsilon = (1/2)P_0 + (1/2)P_1 + (1/2)P_2$.

\begin{lemma} \label{lem:d_i}
	The map
	\begin{align*}\{(d_0,d_1,d_2) \in \Z^3 : \mu^2(d_0d_1d_2) = 1, \sign(d_0d_1d_2) = 1\}
	&\to \Br((Z,D_\epsilon), L) /\Br \Q \\
	(d_0,d_1,d_2) &\mapsto (d_0,z_1/z_2) + (d_1,z_0/z_2) + (d_2,z_0/z_1)
	\end{align*}
	is a bijection. In particular $\Br((Z,D_\epsilon), L) /\Br \Q
	\cong \H^1(\Q, \Z/2\Z \times \Z/2\Z)$. 
\end{lemma}
\begin{proof}
	First it is clearly well-defined. For injectivity, it suffices to show that for $(d_0,d_1,d_2)$
	if 
	$$(d_0,z_1/z_2) + (d_1,z_0/z_2) + (d_2,z_0/z_1) \in \Br \Q$$
	then $d_0 = d_1 = d_2 = 1$. However the residue of this element along $P_i$
	is $d_jd_k$ where $\{i,j,k\} = \{0,1,2\}$. As $d_j$ and $d_k$ are coprime and squarefree,
	if this residue is trivial then we must have $|d_j| = |d_k| = 1$. In which case
	the system of equations 
	$$d_0d_1 = d_1d_2 = d_0d_2 = d_0d_1d_2 = 1$$
	is easily seen to imply that $d_0 = d_1 = d_2 = 1$, as required.
	
	For surjectivity, let $b$ lie in the Campana Brauer group. The residue at each $P_i$
	is two torsion. Denote the corresponding elements by $c_i \in \Q^{\times}/\Q^{\times 2}$.
	The Faddeev exact sequence \cite[Thm.~1.5.2]{Brauer}
	implies that $c_0c_1c_2$ is a square. In particular without loss of generality we may assume
	that $c_2 = c_0c_1/\gcd(c_0,c_1)$. In which case one checks that
	$$b + (\sign(c_1c_2)\gcd(c_1,c_2),z_1/z_2) + (\sign(c_0c_2)\gcd(c_0,c_2),z_0/z_2) + (\sign(c_0c_1)\gcd(c_0,c_1),z_0/z_1)$$
	has trivial residues, thus is constant.
\end{proof}

\subsubsection{Local densities}

We next calculate the Brauer group integrals from \eqref{def:Brauer_integral}. For this we require the following (in the statement $(d,t)_p$ denotes the Hilbert symbol over $\Q_p$).

\begin{lemma}  \label{lem:Hilbert_symbol_integral}
	Let $p$ be an odd prime, $n \in \Z$ and $d \in \Z_p$ with $v_p(d) \leq 1$. We have
	$$\int_{\substack{t \in \Q_p \\ v_p(t) = n}} (d,t)_p \mathrm{d} t\\
	=
	\begin{cases}
	0, \quad & p \mid d, \\
	\frac{(1 - 1/p)}{p^n}\left(\frac{d}{p}\right)^n, \quad & p \nmid d.	
	\end{cases}
	$$
\end{lemma}
\begin{proof}
	First assume that $p \mid d$. Take $u \in \Z_p^\times$ such that 
	$(d,u)_p = -1$, i.e.~$u$ is a quadratic non-residue modulo $p$.
	Making the change of variables $t \mapsto tu$ gives
	$$\int_{\substack{t \in \Q_p \\ v_p(t) = n}} (d,t)_p \mathrm{d} t
	= \int_{\substack{t \in \Q_p \\ v_p(t) = n}} (d,tu)_p |u|_p \mathrm{d} t
	= (d,u)_p \int_{\substack{t \in \Q_p \\ v_p(t) = n}} (d,t)_p \mathrm{d} t$$
	therefore the integral is $0$. Now assume that $p \nmid d$. Then 
	$(d,t)_p = \left(\frac{d}{p}\right)^{v_p(t)}$. Using that the region of integration has measure
	$p^{-n} - p^{-n-1}$ completes the proof.
\end{proof}

We will also require the following for a Legendre symbol sum.

\begin{lemma} \label{lem:quadratic_legendre_sum}
	Let $p$ be an odd prime. Then
	$$\sum_{a \in \F_p} \left( \frac{a(1 +a)}{p} \right) = -1.$$
\end{lemma}
\begin{proof}
	One easily checks that 
	$$\#\{(a,c) \in \F_p^2 : a^2 + a = c^2\} = 
	\sum_{a \in \F_p}\left(1 +  \left( \frac{a(1 +a)}{p} \right) \right).$$
	However this is the number of $\F_p$-points on a smooth affine conic with two rational points
	at infinity, which is thus $p-1$. The result follows.
\end{proof}

We now calculate the Brauer integrals from \eqref{def:Brauer_integral} at odd primes.

\begin{lemma} \label{lem:Brauer_integral}
	Let $b = (d_0,z_1/z_2) + (d_1,z_0/z_2) + (d_2,z_0/z_1)$ as in Lemma \ref{lem:d_i}.
	Let $p$ be an odd prime. Then
	\begin{align*}
	&\int_{Z^{\circ}(\mathbb{Q}_p)_\epsilon} e^{2 \pi i \inv_p b(z_p)} H_p(D_\epsilon, z_p) 
	\mathrm{d}\tau_{Z,p} \\
	&=
	\begin{cases}
		\left(\left(\frac{-d_0d_1}{p}\right) + \left(\frac{-d_0d_2}{p}\right)
		+  \left(\frac{-d_1d_2}{p}\right)\right) p^{-3/2},\quad &p \mid d_0d_1d_2, \\
		 1 + p^{-1} + 
		\left(\left(\frac{d_0d_1}{p}\right) + \left(\frac{d_0d_2}{p}\right)+ 
		\left(\frac{d_1d_2}{p}\right)\right)p^{-3/2},
		\quad &p \nmid d_0d_1d_2.
	\end{cases}
	\end{align*}
\end{lemma}
\begin{proof}
	The first step of our proof is the same as in the proof of 
	\cite[Lem.~3.1]{Shute21b}. We use the parametrisation
	$t \mapsto (t : 1 : -1 - t)$. With respect to this, the $3$ marked points are $
	t = 0, -1$ and $\infty$. As in \cite[(3.10)]{Shute21b} we have that the integral equals
	\begin{equation} \label{eqn:Shute_integral}
	\int_{\substack{ t \in \Q_p \\ v_p(t), v_p(1+t) \neq \pm 1}}
	\frac{(d_0, -(1 + t))_p\cdot (d_1,-t(1+t))_p\cdot (d_2,t)_p \mathrm{d} t}
	{|t(1+t)|^{1/2}_p \max\{|t|_p,1,|1+t|_p\}^{1/2}}.
	\end{equation}
	
	\noindent \textbf{Avoiding the marked points:}
	We first consider the case where $v_p(t) = v_p(1 + t) = 0$, so the reduction modulo $p$ does 
	not meet the marked points. Here the integral \eqref{eqn:Shute_integral} becomes
	\begin{align*}
		& \sum_{\substack{ a \in \mathbb{F}_p \\ a \neq 0,-1}}
		\int_{\substack{ t \in \Z_p \\ t \equiv a \bmod p}}
		(d_0, -(1 + t))_p\cdot (d_1,-t(1+t))_p\cdot (d_2,t)_p \mathrm{d} t \\
		& = p^{-1}\sum_{\substack{ a \in \mathbb{F}_p \\ a \neq 0,-1}}
		\left(\frac{-(1+a)}{p^{v_p(d_0)}}\right)
		\left(\frac{-a(1+a)}{p^{v_p(d_1)}}\right)		
		\left(\frac{a}{p^{v_p(d_2)}}\right)
	\end{align*}
	where the symbol is the Legendre symbol.
	If $p \nmid d_0d_1d_2$ this equals $1 - 2/p$.

	If $p \mid d_0$ then we obtain
	$$p^{-1}\sum_{\substack{ a \in \mathbb{F}_p \\ a \neq 0,-1}}
	\left(\frac{-(1+a)}{p}\right)
	= p^{-1}\sum_{\substack{ b \in \mathbb{F}_p \\ b \neq -1}}
	\left(\frac{b}{p}\right)  = -p^{-1}\left(\frac{-1}{p}\right).$$
	
	If $p \mid d_1$ then we obtain
	$$p^{-1}\sum_{\substack{ a \in \mathbb{F}_p \\ a \neq 0,-1}}
	\left(\frac{-a(1+a)}{p}\right) = 
	-p^{-1}\left(\frac{-1}{p}\right)$$	
	by Lemma \ref{lem:quadratic_legendre_sum}.

	If $p \mid d_2$ then we obtain
	$$p^{-1}\sum_{\substack{ a \in \mathbb{F}_p \\ a \neq 0,-1}}
	\left(\frac{a}{p}\right) = -p^{-1}\left(\frac{-1}{p}\right).$$	
	We next focus on the case where we are $p$-adically close to one
	of the marked points.
	
	\noindent \textbf{Near to $0$:} 
	Firstly assume that $v_p(t) \geq 2$ so that we are close to $0$.
	Here $|1 + t|_p = \max\{|t|_p,1|\} = 1$ and $(d_i,-(1+t))_p = (d_i,-1)_p$.
	Hence \eqref{eqn:Shute_integral} becomes
	\begin{equation} \label{eqn:tau_0}
	\int_{\substack{ t \in \Q_p \\ v_p(t) \geq 2}}
	\frac{(d_0, -(1 + t))_p\cdot (d_1,-t(1+t))_p\cdot (d_2,t)_p}{|t|^{1/2}_p} \mathrm{d} t
	 = (d_0d_1,-1)_p\int_{\substack{ t \in \Q_p \\ v_p(t) \geq 2}}
	\frac{(d_1d_2,t)_p}{|t|^{1/2}_p} \mathrm{d} t.
	\end{equation}
	By Lemma \ref{lem:Hilbert_symbol_integral} this is $0$ for $p \mid d_1d_2$.
	If $p \nmid d_1d_2$ then $(d_1,-1)_p = 1$, hence we obtain that 
	\eqref{eqn:tau_0} equals $(d_0,-1)_p$ times
	$$\sum_{j = 2}^\infty \left(\frac{d_1d_2}{p}\right)^j(1 - p^{-1})p^{-j/2}
	= (1 - p^{-1}) \frac{p^{-1}}{ 1 - \left(\frac{d_1d_2}{p}\right) p^{-1/2} }
	= p^{-1} + \left(\frac{d_1d_2}{p}\right)p^{-3/2}.$$
	
	\noindent \textbf{Near to $\infty$:} 
	Next when we are $p$-adically close to $\infty$, i.e.~$v_p(t) \leq -2$. Here we have
	$$\int_{\substack{ t \in \Q_p \\ v_p(t) \leq -2}}
	\frac{(d_0, -(1 + t))_p\cdot (d_1,-t(1+t))_p\cdot (d_2,t)_p}{|t|^{3/2}_p} \mathrm{d} t.$$	
	We make the change
	of variables $u=1/t$ to obtain
	$$\int_{\substack{ u \in \Q_p \\ v_p(u) \geq 2}}
	\frac{(d_0, -u(1 + u))_p\cdot (d_1,-(u+1))_p\cdot (d_2,u)_p}{|u|^{1/2}_p} \mathrm{d} u
	 = (d_0d_1,-1)_p\int_{\substack{ u \in \Q_p \\ v_p(u) \geq 2}}
	\frac{(d_0d_2, u)_p}{|u|^{1/2}_p} \mathrm{d} u$$
	which, as in \eqref{eqn:tau_0}, is $0$ for $p \mid d_0d_2$ and 
	$(d_1,-1)_p(p^{-1} + \left(\frac{d_0d_2}{p}\right)p^{-3/2})$
	otherwise.
	
	\noindent \textbf{Near to $-1$:} 
	Finally close to $-1$, we have the integral
	$$\int_{\substack{ t \in \Q_p \\ v_p(t +1) \geq 2}}
	\frac{(d_0, -(1 + t))_p\cdot (d_1,-t(1+t))_p\cdot (d_2,t)_p}{|1+t|^{1/2}_p} \mathrm{d} t.$$	
	We make the change of variables $u=-(1+t)$ to obtain
	$$\int_{\substack{ u \in \Q_p \\ v_p(u) \geq 2}}
	\frac{(d_0, u)_p\cdot (d_1,-u(1+u))_p\cdot (d_2,-(1+u))_p}{|u|^{1/2}_p} \mathrm{d} u.$$
	As in \eqref{eqn:tau_0}, this equals $0$ for $p \mid d_0d_1$ and 
	$(d_2,-1)_p(p^{-1} + \left(\frac{d_0d_1}{p}\right)p^{-3/2})$
	otherwise.
	Combining all cases together gives the statement of the lemma.
\end{proof}

The convergence factors comes from $\zeta(s)$. Therefore, up to archimedean and $2$-adic contributions, we deduce that the Brauer group sum in \eqref{eq:conj_B_0} is given by
\begin{align} \label{conj:sum_three_squareful}
	&\sum_{d_0,d_1,d_2}\mu^2(d_0d_1d_2)
	\prod_{p \mid d_0}\frac{(1-p^{-1})\left(\frac{-d_0d_1}{p}\right)}{p^{3/2}} 
	\prod_{p \mid d_1}\frac{(1-p^{-1})\left(\frac{-d_0d_2}{p}\right)}{p^{3/2}} 
	\prod_{p \mid d_2}\frac{(1-p^{-1})\left(\frac{-d_1d_2}{p}\right)}{p^{3/2}}  \nonumber \\
	\times &\prod_{p \nmid d_0d_1d_2}(1-p^{-1})\left( 1 + \frac{1}{p} + \frac{\left(\frac{d_0d_1}{p}\right) + \left(\frac{d_0d_2}{p}\right) + \left(\frac{d_1d_2}{p}\right)}{p^{3/2}}\right).
\end{align}
We emphasise that this is absolutely convergent, with the absolute convergence coming from the Brauer integrals  at the bad primes.

\subsubsection{Compatibility with Conjecture \ref{conj:leading_constant}}
We now consider the conjecture from \cite{BVV12}, following Shute's paper \cite{Shute21b}. From \cite[Conj.~1.1, (4.12)]{Shute21b} we obtain the prediction
\begin{align*}
\frac{3}{\pi} \sum_{\mathbf{y} \in \Z^3}\frac{\mu^2(y_0 y_1 y_2) \gamma(y_0y_1y_2)}{(y_0y_1y_2)^{3/2}} \sigma_{2,\y} \varrho(\y)
\end{align*}
where
$$\gamma(n) = \prod_{\substack{p \mid n \\ p > 2}} \left(1 + \frac{1}{p}\right)^{-1}$$
and 
$$\varrho(\y) = \prod_{\substack{p \mid y_0 \\ p > 2}}\left( 1 + \left(\frac{-y_1y_2}{p}\right)\right) \prod_{\substack{p \mid y_1 \\ p > 2}}\left( 1 + \left(\frac{-y_0y_2}{p}\right)\right) \prod_{\substack{p \mid y_2 \\ p > 2}}\left( 1 + \left(\frac{-y_0y_1}{p}\right)\right).$$
(The factor $3$ comes from \cite[Lem.~4.1]{Shute21b} and we have different signs to Shute as we are using the more symmetric version \eqref{eqn:symmteric} of the equation.)
The term $\sigma_{2,\y}$ is a $2$-adic condition which can be found in \cite[(4.11)]{Shute21b}. We shall ignore this term as already explained. For $n,m \in \Z$ odd, squarefree, and coprime, we use the relation
$$\prod_{p \mid n} \left(1 + \left(\frac{m}{p}\right)\right) = \sum_{d \mid n}\left(\frac{m}{d}\right),$$
where the symbol is the Kronecker symbol. Applying this and changing the order of summation shows that the leading constant is given by
\begin{align*}
	\frac{3}{\pi}\sum_{\mathbf{d}}\sum_{\substack{\mathbf{y} \\ d_i \mid y_i}}\frac{\mu^2(y_0 y_1 y_2) \gamma(y_0y_1y_2)}{(y_0y_1y_2)^{3/2}} \left(\frac{-y_1y_2}{d_0}\right) \left(\frac{-y_0y_2}{d_1}\right) \left(\frac{-y_0y_1}{d_2}\right).
\end{align*}
We make the change of variables $y_i \mapsto d_iy_i$ and rearrange symbols to obtain
\begin{align*}
	&\frac{3}{\pi}\sum_{\mathbf{d}}\frac{\mu^2(d_0d_1d_2)\gamma(d_0d_1d_2)}{ (d_0d_1d_2)^{3/2}} \left(\frac{-d_1d_2}{d_0}\right) \left(\frac{-d_0d_2}{d_1}\right) \left(\frac{-d_0d_1}{d_2}\right) \\
	&\times \sum_{\substack{\mathbf{y}}}\frac{\mu^2(d_0d_1d_2y_0 y_1 y_2) \gamma(y_0y_1y_2)}{(y_0y_1y_2)^{3/2}}  \left(\frac{y_1y_2}{d_0}\right) \left(\frac{y_0y_2}{d_1}\right) \left(\frac{y_0y_1}{d_2}\right).
\end{align*}
For the outer sum we have
\begin{align}
\begin{split}
& \frac{\gamma(d_0d_1d_2)}{ (d_0d_1d_2)^{3/2}}\left(\frac{-d_1d_2}{d_0}\right) \left(\frac{-d_0d_2}{d_1}\right) \left(\frac{-d_0d_1}{d_2}\right) \\
&= \prod_{p \mid d_0d_1d_2} \frac{1}{1 - p^{-2}}
	\prod_{p \mid d_0}\frac{(1-p^{-1})\left(\frac{-d_0d_1}{p}\right)}{p^{3/2}} 
	\prod_{p \mid d_1}\frac{(1-p^{-1})\left(\frac{-d_0d_2}{p}\right)}{p^{3/2}} 
	\prod_{p \mid d_2}\frac{(1-p^{-1})\left(\frac{-d_1d_2}{p}\right)}{p^{3/2}}
\end{split}	
\end{align}
which we shall show is in accordance with \eqref{conj:sum_three_squareful} once taking into account a factor of $\zeta(2)$.
For the sum over $\mathbf{y}$, quadratic reciprocity allows us to write $\left(\frac{y_1y_2}{d_0}\right) \left(\frac{y_0y_2}{d_1}\right) \left(\frac{y_0y_1}{d_2}\right) = \left(\frac{d_1d_2}{y_0}\right) \left(\frac{d_0d_2}{y_1}\right) \left(\frac{d_0d_1}{y_2}\right)$ up to possible $2$-adic factors, which we shall ignore. Expanding out into an Euler product therefore yields
$$\prod_{p \nmid d_0d_1d_2}\left( 1 + \frac{\left(\frac{d_0d_1}{p}\right) + \left(\frac{d_0d_2}{p}\right) + \left(\frac{d_1d_2}{p}\right)}{(1 + p^{-1})p^{3/2}}\right).$$
Multiplying this by $\prod_{p \nmid d_0d_1d_2}(1-p^{-2})$ gives the required Euler factors in \eqref{conj:sum_three_squareful}. This multiplies the expression by $\zeta(2) = \pi^2/6$. This gives a factor of $\pi$ on the numerator, which should be interpreted as a real density similarly to \cite[(3.11)]{Shute21b}.

\subsection*{Conflict of interest}
On behalf of all authors, the corresponding author states that there is no conflict of interest. 

\subsection*{Data availability statement}
There is no associated data.

\bibliographystyle{alpha}
\bibliography{myfile2}

\end{document}